\newcommand{\Eqref}[1]{Eq.\,\eqref{#1}}
\newtheorem{theorem}{Theorem}[section]
\newtheorem{definition}[theorem]{Definition}
\newtheorem{example}[theorem]{Example}
\newtheorem{lemma}[theorem]{Lemma}
\newtheorem{assumption}[theorem]{Assumption}
\newtheorem{notation}[theorem]{Notation}
\newtheorem{remark}[theorem]{Remark}
\newtheorem{comment}[theorem]{Remark}
\newenvironment{proof}[1][Proof]{\noindent\textbf{#1.} }{\ \rule{0.5em}{0.5em}}
\newcommand{\E}{{\rm \bf E}}
\renewcommand{\P}{{\rm \bf P}}
\newcommand{\len}{{\rm length}}
\newcommand{\length}{{\rm length}}
\newcommand{\prob}{{\rm \bf P}}
\newcommand{\val}{{\rm val}}
\newcommand{\supp}{{\rm supp}}
\newcommand{\dN}{{\mathbb N}}
\newcommand{\dR}{{\mathbb R}}
\newcommand{\calC}{{\cal C}}
\newcommand{\ep}{\varepsilon}
\newcommand{\black}{\color{black}}
\newcommand{\numbercellonga}[1]
{
\begin{picture}(40,20)(0,0)
\put(0,0){\framebox(40,20)}
\put(20,10){\makebox(0,0){#1}}
\end{picture}
}
\newcounter{figurecounter}
\begin{document}

\title{Equilibrium in Two-Player Stochastic Games\\ with Shift-Invariant Payoffs%
\thanks{
We thank Nicolas Vieille for useful discussions.
This work has been partly supported by COST Action CA16228 European Network for Game Theory. 
Solan acknowledges the support of the Israel Science Foundation, grant \#217/17.}}

\author{J\'{a}nos Flesch\footnote{Department of Quantitative Economics, 
Maastricht University, P.O.Box 616, 6200 MD, The Netherlands. E-mail: j.flesch@maastrichtuniversity.nl.}
\and Eilon Solan\footnote{School of Mathematical Sciences, Tel-Aviv University, Tel-Aviv, Israel, 6997800, E-mail: eilons@tauex.tau.ac.il.}}

\maketitle

\begin{abstract}
We show that every two-player stochastic game with finite state and action sets and bounded, Borel-measurable, and shift-invariant payoffs, 
admits an $\ep$-equilibrium for all $\ep>0$.
\end{abstract}

\noindent\textbf{MSC 2020:} 91A15, 91A05.

\noindent
\textbf{Keywords:} Stochastic game, equilibrium, shift-invariant payoff.

\section{Introduction}\label{sec-intro}

Stochastic games form a classical and central area of dynamic games. In each stage of a stochastic game, the play is in a given state, and each player chooses an action from her set of  available actions. The actions chosen by the players, together with the current state, determine a probability distribution according to which the new state for the next stage is chosen. The payoff of each player is a function of the infinite sequence consisting of all states visited and all actions chosen during the play of the game.

It is a long-standing open problem whether each stochastic game with a finite number of players, finite state and action sets, and bounded  Borel-measurable payoff functions admits an $\ep$-equilibrium for all $\ep>0$. While an affirmative answer has been given in the context of zero-sum games by the celebrated result of Martin \cite{martin1998determinacy} and its extension due to Maitra and Sudderth \cite{maitra1998finitely}, for the non-zero-sum case only partial answers are available. 

Traditionally, stochastic games were considered in the presence of stage payoffs,
where players evaluate the infinite stream of stage payoffs in a certain way,
e.g., the discounted, $T$-stage, or long-term average evaluations.
In his seminal paper, Shapley \cite{shapley1953stochastic} showed that zero-sum discounted stochastic games admit a stationary 0-equilibrium. Fink \cite{fink1964equilibrium} and Takahashi \cite{takahashi1964equilibrium} extended this result, and proved that a stationary 0-equilibrium exists in all multiplayer discounted stochastic games. 

With respect to the long-term average evaluation, 
Mertens and Neyman \cite{mertens1981stochastic} proved that in the zero-sum case, an $\ep$-equilibrium exists for all $\ep>0$. The $\ep$-equilibrium strategies are complex and the recommended actions may depend on the entire history. 
Vieille \cite{vieille2000one,vieille2000two} proved that all two-player stochastic games with long-term average evaluation also admit an $\ep$-equilibrium, for all $\ep>0$, which is one of the foremost results for the existence of $\ep$-equilibrium in stochastic games. In fact, Mertens and Neyman, as well as Vieille, 
showed a stronger statement: the constructed strategy pairs are not only an $\ep$-equilibrium on the infinite horizon, but also on all finite but sufficiently long horizons 
when each player maximizes her average payoff. 
For games with more than two players and the long-term average payoff, the existence of $\ep$-equilibrium is only known in special cases, see, e.g., Solan \cite{solan1999three}, Thuijsman and Raghavan \cite{thuijsman1997perfect}, and Flesch, Schoenmakers, Vrieze \cite{flesch2008stochastic}. For an overview we refer to Solan and Vieille \cite{solan2015stochastic}, Ja\'skiewicz and Nowak \cite{jaskiewicz2018non}, and Levy and Solan \cite{levy2020stochastic}. 

There are various other payoff functions that play a prominent role in the literature. For instance, the limsup and the liminf payoffs (e.g., Maitra and Sudderth \cite{maitra1993borel}), where the payoff is equal to the limit superior or limit inferior of the stream of stage payoffs, respectively. Another important group of payoffs are the ones used in the computer science literature, including reachability, safety, B\"{u}chi, co-B\"{u}chi, parity, Streett, and M\"{u}ller payoffs 
(e.g., Gr\"adel and Ummels \cite{gradel2008solution}, Chatterjee and Henzinger \cite{chatterjee2012survey}, Bruy\`{e}re \cite{Bruyere21}). Also here, the existence of $\ep$-equilibrium has only been established in special cases, for example for two-player stochastic games with parity payoffs, cf.~Chatterjee \cite{chatterjee2005}.  

Many of the previously mentioned payoff functions share a common feature: the payoff is fully determined by the tail of the infinite play. 
This property is called shift-invariance (or prefix-independence); we refer to Section \ref{sect-model} for the formal definition. 
Indeed, the long-term average payoff, the limsup and liminf payoffs, the B\"{u}chi, co-B\"{u}chi, parity, Streett, and M\"{u}ller payoffs are all shift-invariant. One should note that, although a shift-invariant payoff is determined by the tail of the play, single stages remain to play a role, as moving to unfavorable states in the game can limit the possibilities of the players for the remaining part of the infinite play, and thereby having an effect on its tail. 
The discounted payoff, and also the reachability and safety payoffs, are not shift-invariant. 

The main result of the paper is the following existence result: All two-player stochastic games with finite state and action sets and bounded, Borel-measurable, and shift-invariant payoffs, admit an $\ep$-equilibrium for all $\ep>0$. 
This result is an extension of Vieille \cite{vieille2000one,vieille2000two},
who proved the result for the long-term average payoff.

The main line of the proof follows that of Vieille \cite{vieille2000one} and directly invokes the result of Vieille \cite{vieille2000two}. There are however major differences between our proof and that of Vieille. In both proofs, an important technique is to use related zero-sum games in which one of the players is maximizing her own payoff while her opponent is minimizing the same. In the proof of Vieille, the results of Mertens and Neyman \cite{mertens1981stochastic} are used to obtain good strategies in these zero-sum games. In our proof, where the payoff functions are more general, 
we rely on the result of Martin \cite{martin1998determinacy}
for zero-sum games with general payoff functions. 
As a particular consequence, the limit of $\delta$-optimal strategies in these zero-sum games, as $\delta$ vanishes, is a single stationary strategy in Vieille's proof, whereas we obtain an entire collection of mixed actions in each state in the limit. 
This difference makes several arguments in Vieille's proof inapplicable in our setup.

Another essential difference appears in the part of the construction where the players need to detect deviations of their opponents. In Vieille's proof, each player needs to check the action frequencies of her opponent to detect deviations. 
Since the players use almost stationary strategies, this can be done by using statistical tests based on the law of large numbers. In our proof, due to the complexity of the payoff functions, the players need to detect deviations from non-stationary strategies. This is a major complication, and this part of the proof requires new ideas and techniques. In particular, we make use of inner-regularity of probability measures and L\'{e}vy's zero-one law for the detection of deviations. We will provide a more detailed account of the main ideas and steps of our proof in Section~\ref{sec-2}.

We remark that tail-measurability of the payoffs, which is slightly weaker than shift-invariance (see also Section \ref{sec-discuss}), was recently used in two related papers for 
repeated games, i.e., stochastic games with only one state. In Ashkenazi-Golan, Flesch, Predtetchinski, and Solan \cite{ashkenazigolan2021equilibria}, it is shown that every repeated game with countably many players, finite action sets, and bounded, Borel-measurable, and tail-measurable payoffs, admits an $\ep$-equilibrium for all $\ep>0$. 
%
In Ashkenazi-Golan, Flesch, Predtetchinski, and Solan \cite{ashkenazigolan2021regularity}, the regularity properties of the minmax and maxmin values are investigated in multiplayer repeated games, and they are applied to show the existence of $\ep$-equilibria in different classes of repeated games. In particular, it is shown that every repeated game with finitely many players, finite action sets and bounded, upper semi-analytic, and tail-measurable payoffs, admits an $\ep$-equilibrium for all $\ep>0$. \medskip

The paper is structured as follows. In Section~\ref{sec-2}, we present the model and the main result. In Section~\ref{sec-proof}, we provide the proof. 
Additional discussions appear in Section \ref{sec-discuss}.

\section{Model and Main Result}\label{sec-2}

\subsection{The game}\label{sect-model}

Let $\dN=\{1,2,\ldots\}$. For a nonempty finite set $X$, let $\Delta(X)$ denote the set of probability distributions on $X$.

\begin{definition}[stochastic game]
\label{def:stochastic:game}
A two-player \emph{stochastic game}\footnotemark\,is a tuple\\ $\Gamma = (I, S, (A_i)_{i\in I}, p, (f_i)_{i\in I})$,
where
\begin{itemize}
\item   $I = \{1,2\}$ is the set of players.
\item   $S$ is a nonempty finite set of states.
\item   $A_i$ is a nonempty and finite set of actions of player $i$, for each $i \in I$. Let $A=A_1\times A_2$ denote the set of action pairs.
\item   $p : S\times A \to \Delta(S)$ is the transition function. For states $s,s'\in S$ and action pair $a\in A$, we denote by $p(s'\mid s,a)$ the probability of state $s'$ under $p(s,a)$. Let $\mathscr{R}$ denote the set of runs, i.e., all sequences $(s^1,a^1,s^2,a^2,\ldots)\in (S\times A)^\infty$ such that $p(s^{t+1}\mid s^t,a^t)>0$ for each $t\in\dN$.
\item   $f_i : \mathscr{R} \to \dR$ is a payoff function for player~$i$, for each $i \in I$.
\end{itemize}
\end{definition}
\footnotetext{Since the payoff function is not derived from stage payoffs, this model is also called a \emph{multistage stochastic game}.}

The game is played in stages in $\dN$. 
The play starts in a given initial state $s^1$.
In each stage $t \in \dN$ the play is in some state $s^t\in S$. Each player $i \in I$ selects an action $a_i^t \in A_i$, simultaneously with the other player. This induces an action pair $a^t=(a_i^t)_{i\in I}$, which is observed by both players. Then, the state $s^{t+1}$ for stage $t+1$ is drawn from the distribution $p(\cdot\mid s^t,a^t)$ and is observed by both players.  \medskip

\begin{comment}[state independent actions] \rm
In our model, the action sets $A_1$ and $A_2$ are independent of the state. 
This assumption is used to simplify the exposition, and all the statements and proofs in the paper can be extended to stochastic games in which the action sets are finite yet depend on the state.
\end{comment}

\subsection{Histories} 

\begin{definition}[history]
A \emph{history} in stage $t\in\dN$ is a sequence $(s^1,a^1,\ldots,s^{t-1},a^{t-1},s^t)\in (S\times A)^{t-1}\times S$ such that $p(s^{k+1}\mid s^k,a^k)>0$ for each $k=1,\ldots,t-1$. The set of histories in stage $t$ is denoted by $H^t$ and the set of histories is denoted by $H = \bigcup_{t=1}^\infty H^t$.
The current stage (or length) of a history $h\in H^t$ is denoted  by $\len(h)=t$ and the final state of $h$ is denoted by $s_h$. 
\end{definition}

\begin{comment}[on positive transitions]\rm
According to the definition, a finite sequence $(s^1,a^1,\ldots,s^{t-1},a^{t-1},s^t)\in (S\times A)^{t-1}\times S$ where 
$p(s^{k+1}\mid s^k,a^k)=0$ for some $k=1,\ldots,t-1$ is not a history.
Similarly, an infinite sequence 
$(s^1,a^1,s^2,a^2,\ldots)\in (S\times A)^\infty$ such that $p(s^{t+1}\mid s^t,a^t)=0$ for some $t\in\dN$ is not a run,
and the payoff functions are not defined over such infinite sequences.
We selected this definition for convenience only. For example, under this definition, we do not need to define strategies when play leaves an absorbing state; a state is absorbing if the play remains in this state with probability 1 regardless the action pair. 
\end{comment}

\begin{notation}[$h\preceq h'$, $hh'$]\rm 
For two histories $h,h'\in H$, we write $h\preceq h'$ if $h'$ extends $h$ (with possibly $h=h'$), and we write $h\prec h'$ if $h\preceq h'$ and $h\neq h'$. If the final state $s_h$ of $h$ coincides with the first state of $h'$, then we write $hh'$ for the concatenation of $h$ with $h'$. Similarly, for a history $h\in H$ and a run $r\in\mathscr{R}$, we write $h\prec r$ if $r$ extends $h$, and if the final state $s_h$ of $h$ coincides with the first state of $r$, then we write $hr$ for the concatenation of $h$ with $r$.
\end{notation}

\begin{definition}[history remaining in a set of states]
For a nonempty set $C\subseteq S$ of states, we say that a history $h$ \emph{remains in $C$} if all states along $h$ are in $C$.
\end{definition}

\begin{definition}[subgame]
Each history induces a subgame of $\Gamma$. Given $h\in H$, the subgame that starts at $h$ is the game $\Gamma^h = (I,S,(A_i)_{i \in I},p, (f_{i,h})_{i \in I})$ having $s_h$ as the initial state, where $f_{i,h}(r) := f_i(hr)$ for each run $r\in\mathscr{R}$ that starts in state $s_h$.\medskip
\end{definition}

\subsection{Strategies}

\begin{notation}[$\mathcal{B} (\mathscr{R})$]\rm 
We endow the set $S$ of states and the set $A$ of action pairs with the discrete topology and the set $(S\times A)^\infty$ with the product topology. Thus, the open sets of $(S\times A)^\infty$ are the unions of cylinder sets, i.e., sets of the form $\{r \in (S\times A)^\infty \colon h \prec r\}$, where $h\in H$.
The set $\mathscr{R}$ of runs is a closed subset of $(S\times A)^\infty$, which we endow with the subspace topology. We denote by $\mathcal{B} (\mathscr{R})$ the corresponding Borel sigma-algebra on $\mathscr{R}$. 
\end{notation}

A payoff function $f_i$ is called \emph{Borel-measurable} if it is measurable w.r.t.~$\mathcal{B} (\mathscr{R})$.

\begin{definition}[mixed action]
A \emph{mixed action} for player $i\in I$ in state $s\in S$ is a probability distribution $x_i(s)$ on $A_i$. The set of mixed actions for player $i$ in state $s$ is thus $\Delta(A_i)$.  
The probability that $x_i(s)$ places on action $a_i\in A_i$ is denoted by $x_i(s,a_i)$. 
The \emph{support} of the mixed action $x_i(s)$ is $\supp(x_i(s)):=\{a_i\in A_i\colon x_i(s,a_i)>0\}$. The support of a pair of mixed actions $x(s)=(x_1(s),x_2(s))$ in state $s$ is $\supp(x(s)):=\supp(x_1(s))\times \supp(x_2(s))\subseteq A$. 
\end{definition}

\begin{notation}[$p(s'\mid s,x(s))$]\rm 
For a pair of mixed actions $x(s)=(x_1(s),x_2(s))$, we denote the probability of moving from state $s$ to state $s'$ under $x(s)$ by
\[p(s'\mid s,x(s))\,:=\,\sum_{(a_1,a_2)\in A} x_1(s,a_1)\cdot x_2(s,a_2)\cdot p(s'\mid s,a_1,a_2).\]
\end{notation}

\begin{definition}[strategy]
A (behavior) \emph{strategy} of player~$i$ is a function $\sigma_i : H \to \Delta(A_i)$. We denote by $\sigma_i(a_i \mid h)$ the probability on the action $a_i\in A_i$
under $\sigma_i(h)$. The interpretation of $\sigma_i$ is that if history $h$ arises, then $\sigma_i$ recommends to select an action according to the mixed action  $\sigma_i(h)$. We denote by $\Sigma_i$ the set of strategies of player~$i$.
\end{definition}

\begin{definition}[stationary strategy]
A strategy $\sigma_i$ of player $i$ is called \emph{stationary}, if the mixed actions prescribed by $\sigma_i$ only depend on the current state. Thus, a stationary strategy of player $i$ can be identified with an element of $\prod_{s\in S}\Delta(A_i)$.
\end{definition}

\begin{definition}[continuation strategy]
The \emph{continuation of a strategy $\sigma_i$} in the subgame that starts at a history $h\in H$ is denoted by $\sigma_{i,h}$; 
this is a function that maps each history $h'$ having $s_h$ as its first state to the mixed action $\sigma_{i,h}(h')=\sigma_i(hh')\in\Delta(A_i)$.\medskip
\end{definition}

\subsection{Expected payoffs and equilibrium}

\begin{notation}[$\P_{s,\sigma_1,\sigma_2}$, $\E_{s,\sigma_1,\sigma_2}$]\rm 
By Kolmogorov's extension theorem, each  strategy pair $(\sigma_1,\sigma_2)$ together with an initial state $s$ induces a unique probability measure $\P_{s,\sigma_1,\sigma_2}$ on $(\mathscr{R},\mathcal{B} (\mathscr{R}))$. The corresponding expectation operator is denoted by $\E_{s,\sigma_1,\sigma_2}$.
\end{notation}

If player $i$'s payoff function $f_i$ is bounded and Borel-measurable, then
player $i$'s expected payoff under $(\sigma_1,\sigma_2)$ is
\[\E_{s,\sigma_1,\sigma_2}[f_i]\,=\,\int_{r\in\mathscr{R}}f_i(r)\ \P_{s,\sigma_1,\sigma_2}(dr).\]

\begin{notation}[$\P_{h,\sigma_1,\sigma_2}$, $\E_{h,\sigma_1,\sigma_2}$]\rm 
In the subgame $\Gamma^h$, for any history $h\in H$, each strategy pair $(\sigma_1,\sigma_2)$ induces a unique probability measure $\P_{h,\sigma_1,\sigma_2}$ on $(\mathscr{R},\mathcal{B} (\mathscr{R}))$. The corresponding expectation operator is denoted by $\E_{h,\sigma_1,\sigma_2}$. 
\end{notation}

If player $i$'s payoff function $f_i$ is bounded and Borel-measurable, then
player $i$'s expected payoff under $(\sigma_1,\sigma_2)$ in $\Gamma^h$ is
\[\E_{h,\sigma_1,\sigma_2}[f_i]\,=\,\int_{r\in\mathscr{R}}f_{i,h}(r)\ \prob_{s_h,\sigma_{1,h},\sigma_{2,h}}(dr).\]

\begin{definition}[$\ep$-equilibrium]
Suppose that each player $i$'s payoff $f_i$ is bounded and Borel-measurable. Let $\ep\geq 0$. A strategy pair $(\sigma^*_1,\sigma^*_2)$ is called an \emph{$\ep$-equilibrium for the initial state $s\in S$}, if we have $\E_{s,\sigma^*_1,\sigma^*_2}[f_1] \,>\, \E_{s,\sigma_1,\sigma^*_2}[f_1] - \ep$ for each strategy $\sigma_1 \in \Sigma_1$ and $\E_{s,\sigma^*_1,\sigma^*_2}[f_2] \,>\, \E_{s,\sigma^*_1,\sigma_2}[f_2] - \ep$ for each strategy $\sigma_2 \in \Sigma_2$. A strategy pair $(\sigma^*_1,\sigma^*_2)$ is called an \emph{$\ep$-equilibrium} if it is an $\ep$-equilibrium for each initial state $s\in S$.
\end{definition}

\begin{definition}[shift-invariance]
A payoff function $f_i$ is called \emph{shift-invariant} (or prefix-independent), if 
\[f_i(s^1,a^1,s^2,a^2,s^3,a^3,\ldots)\,=\,f_i(s^2,a^2,s^3,a^3,\ldots)\] holds for every run $(s^1,a^1,s^2,a^2,s^3,a^3,\ldots) \in \mathscr{R}$. 
\end{definition}

Equivalently, $f_i$ is shift-invariant if whenever two runs have the form $hr$ and $h'r$, i.e., they only differ in the prefixes $h$ and $h'$, then $f_i(hr)=f_i(h'r)$. 
The set of shift-invariant functions is not included by, neither does it include,
the set of Borel-measurable functions, 
\color{black}
see Rosenthal \cite{rosenthal1975nonmeasurable} and Blackwell and Diaconis \cite{blackwell1996non}. 

Many evaluation functions in the literature of dynamic games are shift-invariant, such as the long-term average payoffs (e.g., Mertens and Neyman \cite{mertens1981stochastic,mertens1982stochastic}) and the limsup of stage payoffs (e.g., Maitra and Sudderth \cite{maitra1993borel}).
Various classical winning conditions in the computer science literature, such as the B\"{u}chi, co-B\"{u}chi, parity, Streett, and Müller  (e.g., Horn and Gimbert \cite{Horn2008optimal}, Chatterjee and Henzinger \cite{chatterjee2012survey}, Bruy\`{e}re \cite{Bruyere21}) are also shift-invariant.
The discounted payoff (e.g., Shapley \cite{shapley1953stochastic}) is not shift-invariant.\medskip

\subsection{Maxmin value} 

\begin{definition}[maxmin value]
Suppose that player $i$'s payoff function $f_i$ is bounded and Borel-measurable. The \emph{maxmin value} of player $i$ for initial state $s\in S$ is the quantity
\begin{equation}
\label{def-maxmin}
v_i(s):=\sup_{\sigma_i\in\Sigma_i}\inf_{\sigma_j\in\Sigma_j}\E_{s,\sigma_i,\sigma_j}[f_i]\,=\,\inf_{\sigma_j\in\Sigma_j}\sup_{\sigma_i\in\Sigma_i}\E_{s,\sigma_i,\sigma_j}[f_i],
\end{equation}
where player $j$ denotes player $i$'s opponent.
\end{definition}

The second equality in \Eqref{def-maxmin} follows by Martin \cite{martin1998determinacy} or by Maitra and Sudderth \cite{maitra1998finitely}. In other words, $v_i(s)$ is the value of the zero-sum game that is derived from the stochastic game by assuming that player $i$ is maximizing her payoff given by $f_i$ whereas her opponent is minimizing player $i$'s payoff. The maxmin value $v_i(h)$ of player $i$ in the subgame that starts at history $h$ is defined similarly.

If the payoff function $f_i$ is also shift-invariant, then the payoff given by $f_i$ is independent of the initial segment of the play, and hence the maxmin value of player $i$ is equal across subgames that start in the same state. Formally, the following statement holds.

\begin{lemma}\label{const-surr} Suppose that player $i$'s payoff function $f_i$ is bounded, Borel-measurable, and shift-invariant. Then, for each state $s\in S$ and each history $h\in H$ with $s_h=s$, it holds that $v_i(h)=v_i(s)$.
\end{lemma}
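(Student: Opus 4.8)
The plan is to show that the zero-sum game defining $v_i(h)$ is literally the same game as the one defining $v_i(s)$. Recall that $v_i(h)$ is the value of the maxmin problem in the subgame $\Gamma^h$, in which player $i$ maximizes $f_{i,h}$ while her opponent minimizes it, whereas $v_i(s)$ is the value of the corresponding problem in $\Gamma$ with initial state $s$. The subgame $\Gamma^h$ shares with $\Gamma$ the player set, the state set, the action sets, and the transition function $p$, and by assumption its initial state is $s_h=s$. Thus the only possible discrepancy between the two games lies in the payoff function, and the whole lemma reduces to comparing $f_{i,h}$ with $f_i$ on the runs that can actually occur from $s$.

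First I would use shift-invariance to prove that $f_{i,h}(r)=f_i(r)$ for every run $r\in\mathscr{R}$ whose first state is $s_h$. Writing $h=(s^1,a^1,\ldots,s^{t-1},a^{t-1},s^t)$ with $s^t=s_h$ and applying the defining identity of shift-invariance $t-1=\len(h)-1$ times, each application strips one leading state-action pair, so that $f_i(hr)=f_i(r)$; since $f_{i,h}(r):=f_i(hr)$ by definition, this gives the claim. As $r\mapsto hr$ is continuous and $f_i$ is bounded and Borel-measurable, $f_{i,h}$ is again bounded and Borel-measurable, so $v_i(h)$ is well defined and the maxmin/minmax equality of \Eqref{def-maxmin} applies to it as well.

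It then remains to observe that the two maxmin values are computed over the same data. The expected payoff $\E_{s,\sigma_i,\sigma_j}[f_i]$ depends only on the restriction of $\sigma_i,\sigma_j$ to histories whose first state is $s$, because a play started in $s$ visits only such histories; moreover, for any $h$ with $s_h=s$, the measure $\P_{h,\sigma_i,\sigma_j}$ equals $\P_{s,\sigma_{i,h},\sigma_{j,h}}$, since $\Gamma^h$ evolves under the same transition function from the same initial state. Because the continuation map $\sigma_i\mapsto\sigma_{i,h}$ carries $\Sigma_i$ onto the full strategy set of the subgame (any subgame strategy extends to a strategy of $\Gamma$), the supremum over $\sigma_i$ and infimum over $\sigma_j$ in the definition of $v_i(h)$ range over exactly the same behavior rules as in the definition of $v_i(s)$, and by the first step they are evaluated against the same objective $\int_{\mathscr{R}}f_i(r)\,\P_{s,\cdot,\cdot}(dr)$. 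Hence the two sup-inf expressions coincide and $v_i(h)=v_i(s)$.

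I expect the substantive content to lie entirely in the first step, the reduction $f_{i,h}=f_i$ on runs from $s_h$ via iterated shift-invariance, while the remaining obstacle is purely bookkeeping: one must make the identification of strategies of $\Gamma^h$ with strategies of $\Gamma$ (through the continuation and restriction maps) precise enough that the suprema and infima are seen to run over matching sets and against a matching integrand. This last part is routine and uses no property of $f_i$ beyond shift-invariance, boundedness, and Borel-measurability.
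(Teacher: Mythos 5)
Your proof is correct and follows exactly the route the paper intends: the paper states this lemma without a formal proof, justifying it only by the remark that a shift-invariant payoff is independent of the initial segment of the play, and your argument (iterating shift-invariance $\len(h)-1$ times to get $f_{i,h}=f_i$ on runs starting at $s_h$, then matching the strategy sets and induced measures via the continuation map) is precisely the formalization of that remark. No gaps.
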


\subsection{Main Result}
\label{section:main:result}

The main result of the paper is the following existence result.

\begin{theorem}
\label{theorem:1}
Every two-player stochastic game $\Gamma$ with finite state and action sets and bounded,  Borel-meausurable, and shift-invariant payoffs, admits an $\ep$-equilibrium for all $\ep > 0$.
\end{theorem}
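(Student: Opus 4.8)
The plan is to follow the architecture of Vieille's two-part solution of the long-term average case and to adapt each ingredient to the shift-invariant setting, invoking Vieille \cite{vieille2000two} as a black box for the hardest combinatorial core. The individually rational levels are fixed by the maxmin values $v_i(s)$, which by Martin \cite{martin1998determinacy} equal the values of the auxiliary zero-sum games in which player $i$ maximizes $f_i$ and player $j$ minimizes it, and which by the Lemma are constant across all subgames with the same current state. The target is a profile $(\sigma_1^*,\sigma_2^*)$ whose realized play, for each initial state, yields each player at least $v_i$ up to $\ep$, and in which any profitable unilateral deviation is detected and then punished by reverting to a minmax strategy. Martin's theorem supplies, for every $\delta>0$, a strategy of player $j$ that holds player $i$'s expected payoff to at most $v_i+\delta$; combined with the constancy of the maxmin values this yields a restartable punishment threat.

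First I would carry out the structural decomposition of Vieille \cite{vieille2000one}, analyzing the game through its communicating structure: within a strongly connected set of states the players can steer the play almost freely, so the relevant question becomes which ``exits'' from such a set are used and with what probabilities. The dichotomy to establish is that either there is a jointly acceptable exit (or recommended behavior) producing an equilibrium payoff dominating the threat point $(v_1,v_2)$ directly, or the game reduces to a recursive game in the sense of Vieille \cite{vieille2000two}, whose $\ep$-equilibrium existence I would then invoke. A genuine point of departure is that the limit of $\delta$-optimal strategies in the auxiliary zero-sum games is here no longer a single stationary strategy but an entire collection of mixed actions in each state; wherever Vieille's argument exploits a limiting stationary optimal strategy, I must instead work with this collection and re-derive the corresponding steering and payoff estimates.

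The main obstacle, and the part requiring genuinely new arguments, is deviation detection. Because the payoffs are only shift-invariant and Borel rather than averages of stage payoffs, the recommended strategies are non-stationary, so a deviation cannot be caught by checking empirical action frequencies against a stationary target via the law of large numbers. Instead I would fix the prescribed continuation strategy $\sigma_j^*$ of the potential deviator and consider, under $\P_{h,\sigma_1^*,\sigma_2^*}$, the event that player $j$ conforms. Using inner-regularity of the induced measures on the run space $\mathscr{R}$, the relevant Borel events are approximated from within by compact, finite-cylinder sets, yielding finite-horizon statistical tests with a prescribed, summable error budget. Lévy's zero-one law is then used to show that the posterior probability assigned by the non-deviating player to ``the opponent is conforming'' converges to $1$ along conforming play and is driven down after a genuine deviation, so that deviations are detected with total error below $\ep$; upon detection the detector switches to the $\delta$-optimal minmax strategy from Martin \cite{martin1998determinacy}.

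Finally I would assemble the pieces and verify the equilibrium inequalities. On the equilibrium path each player's expected payoff is at least $v_i-\ep$ by construction of the recommended play together with the constancy of the maxmin values. Off path, any deviation that improves a player's payoff by more than $\ep$ is, by the detection mechanism, caught with probability close to $1$ and followed by punishment to at most $v_i+\delta$, so it is not profitable, while deviations too small to detect are by definition not worth more than $\ep$. Choosing $\delta$ and the per-stage detection errors small relative to $\ep$ closes the estimate and yields an $\ep$-equilibrium for every initial state. I expect the bulk of the technical work to lie in making the detection errors genuinely summable while keeping the punishment threats credible across the non-stationary, shift-invariant continuation payoffs, and in transplanting Vieille's reduction to the setting where optimal behavior in the auxiliary games is only a collection of mixed actions rather than a single stationary strategy.
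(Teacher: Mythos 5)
Your architecture matches the paper's: reduce to Vieille's recursive-game result via the communicating-set/exit decomposition, use Martin's determinacy for the auxiliary zero-sum games, and accept that the limit of $\delta$-optimal strategies is a whole set of mixed actions per state rather than a single stationary strategy. The genuine gap is in the deviation-detection step, which is the heart of what is new here. You claim that any deviation improving a player's payoff by more than $\ep$ is ``caught with probability close to $1$.'' For non-stationary behavior strategies this is false and cannot be repaired: a deviation that always plays actions inside the support of the recommended mixed actions generates only histories that have positive probability under conforming play, so no test can detect it with probability bounded away from zero, and there is no law of large numbers to fall back on. What actually works --- and what the paper does --- is not to detect deviations but to \emph{certify the payoff}: by L\'evy's zero-one law one finds a history $h^*$ after which, with probability at least $1-\mu$, the play stays in the communicating set $C$ forever and the payoff equals a fixed vector $g$ with $g_i\geq v_i(C)$; by inner regularity one replaces this event by a \emph{closed} subset $R$ of it of measure at least $1-\mu$; since $\mathscr{R}\setminus R$ is open, hence a union of cylinders, leaving $R$ is observed in finite time and triggers punishment, while any run (deviant or not) that stays in $R$ yields payoff exactly $g$, so undetectable deviations are automatically unprofitable. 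Your sketch names the right tools (inner regularity, L\'evy) but assigns them the wrong job.

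Second, nothing in your proposal handles the deviation in which the opponent gradually inflates the probability of leaving $C$ toward a favorable exit state; punishing only after the exit is realized is too late, since the continuation maxmin value at the reached state may exceed $v_j(C)$. The paper needs a separate test for this: a cumulative ``fictitious exit probability'' $\zeta_{i,\sigma_i}(h)$ that sums, stage by stage, the probability that the play could have left $C$ given the opponent's realized actions and one's own strategy. Its expectation at any stopping time equals the actual exit probability, it stays small with high probability under conforming play by Markov's inequality, and it can grow only by a small controlled amount per stage, so punishing as soon as it crosses a threshold caps the probability with which a deviator can engineer an exit before being punished. Without this device (or an equivalent one) the equilibrium inequalities in your final assembly do not close. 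A smaller but real omission: your dichotomy ``jointly acceptable exit or reduction to a recursive game'' elides the precise property of the alternatives (a $Y$-communicating set blocked to both players with constant maxmin values is either jointly controlled or already supports an $\ep$-equilibrium) and the construction of the families of sets and modified transition functions that make the reduction to Vieille's recursive game actually go through.
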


As described in the introduction, 
Theorem~\ref{theorem:1} was proven by Vieille \cite{vieille2000one,vieille2000two} for the case when $f_i$ is the long-term average payoff.
In fact, Vieille \cite{vieille2000one,vieille2000two} proved a stronger statement: a uniform $\ep$-equilibrium exists,
namely, a strategy profile that is an $\ep$-equilibrium in every $T$-stage game,
provided $T$ is sufficiently large,
and also in every discounted game, provided the players are sufficiently patient.

We will now give a detailed account of the main ideas and steps of our proof.
As the description reveals, 
our proof follows that of Vieille \cite{vieille2000one} and directly uses the main result of Vieille \cite{vieille2000two}. Yet, there are major differences between the two proofs, as already indicated in Section \ref{sec-intro}. 
To describe Vieille's proof, we need the concepts of absorbing and nonabsorbing states.

\begin{definition}[absorbing state, nonabsorbing state]
A state $s\in S$ is called \emph{absorbing} if $p(s\mid s,a)=1$ for each action pair $a\in A$. 
Otherwise, the state is called \emph{nonabsorbing}. 
We denote by $S^*$ the set of absorbing states, and by $S_0=S\setminus S^*$ the set of nonabsorbing states.
\end{definition}

\medskip
\noindent\textsc{Comparison with Vieille's proof.}  
The proof of Vieille has two major steps:
\begin{itemize}
    \item[(V.1)]
    In \cite{vieille2000two}, Vieille proves that every stochastic game with the long-term average payoff that satisfies the following properties admits an $\ep$-equilibrium, for every $\ep > 0$:
    (a) in nonabsorbing states, the stage payoffs of both players are 0;
    (b) in absorbing states, the stage payoff of player~2 is positive;
    and (c) player 2 has a stationary strategy such that regardless of the initial state and player 1's strategy, the play eventually reaches an absorbing state with probability 1.
    \item[(V.2)]
    In \cite{vieille2000one}, Vieille proves that (V.1) implies that every stochastic game with the long-term average payoff admits an $\ep$-equilibrium, for every $\ep > 0$.
\end{itemize}

To prove Theorem~\ref{theorem:1} for shift-invariant payoffs, we rely on (V.1), 
and prove that (V.1) also implies the statement of  Theorem~\ref{theorem:1}. In other words, we prove an implication that is stronger than the one in (V.2).

The heart of the proof of (V.2) is the following \emph{property of the alternatives},
see Proposition~32 in \cite{vieille2000one}.
Let $\overline x_i$ be an accumulation point of discounted stationary optimal strategies of player~$i$, as the players become more patient.
Let $C \subseteq S_0$ be a set of nonabsorbing states with the following properties:
\begin{enumerate}
\item[{(1)}] Communication: The set $C$ communicates under $(\overline x_1,\overline x_2)$;
namely, for every two distinct states $s,s' \in C$ there is a stationary strategy pair
$(y_1,y_2)$ that (i) ensures that the play never leaves $C$,
(ii) ensures that the play reaches $s'$ with probability 1, when the initial state is $s$,
and (iii) satisfies that the support of $y_i(s)$ contains the support of $\overline x_i(s)$,
for each player $i\in\{1,2\}$ and each state $s \in C$.
\item[{(2)}] The maxmin value of each player $i\in\{1,2\}$ is constant over $C$, denoted by $v_i(C)$, i.e., $v_i(s)=v_i(C)$ for each state $s\in C$. 
\item[{(3)}] Blocking to player 2: When player~1 plays the stationary strategy $\overline x_1$,
for each state $s \in C$ and each action $a_2 \in A_2$ 
that leads the play outside $C$ with positive probability,
the expected continuation maxmin value of player~2 is strictly smaller than $v_2(C)$.
\item[{(4)}] Blocking to player 1: The property analogous to (3) holds, with the roles of the two players exchanged.
\end{enumerate}
The property of the alternatives states that in this case, at least one of the following alternatives holds:
\begin{itemize}
\item[(A.1)] For every initial state in $C$, the game admits an $\ep$-equilibrium for every $\ep > 0$.
\item[(A.2)] The set $C$ is jointly controlled: Roughly speaking, there is a probability distribution~$q$ on the set of states outside $C$, i.e., $q\in \Delta(S\setminus C)$, and for every $\lambda>0$ there is a strategy pair $(\sigma_1,\sigma_2)$, such that whenever the initial state is in $C$, the following properties hold under $(\sigma_1,\sigma_2)$: 
\begin{itemize}
\item[(i)] The play leaves $C$ with probability 1 and the probability that first state the play reaches outside $C$ is $s$ is equal to $q(s)$, 
for every $s \not\in C$, and 
\item[(ii)] For each $i\in\{1,2\}$, the expectation of player $i$'s maxmin value in the first state the play reaches outside $C$ is at least $v_i(C)$ and also at least the payoff that player $i$ could gain by deviating from $\sigma_i$ before the play leaves $C$, up to $\lambda$. 
\end{itemize}
\end{itemize}
Conditions (1)--(4) and property (A.2) depend on the transition function and on the maxmin values of the players,
but are independent of the actual payoff function.
Hence, to prove the analog of the main step of \cite{vieille2000one} for shift-invariant payoff functions,
it is sufficient to prove that if conditions~(1)--(4) hold and property~(A.2) does not hold, then property~(A.1) holds.
The main challenge in our proof is to prove this implication,
and it arises since when payoffs are shift-invariant, 
there is no analogue to the discounted payoff.

To prove the property of the alternatives,
Vieille \cite{vieille2000one} uses a celebrated result of Mertens and Neyman \cite{mertens1981stochastic},
stating that when the payoff function is the long-term average payoff, each player $i$ has a $\delta$-optimal strategy that is a perturbation of $\overline x_i$.
When the payoff function is shift-invariant,
a $\delta$-optimal strategy exists by Martin \cite{martin1998determinacy} or Maitra and Sudderth \cite{maitra1998finitely},
yet this strategy need not be a perturbation of a stationary strategy.
In particular, the proof of Vieille \cite{vieille2000one} for the property of the alternatives does not carry over to shift-invariant payoffs.
Below we will expand on the way we overcome this issue, see Remark~\ref{remark:explanation}.
We now turn to explain the structure of the proof.

\bigskip

\noindent\textsc{Structure and sketch of the proof.} 
We start with some simplifying assumptions (Assumptions \ref{As1} and \ref{As2} below), 
which we can make without loss of generality. 
Most importantly, we assume that each initial state for which an $\ep$-equilibrium exists for all $\ep>0$ is absorbing. 
Our proof is by contradiction: We assume that there is an initial state for which there is no $\ep$-equilibrium for small $\ep>0$. That is, we assume that 
there is at least one nonabsorbing state.

Section \ref{sec-prelnot} is devoted to preliminary notations. In Section \ref{sec-comm}, we define the notion of a communicating set of states, which formalizes the idea that the players can reach any state from any other state inside this set, without leaving it. 
In Section \ref{sec-exits}, we categorize mixed action pairs under which a set of states is left with a positive probability, into so-called unilateral exits and joint exits: 
the mixed action pair is a unilateral exit if leaving the set of states is due to the action of one of the players, and it is a joint exit if the set of states is left when both players play certain actions. 
In Section \ref{sec-contr}, we examine when such an exit is satisfactory,
that is, it yields a high expected continuation maxmin value. 
If an exit is satisfactory to both players, then we call the set of states controlled; either unilaterally controlled by one of the players or jointly controlled. If a player has no satisfactory unilateral exit, then the set of states is called blocked to this player. 

In Section \ref{section:martin}, we define the concept of
subgame-perfect $\delta$-maxmin strategies,
namely,
a strategy of a player that guarantee that her expected payoff in each subgame is at least her maxmin value up to an error-term $\delta>0$. 
We invoke results from Martin \cite{martin1998determinacy} to construct for each player a specific subgame-perfect $\delta$-maxmin strategy that is suited for our purposes.
These subgame-perfect $\delta$-maxmin strategies will be analogous to the strategies constructed by Mertens and Neyman \cite{mertens1981stochastic} and used in \cite{vieille2000one}.

In Section \ref{sec-contr-sgperf}, we relate these strategies to controllability of a set of states. This section contains one of the key steps of our proof, Lemma \ref{lemma:alternatives}, which is the analog of the property of the alternatives mentioned earlier. 
We discuss the proof of this lemma in detail in Section \ref{sec-contr-sgperf}.

The rest of the construction follows the arguments in Vieille \cite{vieille2000one}.
In Sections \ref{sec-firstfam}, \ref{sec-secondfam}, and \ref{sec-thirdfam}, we define three families of sets of states.
In the $\ep$-equilibrium that we will construct,
states in these sets will be dummy: 
the play will visit them finitely many times so they will not affect the overall payoff.
In Section \ref{sec-auxgame} we define an auxiliary recursive game.
By Vieille~\cite{vieille2000two} this auxiliary game admits an $\ep$-equilibrium for every $\ep > 0$, and we will show that this $\ep$-equilibrium can be adapted to an $\ep$-equilibrium in the original game.
Since this is in contradiction with our initial assumption, the proof of Theorem \ref{theorem:1} will be complete. 

\section{Proof}\label{sec-proof}

In this section we prove Theorem \ref{theorem:1}. To simplify the exposition of the proof, we will make two assumptions without loss of generality.

\begin{assumption}\label{As1} The payoff functions satisfy the following properties: $f_1 \leq -1$, $f_2 \geq 1$, and $f_1$ and $f_2$ have a finite range.
\end{assumption}

We argue why this assumption can be made without loss of generality. 
The requirement that $f_1 \leq -1$ and $f_2 \geq 1$ can be achieved by adding a constant to the payoffs, 
which do not affect the set of $\ep$-equilibria.
The requirement that $f_1$ and $f_2$ have a finite range
can be achieved by rounding $f_i(r)$ to the largest multiple of $\ep$ smaller than or equal to $f_i(r)$, for each run $r\in\mathscr{R}$.
This operation does not affect the shift-invariance of the function $f_i$,
and any $\ep$-equilibrium in the game with payoffs rounded down is a $3\ep$-equilibrium of the original game.

\begin{assumption}\label{As2} 
Let $s \in S_0$.
If for every $\ep > 0$ there exists an $\ep$-equilibrium for the initial state $s$, 
then $s$ is absorbing, i.e., $s\in S^*$. 
Moreover, all runs that reach state $s$ give the same payoff, denoted by $(\gamma_1^s,\gamma_2^s)$: for every run $r\in\mathscr{R}$ that reach state $s$ and for every player $i\in\{1,2\}$ we have $f_i(r)=\gamma_i^s$.
\end{assumption}

The idea behind this assumption is standard (e.g., Vieille \cite{vieille2000one}). We argue briefly why it can be made without loss of generality.

Take a two-player stochastic game $\Gamma$ satisfying the conditions of Theorem \ref{theorem:1}.  Fix an accumulation point $(\gamma_1^s,\gamma_2^s)$ of the sets of $\ep$-equilibrium payoffs, as $\ep\to 0$, when the initial state is $s$. 
We turn state $s$ into an absorbing state and, if the play visits state $s$, we let the payoff be $(\gamma_1^s,\gamma_2^s)$. 
More precisely, let $\Gamma'=(I,S,(A_i)_{i\in I},p',(f'_i)_{i\in I})$ be the stochastic game that is derived from $\Gamma$ as follows: (i) $p'(s\mid s,a)=1$ for all $a\in A$, and $p'(\cdot\mid w,a)=p(\cdot\mid w,a)$, for all $w\in S\setminus\{s\}$ and $a\in A$, 
(ii) for each player $i$, $f'_i(r)=\gamma_i^s$ if the run $r\in \mathscr{R}'$ in $\Gamma'$ visits state $s$, and otherwise $f'_i(r)=f_i(r)$. Then, (i) the game $\Gamma'$ satisfies the conditions of Theorem \ref{theorem:1}, and (ii) if $\Gamma'$ admits an $\ep$-equilibrium for some initial state $s$ for all $\ep>0$, 
then $\Gamma$ 
admits an $\ep$-equilibrium for the initial state $s$ for all $\ep>0$. 
To see (ii), notice that any $\ep$-equilibrium in $\Gamma'$ can be turned into an $(\ep+\delta)$-equilibrium in $\Gamma$, for any $\delta>0$. Indeed, once the play reaches state $s$, by using shift-invariance, we can replace the continuation strategies with a $\frac{\delta}{2}$-equilibrium
for the initial state $s$ with payoff $\delta$-close to $(\gamma_1^s,\gamma_2^s)$.
Consequently, for the proof of Theorem \ref{theorem:1}, we can make Assumption \ref{As2} without loss of generality.

\begin{comment}
If $s$ is an absorbing state, then by Ashkenazi-Golan, Flesch, Predtetchinski,
and Solan \cite{ashkenazigolan2021equilibria}, there exists an $\ep$-equilibrium for the initial state $s$, for every $\ep > 0$. 
Hence, by Assumption \ref{As2}, (i) a state $s$ is absorbing if and only there exists an $\ep$-equilibrium for the initial state $s$, for every $\ep > 0$, and (ii) once the run reaches an absorbing state, the payoffs do not depend on the continuation of the run.  
\end{comment}

We fix a stochastic game $\Gamma$ satisfying the conditions of Theorem \ref{theorem:1} and Assumptions \ref{As1} and \ref{As2} for the rest of the proof. As mentioned earlier, the proof of Theorem \ref{theorem:1} is by contradiction, 
and we make the following contrapositive assumption.

\begin{assumption}[by contradiction]\label{As3}
The set $S_0$ of nonabsorbing states is nonempty: 
For some initial state and some $\ep > 0$, the game $\Gamma$ does not admit an $\ep$-equilibrium.
\end{assumption}

\subsection{Preliminary Notations}\label{sec-prelnot}

We will denote a general player by $i$, and the other player by $j$. 

\begin{notation}[bound on payoffs]\rm 
Let $M \in \dR$ be a bound on the payoffs:
\[ M := \max_{i\in\{1,2\}}\max_{r\in\mathscr{R}} |f_i(r)|.\]
\end{notation}

\begin{notation}[$L_\infty$-distance]\rm 
For two vectors $\xi,\xi' \in \dR^k$, where $k\in \dN$, we denote by $d(\xi,\xi')$ the $L_\infty$-distance between $\xi$ and $\xi'$:
\[d(\xi,\xi'):=\max_{\ell\in\{1,\ldots,k\}} |\xi(\ell)-\xi'(\ell)|.\] 
For a state $s\in S$, mixed action $x_i(s)\in\Delta(A_i)$ of player $i$, and a nonempty set of mixed actions $X_i(s)\subseteq \Delta(A_i)$ of player $i$, we denote the $L_\infty$-distance of $x_i(s)$ from $X_i(s)$ by
\[d(x_i(s),X_i(s))\,:=\,\inf_{y_i(s)\in X_i(s)} d(x_i(s),y_i(s)).\]
Given, in addition, another nonempty set of mixed actions $Y_i(s)\subseteq \Delta(A_i)$, we define
\[d(X_i(s),Y_i(s))\,:=\, \sup_{x_i(s)\in X_i(s)}d(x_i,Y_i(s)).\]
This should not be confused with the Hausdorff distance between sets.
\end{notation}

\begin{notation}[expectation given state and actions]\rm 
For every function $g : S \to \dR$, state $s \in S$, and mixed actions $x_1(s) \in \Delta(A_1)$ and $x_2(s) \in \Delta(A_2)$, we define
\begin{equation}\label{oneshot-g}
\E[g \mid s,x_1(s),x_2(s)] := \sum_{s' \in S} p(s' \mid s,x_1(s),x_2(s))\cdot g(s'),
\end{equation}
which is the expected value of $g$ in the state that is visited in one step from state $s$ if the players use the mixed actions $x_1(s)$ and $x_2(s)$. 
\end{notation}

In particular, $\E[v_i \mid s,x_1(s),x_2(s)]$ is the expected maxmin value of player $i$ in the next state if the players use the mixed actions $x_1(s)$ and $x_2(s)$ in state $s$. 
By the definition of the maxmin value,
for every state $s \in S$, player $i\in\{1,2\}$, and mixed action
$x_j(s) \in \Delta(A_j)$, there exists $a_i \in A_i$ such that
\begin{equation}
\label{equ:minmax}
\E[v_i \mid s,a_i,x_j(s)] \geq v_i(s).
\end{equation}
Indeed, suppose the opposite: for some state $s\in S$, player $i\in\{1,2\}$, and mixed action
$x_j(s) \in \Delta(A_j)$, we have $\delta:=v_i(s)-\max_{a_i\in A_i}\E[v_i \mid s,a_i,x_j(s)]>0$. Suppose that the game starts in state $s^1=s$ and player $j$ plays the following strategy $\sigma_j$: in stage 1, $\sigma_j$ uses the mixed action $x_j(s)$, and from stage 2 on $\sigma_j$ makes sure that player $i$'s payoff is at most $v_i(s^2)+\frac{\delta}{2}$
(recall that $s^2$ denotes the state in stage 2). 
Against $\sigma_j$, player $i$'s payoff is at most $\max_{a_i\in A_i}\E[v_i+\frac{\delta}{2} \mid s,a_i,x_j(s)]=\max_{a_i\in A_i}\E[v_i\mid s,a_i,x_j(s)]+\frac{\delta}{2}=v_i(s)-\frac{\delta}{2}$,
which contradicts the definition of $v_i(s)$, cf.~\Eqref{def-maxmin}.

\begin{notation}[expectation given history and actions]\rm 
For every function $D : H \to \dR$, history $h\in H$, and mixed actions $x_1(s_h) \in \Delta(A_1)$ and $x_2(s_h) \in \Delta(A_2)$, define
\begin{equation}\label{oneshot-D}
\E[D \mid h,x_1(s_h),x_2(s_h)] := \sum_{(s',a)\in S\times A} x_1(s_h,a_1)\cdot x_2(s_h,a_2)\cdot p(s' \mid s_h,a_1,a_2)\cdot D(h,a_1,a_2,s'),
\end{equation}
which is the expected value of $D$ at the history in the next stage if at history $h$ the players use the mixed actions $x_1(s_h)$ and $x_2(s_h)$.
\end{notation}

\begin{notation}[$G_i(d,h)$, $\val_i(D,h)$]\rm For a function $D : H \to \dR$, a player $i\in\{1,2\}$, and a history $h\in H$, let $G_i(D,h)$ denote the one-shot zero-sum game in which the players' action sets are $A_1$ and $A_2$, 
the payoff is given by \Eqref{oneshot-D},  
and player $i$'s goals is to maximize the payoff (and player $j$'s goal is to minimize it). 
Let $\val_i(D,h)$ denote the value of $G_i(D,h)$.
\end{notation}

\begin{notation}[exit time $\theta_C^{exit}$]\rm 
For a nonempty set $C\subseteq S_0$, we denote the exit time from $C$ by
\[ \theta_C^{exit} := \min\{ t \in \dN \colon s^t \not\in C\}, \]
with the convention that the minimum of the empty set is infinity.
\end{notation}

\subsection{Communicating Sets}
\label{sec-comm}

In this subsection, we define the notion of communicating sets. Intuitively, a communicating set is a set of states in which, without leaving the set, it is possible to move, possibly in a number of steps, from any state to any other state using mixed action pairs that lie in a pre-specified set of mixed-action pairs.

Let $\mathcal{X}$ denote the collection of all nonempty sets $X$ of the form $X = \prod_{s \in S_0} \prod_{i=1,2} X_i(s)$, where $X_i(s)\subseteq \Delta(A_i)$ for each state $s\in S_0$ and each player $i\in\{1,2\}$. 
Each element of $X$ can be seen as a stationary strategy pair in the game, as the actions in absorbing states do not matter. 
We denote $X(s) = X_1(s) \times X_2(s)$ for each $s \in S_0$ and $X_i=\prod_{s \in S_0} X_i(s)$ for each $i\in\{1,2\}$. 

\begin{definition}[perturbation] 
Let $X \in \mathcal{X}$. 
\begin{enumerate}
\item Consider a state $s\in S_0$ and a mixed action $y_i(s)\in \Delta(A_i)$ for player $i\in\{1,2\}$. We say that $y_i(s)$ is a \emph{perturbation of a mixed action $y_i'(s)\in\Delta(A_i)$} if $\supp(y'_i(s))\subseteq\supp(y_i(s))$. We say that $y_i(s)$ is a \emph{perturbation of $X_i(s)$} if $y_i(s)$ is a perturbation of some $x_i(s)\in X_i(s)$.
\item Consider a stationary strategy $y_i=(y_i(s))_{s\in S_0}$ of player $i\in\{1,2\}$. We say that $y_i$ is a \emph{perturbation of a stationary strategy $y'_i=(y'_i(s))_{s\in S_0}$} if $y_i(s)$ is a perturbation of $y'_i(s)$ for each state $s\in S_0$. We say that $y_i$ is a \emph{perturbation of $X_i$} if $y_i$ is a perturbation of some $x_i\in X_i$.
\item Perturbations for pairs of mixed actions and for stationary strategy pairs is defined similarly, by requiring the corresponding property for each player separately.
\end{enumerate}
\end{definition}

Note that if $y_i(s)$ is a perturbation of $X_i(s)$, then for every $\rho>0$ there is a perturbation $y^\rho_i(s)$ of $X_i(s)$ such that $\supp(y^\rho_i(s))=\supp(y_i(s))$ and $d(y^\rho_i(s),X_i(s))\leq \rho$. Indeed, let
$x_i(s)\in X_i(s)$ satisfy $\supp(x_i(s))\subseteq\supp(y_i(s))$, and for $\rho\in(0,1)$, take $y^\rho_i(s)=\rho \cdot y_i(s)+(1-\rho)\cdot x_i(s)$. A similar statement holds if $y$ is a perturbation of $X$.

\begin{definition}[communicating set]
\label{definition:communicating}
A nonempty set $C\subseteq S_0$ is \emph{$X$-communicating}, where $X \in \mathcal{X}$, if the following two conditions hold:
\begin{enumerate}
\item   Under mixed actions in $X$, the set $C$ cannot be left: $p(C \mid s,x(s)) = 1$ for every~$s \in C$ and every $x(s)\in X(s)$.
\item   For every $s,s' \in C$, there exists a perturbation $y$ of $X$ that allows moving from $s$ to $s'$ without leaving $C$: 
\begin{equation}
\label{eqcomm}
\prob_{s,y}\big(\theta_C^{exit} = \infty\text{ and }\exists t\in\mathbb{N}: s^t=s'\big) \,=\, 1.
\end{equation}
\end{enumerate}
\end{definition}

We mention equivalent alternatives for the conditions in Definition~\ref{definition:communicating}. Condition~1 of Definition~\ref{definition:communicating} could be equivalently stated as follows: For every stationary strategy pair $x\in X$ and every initial state $s\in C$ we have $\prob_{s,x}(\theta_C^{exit} < \infty) = 0$. Condition~2 of Definition~\ref{definition:communicating} could be equivalently stated as follows: For every $s,s' \in C$ and every $\rho > 0$, there exists a perturbation $y$ of $X$ such that \Eqref{eqcomm} holds, and in addition $d(y_i(s),X_i(s))\leq\rho$ for every state $s\in S_0$ and 
every player $i\in\{1,2\}$.

\subsection{Exits from Subsets of States}
\label{sec-exits}

In this subsection, we define two related notions of exit. The terminology of exit was first used by Solan \cite{solan1999three}, and it refers to a pair of actions or mixed actions in a state under which a set of states  is left with a positive probability. First we define unilateral exits, where an action of one of the players is used to leave the set of states under consideration. 

\begin{definition}[unilateral exit]
\label{def:unilateral:exit}
Let $C \subseteq S_0$ be a nonempty set, $X \in \mathcal{X}$, and $i \in \{1,2\}$.
A triplet $(s,a_i,x_j(s))\in C\times A_i\times X_j(s)$
is an \emph{$X$-unilateral exit} of player~$i$ from $C$ if
there exists $x_i(s) \in X_i(s)$ such that
$p(C \mid s,x_i(s),x_j(s)) = 1$ and
$p\bigl(C \mid s,a_i,x_j(s)\bigr) <1$.
\end{definition}

Thus, the set $C$ can be left through the pair $(a_i,x_j(s))$ at state $s$. It is also required that player $i$ has a mixed action $x_i(s)$ that keeps the play in $C$ against $x_j(s)$. 
Let $\mathscr{E}_i(C,X)$ denote the set of all $X$-unilateral exits of player~$i$ from the set $C$.
Note that the set $\mathscr{E}_i(C,X)$ is not necessarily compact.

Now we define joint exits, where the exit only takes place if both players use a certain action pair.

\begin{definition}[joint exit]
\label{def:joint:exit}
Let $C \subseteq S_0$ be a nonempty set and $X \in \mathcal{X}$.
A triplet $(s,a_1,a_2)\in C\times A_1\times A_2$
is an \emph{$X$-joint exit} from $C$ if
there exist $x_1(s) \in X_1(s)$ and $x_2(s) \in X_2(s)$ such that
\begin{itemize}
\item   $p(C \mid s,x_i(s),x_j(s)) = 1$,
\item   $p(C \mid s,a_i,x_j(s)) = 1$ for each $i\in\{1,2\}$,
\item   $p(C \mid s,a_i,a_j) < 1$.
\end{itemize}
\end{definition}

Let $\mathscr{E}_{12}(C,X)$ denote the set of all $X$-joint exits from the set $C$. Note that $\mathscr{E}_{12}(C,X)$ is a finite set, which might be empty.

The next lemma provides a sufficient condition under which a player has a best unilateral exit, 
which maximizes her continuation maxmin value among all her unilateral exits.

\begin{lemma}\label{maxexit}
Let $C \subseteq S_0$ be $X$-communicating for some $X \in \mathcal{X}$. Suppose that $X(s)$ is compact for each $s\in C$ and that there is an exit $(s,x_i(s),a_j)\in\mathscr{E}_j(C,X)$ such that
\begin{equation}
\label{atleastmaxv}
\E[v_j\mid s,x_i(s),a_j]\,\geq\,\max_{s'\in C}v_j(s').
\end{equation}
Then, there is an exit $(s^*,x^*_i(s^*),a^*_j)\in\mathscr{E}_j(C,X)$ that maximizes $\E[v_j\mid\cdot]$ among all exits in $\mathscr{E}_j(C,X)$: for each exit $(s',x'_i(s'),a'_j)\in\mathscr{E}_j(C,X)$
\[\E[v_j\mid s^*,x^*_i(s^*),a^*_j]\,\geq\,\E[v_j\mid s',x'_i(s'),a'_j].\]
\end{lemma}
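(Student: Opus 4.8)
The plan is to show that the supremum
\[
V^* \;:=\; \sup_{e \in \mathscr{E}_j(C,X)} \E[v_j \mid e]
\]
is attained, by extracting a convergent maximizing sequence and arguing that if its limit fails to be a genuine exit, then the hypothesized exit already attains $V^*$. Note first that $V^*$ is a well-defined real number: $\mathscr{E}_j(C,X)$ is nonempty by hypothesis, and the maxmin values $v_j(\cdot)$ are bounded by $M$, so the objective is bounded.

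First I would take a maximizing sequence $(s_n, x_i^n(s_n), a_j^n)_n$ in $\mathscr{E}_j(C,X)$ with $\E[v_j \mid s_n, x_i^n(s_n), a_j^n] \to V^*$. Since $C$ and $A_j$ are finite, after passing to a subsequence I may assume $s_n \equiv s$ and $a_j^n \equiv a_j$ are constant. Since $X(s)$ is compact, so is its component $X_i(s)$, and hence a further subsequence yields $x_i^n(s) \to x_i^*(s)$ with $x_i^*(s) \in X_i(s)$. Because $\E[v_j \mid s, \cdot, a_j]$ is affine, hence continuous, in the mixed action of player $i$, the limit satisfies $\E[v_j \mid s, x_i^*(s), a_j] = V^*$.

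The key step is to decide whether $(s, x_i^*(s), a_j)$ is itself an exit in $\mathscr{E}_j(C,X)$. The requirement $x_i^*(s) \in X_i(s)$ holds by the previous paragraph, and the requirement that player $j$ have a mixed action keeping the play in $C$ is automatic: since $C$ is $X$-communicating, Condition~1 of Definition~\ref{definition:communicating} gives $p(C \mid s, x_i^*(s), x_j(s)) = 1$ for every $x_j(s) \in X_j(s)$, and $X_j(s) \neq \emptyset$. Thus the only condition that can fail is the strict inequality $p(C \mid s, a_j, x_i^*(s)) < 1$; this is exactly the source of the possible non-compactness of $\mathscr{E}_j(C,X)$ and is the main obstacle. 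If $p(C \mid s, a_j, x_i^*(s)) < 1$, then $(s, x_i^*(s), a_j)$ is a genuine exit attaining $V^*$ and we are done.

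It remains to handle the boundary case $p(C \mid s, a_j, x_i^*(s)) = 1$, and this is where the hypothesis is used. If the play stays in $C$ with probability one, then $\E[v_j \mid s, x_i^*(s), a_j]$ is an average of the values $v_j(s')$ over states $s' \in C$, so $V^* = \E[v_j \mid s, x_i^*(s), a_j] \leq \max_{s' \in C} v_j(s')$. On the other hand, the hypothesized exit satisfying \eqref{atleastmaxv} gives $V^* \geq \max_{s' \in C} v_j(s')$. Hence $V^* = \max_{s' \in C} v_j(s')$, and the hypothesized exit itself attains the supremum. In either case the supremum is attained, which proves the lemma.
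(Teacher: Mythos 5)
Your proof is correct and follows essentially the same route as the paper's: extract a maximizing sequence, use finiteness of $C\times A_j$ and compactness of $X_i(s)$ to pass to a limit, and handle the boundary case $p(C\mid s,a_j,x_i^*(s))=1$ by observing that then $V^*\leq\max_{s'\in C}v_j(s')$, so hypothesis \eqref{atleastmaxv} makes the given exit itself a maximizer. The only (harmless) difference is that you note the existence of a $C$-preserving response for player $j$ follows directly from Condition~1 of Definition~\ref{definition:communicating}, whereas the paper verifies it by a separate compactness argument on $X_j(s^*)$.
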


\begin{proof}
Let
\begin{equation}
\label{maxexiteq}
\gamma\,:=\,\sup_{(s',x'_i(s'),a'_j)\in\mathscr{E}_j(C,X)}\E[v_j\mid s',x'_i(s'),a'_j].
\end{equation}
By \Eqref{atleastmaxv}, we have $\gamma\geq \max_{s'\in C}v_j(s')$. We argue that the maximum in \Eqref{maxexiteq} is attained at some exit. 

If $\gamma = \max_{s'\in C}v_j(s')$, then by \Eqref{atleastmaxv}, the maximum in \Eqref{maxexiteq} is attained at $(s,x_i(s),a_j)$.

So, assume that $\gamma > \max_{s'\in C}v_j(s')$. Let $((s^n,x^n_i(s^n),a^n_j))_{n \in \dN}$ be a sequence of exits in $\mathscr{E}_j(C,X)$
such that 
$\lim_{n \to \infty} \E[v_j\mid s^n,x^n_i(s^n),a^n_j] = \gamma$.
Let $(s^*,x^*_i(s^*),a^*_j)$ be an accumulation point of the sequence $((s^n,x^n_i(s^n),a^n_j))_{n \in \dN}$. 
Since $C$ and $A_2$ are finite, we can assume w.l.o.g.~that $s^n = s^*$ and $a_j^n=a_j^*$ for every $n \in \dN$, and 
since $X_i(s^*)$ is compact, we have $x^*_i(s^*) \in X_i(s^*)$. Consider the triplet $(s^*,x^*_i(s^*),a^*_j)$. By continuity we have
\begin{equation}\label{stars}
\E[v_j\mid s^*,x^*_i(s^*),a^*_j] \,=\, \lim_{n \to \infty} \E[v_j\mid s^*,x^n_i(s),a^*_j] \,=\, \gamma \,>\,\max_{s'\in C}v_j(s').
\end{equation}
It remains to show that $(s^*,x^*_i(s^*),a^*_j) \in \mathscr{E}_j(C,X)$. By \Eqref{stars}, we have $p(C \mid s^*,x^*_i(s^*),a^*_j) < 1$.
So we only need to check that there exists $x^*_j(s^*) \in X_j(s^*)$ such that 
\begin{equation}
    \label{equ:16}
    p(C \mid s^*,x^*_i(s^*),x^*_j(s^*)) = 1.
\end{equation}
For each $n \in \dN$ we have $(s^*,x^n_i(s^*),a^*_j) \in \mathscr{E}_j(C,X)$,
and therefore there is $x^n_j(s^*) \in X_j(s^*)$ such that $p(C \mid s^*,x^n_i(s^n),x^n_j(s))= 1$. As $X_j(s^*)$ is compact, the sequence $(x_j^n(s^*))_{n\in\dN}$ has an accumulation point $x_j^*(s^*)\in X_j^*(s^*)$. By continuity,  \Eqref{equ:16} follows.
\end{proof}

\subsection{Controllable and Blocked Sets}
\label{sec-contr}

In this subsection, we define the notions of controllable sets and blocked sets.

\begin{notation}[$H_i(C,X)$]\rm 
For every nonempty set $C \subseteq S_0$, every player $i \in \{1,2\}$, and every
$X\in \mathcal{X}$,
define
\begin{equation}\label{defH}
 H_i(C,X) := \sup_{(s,a_i,x_j(s))\in C\times A_i\times X_j(s)} \E[v_i \mid s,a_i,x_j(s)].
\end{equation}
\end{notation}

The quantity $H_i(C,X)$ is the maximal expected continuation maxmin value for player~$i$ that she can have when the current state is in $C$ and the other player is restricted to using mixed actions in $X_j(s)$, for $s\in C$. Note that $H_i(C,X)$ is independent of $X_i(s)$, for $s\in C$.
By \Eqref{equ:minmax} we have
\begin{equation}
\label{equ:91}
H_i(C,X) \,\geq\, \max_{s \in C} v_i(s).
\end{equation}

\begin{definition}[set controlled by a player]
\label{def:controlled}
Let $C \subseteq S_0$ be $X$-communicating for some $X \in \mathcal{X}$, and let $i \in \{1,2\}$.
The set $C$ is \emph{$X$-controlled by player~$i$} if
there exists an exit $(s,a_i,x_j(s))\in \mathscr{E}_i(C,X)$
such that
$\E[v_k \mid s,a_i,x_j(s)] \geq H_k(C,X)$ for each $k\in\{1,2\}$.
\end{definition}

The set $C$ is $X$-controlled by player~$i$ if she has an $X$-unilateral exit from $C$ that is beneficial to both players: 
this unilateral exit gives each player $k$ an expected continuation maxmin value that is at least $H_k(C,X)$, and therefore by \Eqref{equ:91}, at least player $k$'s maximal maxmin value in $C$. Note that if this $X$-unilateral exit is $(s,a_i,x_j(s))$, then by \Eqref{defH}, we have $\E[v_i \mid s,a_i,x_j(s)] = H_i(C,X)$.

\begin{definition}[jointly controlled set]
Let $C \subseteq S_0$ be $X$-communicating for some $X \in \mathcal{X}$,
and let $i \in \{1,2\}$.
The set $C$ is \emph{$X$-jointly controlled} if there exists a probability distribution $\mu$ over $\mathscr{E}_{12}(C,X)$
such that for each $k\in\{1,2\}$,
\begin{equation}
\label{eq:jointcont}
\sum_{(s,a_1,a_2) \in \mathscr{E}_{12}(C,X)} \mu(s,a_1,a_2)\cdot \E[v_k \mid s,a_1,a_2] \ \geq\ H_k(C,X).
\end{equation} 
\end{definition}

In words, the set $C$ is $X$-jointly controlled if there is a probability distribution over the $X$-joint exits from $C$ such that the expectation of each player $k$'s continuation maxmin value is at least $H_k(C,X)$.

\begin{definition}[set blocked to a player]
\label{defblocked}
Let $C \subseteq S_0$ be a nonempty set, $X\in \mathcal{X}$,
and $i \in \{1,2\}$. The set $C$ is \emph{$X$-blocked to player~$i$} if for each 
$X$-unilateral exit $(s,a_i,x_j(s)) \in \mathscr{E}_i(C,X)$ of player~$i$,
\begin{equation}
\label{equ:100}
\E[v_i \mid s,a_i,x_j(s)] \,<\, \max_{s'\in C}v_i(s').
\end{equation}
\end{definition}

The set $C$ is blocked to player~$i$ if she has no good unilateral exit: each unilateral exit of player $i$ yields to her an expected continuation maxmin value that is strictly below her best maxmin value in the set $C$.

\begin{lemma}
\label{remark:blocked}
Let $C \subseteq S_0$ be $X$-blocked to player~$i$, for some $X\in \mathcal{X}$ and
 $i \in \{1,2\}$. 
 Then,
 \begin{enumerate}
\item $C$ is not $X$-controlled by player~$i$.
\item Assume, in addition, that $C$ is closed under $X$, i.e., $p(C\mid s,x(s))=1$ for each $s\in C$ and $x(s)\in X(s)$. Then $H_i(C,X) = \max_{s \in C} v_i(s)$.
 \end{enumerate}   
\end{lemma}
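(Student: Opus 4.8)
The plan is to handle the two parts separately. Part 1 will follow directly by contradiction from the definitions, and part 2 will reduce, via \eqref{equ:91}, to a single inequality that I prove by a case split on whether a triplet is a unilateral exit.

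For part 1, I would suppose toward a contradiction that $C$ is $X$-controlled by player~$i$. By Definition~\ref{def:controlled} there is then an exit $(s,a_i,x_j(s))\in\mathscr{E}_i(C,X)$ with $\E[v_k\mid s,a_i,x_j(s)]\geq H_k(C,X)$ for each $k$. Taking $k=i$ and invoking \eqref{equ:91} gives $\E[v_i\mid s,a_i,x_j(s)]\geq H_i(C,X)\geq\max_{s'\in C}v_i(s')$. But this very triplet lies in $\mathscr{E}_i(C,X)$, so the blocking hypothesis \eqref{equ:100} forces $\E[v_i\mid s,a_i,x_j(s)]<\max_{s'\in C}v_i(s')$, which is the desired contradiction.

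For part 2, since \eqref{equ:91} already yields $H_i(C,X)\geq\max_{s\in C}v_i(s)$, it suffices to establish the reverse inequality, i.e.\ that $\E[v_i\mid s,a_i,x_j(s)]\leq\max_{s'\in C}v_i(s')$ for every triplet $(s,a_i,x_j(s))\in C\times A_i\times X_j(s)$ appearing in the supremum defining $H_i(C,X)$ in \eqref{defH}. I split into two cases. If the triplet is a unilateral exit, the blocking hypothesis gives the bound immediately (indeed strictly). If it is not a unilateral exit, I first use closedness of $C$ under $X$: for any $x_i(s)\in X_i(s)$ we have $p(C\mid s,x_i(s),x_j(s))=1$, so the first requirement in Definition~\ref{def:unilateral:exit} is automatically met; hence the triplet can fail to be an exit only because $p(C\mid s,a_i,x_j(s))=1$. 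In that case the next state lies in $C$ almost surely, so $\E[v_i\mid s,a_i,x_j(s))$ is a convex combination of the values $v_i(s')$ with $s'\in C$ and is therefore at most $\max_{s'\in C}v_i(s')$. Taking the supremum over all triplets yields $H_i(C,X)\leq\max_{s\in C}v_i(s)$, and combined with \eqref{equ:91} gives equality.

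The only step requiring genuine care is the non-exit case in part 2: one must verify that, under the closedness assumption, any triplet that is not a unilateral exit keeps the play inside $C$. This rests on the asymmetry built into Definition~\ref{def:unilateral:exit}, where being an exit demands both that some action of player~$i$ retains the play in $C$ and that the deviating action $a_i$ leaks out; closedness makes the former condition vacuous, so only the latter can be responsible for failure. Beyond this observation the argument is routine, and I do not anticipate any serious obstacle.
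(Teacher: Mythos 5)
Your proposal is correct and follows essentially the same route as the paper: Part 1 is the same one-line combination of \eqref{equ:100} and \eqref{equ:91} (merely phrased as a contradiction), and Part 2 is the same reduction via \eqref{equ:91} to bounding $\E[v_i\mid s,a_i,x_j(s)]$ for an arbitrary triplet, with your exit/non-exit case split coinciding, under closedness, with the paper's split on whether $p(C\mid s,a_i,x_j(s))$ equals $1$ or is less than $1$.
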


\begin{proof} Let $C \subseteq S_0$ be $X$-blocked to player~$i$.\smallskip

\noindent\textsc{Proof of 1.} For every $(s,a_i,x_j(s))\in\mathscr{E}_i(C,X)$, we have by Eqs.~\eqref{equ:100} and \eqref{equ:91}, 
 \[\E[v_i\mid s,a_i,x_j(s)]\,<\,\max_{s' \in C} v_i(s')\,\leq\,H_i(C,X).\]
Hence, $C$ is not $X$-controlled by player~$i$.\smallskip

\noindent\textsc{Proof of 2.} Assume, in addition, that $C$ is closed under $X$. By \Eqref{equ:91}, it suffices to show that $H_i(C,X) \,\leq\, \max_{s' \in C} v_i(s')$. To this end, we take an arbitrary $(s,a_i,x_j(s))\in C\times A_i\times X_j(s)$, and prove that
\begin{equation}
\label{toshoweqH}
\E[v_i\mid s,a_i,x_j(s)]) \,\leq\, \max_{s' \in C} v_i(s').
\end{equation} 
 If $p(C\mid s,a_i,x_j(s))=1$, then \Eqref{toshoweqH} holds by \Eqref{oneshot-g}. 
 So, suppose that $p(C\mid s,a_i,x_j(s))<1$. Since $C$ is closed under $X$, for any $x_i(s)\in X_i(s)$ we have $p(C\mid s,x_i(s),x_j(s))=1$. Hence, $(s,a_i,x_j(s))\in \mathscr{E}_i(C,X)$, and therefore \Eqref{equ:100} implies \Eqref{toshoweqH}. 
\end{proof}

\subsection{Subgame-Perfect $\delta$-Maxmin Strategies}
\label{section:martin}

A strategy for a player is called subgame-perfect $\delta$-maxmin if it ensures that this player's expected payoff is at least her maxmin value minus $\delta$ in all subgames.

\begin{definition}[subgame-perfect $\delta$-maxmin strategy]
Let $\delta > 0$ and $i \in \{1,2\}$.
A strategy $\sigma_i^\delta$ for player $i$ is \emph{subgame-perfect $\delta$-maxmin}
if for every history $h\in H$ and every strategy $\sigma_j$ of player $j$
\[ \E_{h,\sigma_i^\delta,\sigma_j}[f_i] \,\geq\, v_i(s_h) - \delta. \]
\end{definition}

Each player has a subgame-perfect $\delta$-maxmin strategy, for each $\delta>0$, by Mashiah-Yaakovi \cite{mashiah2015correlated} or by Flesch, Herings, Maes, and Predtetchinski \cite{flesch2021subgame}. Note that if $\sigma_i^\delta$ is subgame-perfect $\delta$-maxmin, then for every history $h\in H$ and every action $a_j\in A_j$ of player $j$,
\begin{equation}\label{deltamaxminvalue}
\E[v_i\mid s_h,\sigma_i^\delta(h),a_j] \,\geq\, v_i(s_h) - \delta.
\end{equation}
Indeed, if this inequality was not true, then $\sigma^\delta_i$ would not guarantee a payoff of at least $v_i(s_h)-\delta$ in the subgame starting at $h$; the formal proof is similar to that of \Eqref{equ:minmax}.

We now define for each player and each $\delta>0$ a subgame-perfect $\delta$-maxmin strategy that has specific properties. The definition is based on the following lemma, which follows from Martin \cite{martin1998determinacy} and Maitra and Sudderth \cite{maitra1998finitely}; see also Theorem 3.6 in Ashkenazi-Golan, Flesch, Predtetchinski, and Solan \cite{ashkenazigolan2021equilibria}.

\begin{lemma}
\label{theorem:martin}
Let $\delta > 0$ and $i \in \{1,2\}$.
There is a function $D_i^\delta : H \to [-M,M]$ with the following properties:
\begin{enumerate}
\item $D_i^\delta(h)\leq v_i(s_h)$ for every $h \in H$.
\item $D_i^\delta(h)\leq\val_i(D_i^\delta,h)$ for every $h \in H$.
\item $v_i(s_h) - \delta \leq \val_i(D_i^\delta,h)$ for every $h \in H$.
\item Let $h'\in H$ be a history. If a strategy pair $(\sigma_i,\sigma_j) \in \Sigma_i \times \Sigma_j$ satisfies
\begin{equation}\label{submar}\val_i(D^\delta_i,h)\,\leq\, \E[D^\delta_i\mid h,\sigma_i(h),\sigma_j(h)],\ \quad\forall h \in H\text{ with }h'\preceq h,
\end{equation}
then
\[\val_i(D^\delta_i,h)\,\leq\, \E_{h,\sigma_i,\sigma_j}[f_i],\ \quad\forall h \in H\text{ with }h'\preceq h. \]
\end{enumerate}
\end{lemma}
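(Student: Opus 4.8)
The plan is to obtain $D_i^\delta$ as the value function of Martin's auxiliary perfect-information game that covers the zero-sum game in which player~$i$ maximizes $f_i$ and player~$j$ minimizes it. By Martin \cite{martin1998determinacy} (or Maitra and Sudderth \cite{maitra1998finitely}) this zero-sum game is determined, so the value $v_i(h)$ exists at every history $h$, and by shift-invariance and Lemma~\ref{const-surr} it equals $v_i(s_h)$. Martin's proof produces, for the prescribed accuracy $\delta$, an auxiliary game of perfect information whose Borel determinacy yields an optimal value function; pulling this value back to the histories of $\Gamma$ gives the candidate $D_i^\delta$, with range in $[-M,M]$ since $f_i$ is bounded by $M$. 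Equivalently I would invoke Theorem~3.6 of Ashkenazi-Golan, Flesch, Predtetchinski, and Solan \cite{ashkenazigolan2021equilibria}, which packages exactly this construction for a single player's maxmin guarantee in the tail-measurable setting, and adapt it from repeated games to stochastic games: the state variable enters only through the one-shot matrices $\E[\,\cdot\mid h,a_i,a_j]$ and plays no further role in the argument.

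Properties~1--3 are then read off from the covering. Property~1, $D_i^\delta(h)\le v_i(s_h)$, and Property~3, $v_i(s_h)-\delta\le \val_i(D_i^\delta,h)$, say that $D_i^\delta$ sits just below the value while its one-shot value operator recovers $v_i$ up to $\delta$; Property~2, $D_i^\delta(h)\le \val_i(D_i^\delta,h)$, is the excessiveness (one-step submartingale property) of $D_i^\delta$ with respect to its own value operator. One cannot simply take $D_i^\delta=v_i$: for a merely Borel payoff the value need not satisfy the exact one-step equation $v_i(s_h)=\val_i(v_i,h)$, owing to the absence of semicontinuity, and the strict $\delta$-slack built into $D_i^\delta$ is precisely what makes a convergence argument run. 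The essential additional output of the construction, carrying the only genuine use of Borel determinacy, is the pathwise statement that \emph{under any strategy pair that keeps $D_i^\delta$ a one-step submartingale}, the almost-sure limit of $D_i^\delta$ along the play does not exceed the realized payoff $f_i$.

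Granting this, Property~4 is a short optional-sampling argument, which I would carry out as follows. Fix $h'$ and a pair $(\sigma_i,\sigma_j)$ satisfying \eqref{submar}, and fix any $h$ with $h'\preceq h$. Working under $\P_{h,\sigma_i,\sigma_j}$, let $h^t$ denote the random history of length $t$ and $\mathcal{F}_t$ the associated filtration. For every $h^t$ with $h\preceq h^t$ we have, by \eqref{submar} and then Property~2,
\[
\E\!\left[D_i^\delta(h^{t+1})\mid \mathcal{F}_t\right]\,=\,\E[D_i^\delta\mid h^t,\sigma_i(h^t),\sigma_j(h^t)]\,\ge\,\val_i(D_i^\delta,h^t)\,\ge\,D_i^\delta(h^t),
\]
so $(D_i^\delta(h^t))_{t\ge \len(h)}$ is a submartingale bounded in $[-M,M]$. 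By the martingale convergence theorem it converges almost surely and in $L^1$ to a limit $Y_\infty$, and the pathwise bound above gives $Y_\infty\le f_i$ almost surely. Since $\E[D_i^\delta(h^t)]$ is nondecreasing in $t$, taking $t\to\infty$ and using \eqref{submar} once at $h$ itself yields
\[
\E_{h,\sigma_i,\sigma_j}[f_i]\,\ge\,\E[Y_\infty]\,=\,\lim_{t\to\infty}\E[D_i^\delta(h^t)]\,\ge\,\E[D_i^\delta\mid h,\sigma_i(h),\sigma_j(h)]\,\ge\,\val_i(D_i^\delta,h),
\]
which is exactly Property~4, for every $h$ with $h'\preceq h$.

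The main obstacle is the single ingredient I have deferred to Martin: establishing that a conserving pair (one maintaining the submartingale) forces the potential to be realized in the limit, $Y_\infty\le f_i$ almost surely. This cannot follow from Properties~1--3 alone, since those never mention $f_i$; it is the Borel-determinacy core of the argument, and for a general shift-invariant Borel payoff it is precisely what Martin \cite{martin1998determinacy} and Maitra and Sudderth \cite{maitra1998finitely} supply (in gambling-theoretic language, the optional-sampling theorem for the least excessive majorant, namely that a conserving strategy attains the optimal reward). Everything else is the bounded-submartingale bookkeeping displayed above.
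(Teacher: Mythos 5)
Your proposal is correct and follows essentially the same route as the paper, which likewise obtains $D_i^\delta$ from Martin's auxiliary covering/win--lose game as packaged in Theorem~3.6 of Ashkenazi-Golan, Flesch, Predtetchinski, and Solan, extends to finitely many states via Maitra and Sudderth, and defers the genuine Borel-determinacy content to those references. The only difference is that you spell out the bounded-submartingale/optional-sampling bookkeeping behind Property~4, which the paper leaves implicit in the citation; that elaboration is sound.
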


Lemma~\ref{theorem:martin} states that, for each $\delta>0$ and player $i$, there is a function $D^\delta_i$ that assigns a real number $D^\delta_i(h)$ to each history $h$ with the following properties: 
(1) $D^\delta_i(h)$ is not more than player $i$'s maxmin value at history $h$, 
(2) the value $\val_i(D_i^\delta,h)$ of the one-shot game $G_i(D^\delta_i,h)$, which is induced by $D_i^\delta$ at history $h$, is at least $D^\delta_i(h)$, 
(3) and $\val_i(D_i^\delta,h)$ is also at least the maxmin value of player $i$ at history $h$, up to $\delta$, 
(4) under a pair of strategies $(\sigma_i,\sigma_j)$ in the subgame at $h'$, if in each one-shot game $G_i(D^\delta_i,h)$, the mixed actions played give an expected payoff of at least $\val_i(D_i^\delta,h)$, then in the stochastic game, for each history $h$ extending $h'$, player $i$'s expected payoff in the subgame at $h$ is also at least $\val_i(D^\delta_i,h)$.

Note that if we set $D_i^\delta(h) = v_i(s_h)$,
then properies~(1)--(3) hold, yet property~(4) does not hold.
To ensure that property~(4) holds as well we have to set $D_i^\delta(h)$ to be slightly lower than $v_i(s_h)$,
as expressed by properties~(1) and~(3). 

Lemma~\ref{theorem:martin} follows from Martin \cite{martin1998determinacy} for repeated games, i.e., when $|S|=1$. Indeed, Martin \cite{martin1998determinacy} considers an auxiliary win-lose game between two artifical players, called player I and player II. The function $D^\delta_i$ is in essence a strategy of player I in this game such that this strategy is winning in each subgame. For a precise exposition, we refer to Ashkenazi-Golan, Flesch, Predtetchinski, and Solan \cite{ashkenazigolan2021equilibria}; properties (2)--(4) directly follow from their lemma, whereas property (1) follows from their construction. The extension to a finite state space $S$ follows by the arguments of Maitra and Sudderth \cite{maitra1998finitely}.

From now on we fix a specific function $D^\delta_i$ for each $\delta>0$ and each player $i\in\{1,2\}$ given by Lemma~\ref{theorem:martin}.
We now describe a specific subgame-perfect $\delta$-maxmin strategy that is based on $D_i^\delta$.

\begin{lemma}
\label{lemma:sigma}
Let $\delta > 0$ and $i \in \{1,2\}$.
Let $\sigma_i^\delta$ be a strategy for player $i$ such that,
for every history $h \in H$, $\sigma_i^\delta(h)$
is an optimal mixed action of player~$i$ in the one-shot zero-sum game $G_i(D^\delta_i,h)$. Then, the strategy $\sigma_i^\delta$ is subgame-perfect $\delta$-maxmin. Moreover, for every history $h$ and every action $a_j\in A_j$,
\begin{equation}\label{propsubgameopt}
D^{\delta}_i(h)\,\leq\,\val_i(D^{\delta}_i,h)\,\leq\,\E[D^{\delta}_i \mid h,\sigma_i^{\delta}(h),a_j].
\end{equation}
\end{lemma}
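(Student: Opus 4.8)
The plan is to prove the displayed chain~\eqref{propsubgameopt} first, and then to deduce the subgame-perfect $\delta$-maxmin property from it, using properties~(3) and~(4) of Lemma~\ref{theorem:martin}. For the left inequality in~\eqref{propsubgameopt}, nothing new is needed: $D_i^\delta(h)\leq\val_i(D_i^\delta,h)$ is precisely property~(2) of Lemma~\ref{theorem:martin}. For the right inequality I would unwind the definition of $\sigma_i^\delta(h)$. By construction it is an optimal mixed action for the maximizing player~$i$ in the one-shot zero-sum game $G_i(D_i^\delta,h)$, whose value is $\val_i(D_i^\delta,h)$ and whose payoff is the function $\E[D_i^\delta\mid h,\cdot,\cdot]$ of~\eqref{oneshot-D}. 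By the defining guarantee of an optimal strategy in a finite zero-sum game, playing $\sigma_i^\delta(h)$ secures at least the value against every action of the opponent; hence $\E[D_i^\delta\mid h,\sigma_i^\delta(h),a_j]\geq\val_i(D_i^\delta,h)$ for every $a_j\in A_j$, which is exactly the right inequality.

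Next I would verify that the pair $(\sigma_i^\delta,\sigma_j)$ satisfies the hypothesis~\eqref{submar} of property~(4) of Lemma~\ref{theorem:martin}, for an arbitrary strategy $\sigma_j$ of player~$j$. The key observation is that the map $x_j\mapsto\E[D_i^\delta\mid h,\sigma_i^\delta(h),x_j]$ is linear, since the expression in~\eqref{oneshot-D} is bilinear in the two mixed actions. Writing $\E[D_i^\delta\mid h,\sigma_i^\delta(h),\sigma_j(h)]$ as the $\sigma_j(h)$-weighted average of the quantities $\E[D_i^\delta\mid h,\sigma_i^\delta(h),a_j]$ over $a_j\in A_j$, and invoking the right inequality of~\eqref{propsubgameopt} for each of these, I obtain $\val_i(D_i^\delta,h)\leq\E[D_i^\delta\mid h,\sigma_i^\delta(h),\sigma_j(h)]$ at every history~$h$. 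Thus~\eqref{submar} holds for the pair $(\sigma_i^\delta,\sigma_j)$, taking $h'$ to be the initial history of the subgame in question.

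Finally, property~(4) of Lemma~\ref{theorem:martin} applied to $(\sigma_i^\delta,\sigma_j)$ yields $\val_i(D_i^\delta,h)\leq\E_{h,\sigma_i^\delta,\sigma_j}[f_i]$ for every history~$h$, and combining this with property~(3), namely $v_i(s_h)-\delta\leq\val_i(D_i^\delta,h)$, gives $\E_{h,\sigma_i^\delta,\sigma_j}[f_i]\geq v_i(s_h)-\delta$. As $\sigma_j$ was arbitrary, $\sigma_i^\delta$ is subgame-perfect $\delta$-maxmin. The only step that requires attention is the passage from the per-action guarantee in~\eqref{propsubgameopt} to the guarantee against an arbitrary, possibly mixed and history-dependent, strategy $\sigma_j$ that is needed to trigger property~(4); this is handled cleanly by the linearity of~\eqref{oneshot-D} in player~$j$'s mixed action, so I do not anticipate a genuine obstacle here—the lemma is essentially a direct packaging of the machinery already set up in Lemma~\ref{theorem:martin}.
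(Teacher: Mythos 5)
Your proposal is correct and follows essentially the same route as the paper's proof: both derive \eqref{propsubgameopt} from property~(2) of Lemma~\ref{theorem:martin} together with the optimality of $\sigma_i^\delta(h)$ in $G_i(D_i^\delta,h)$, then verify \eqref{submar} for an arbitrary $\sigma_j$ (the paper invokes optimality against the mixed action $\sigma_j(h)$ directly, while you pass through the per-action bound and linearity, which is the same fact), and conclude via properties~(3) and~(4). No gaps.
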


\begin{proof}
For every history $h \in H$ and action $a_j \in A_j$, \Eqref{propsubgameopt} follows by 
property 2 of Lemma~\ref{theorem:martin} and by the fact that $\sigma_i^\delta(h)$ is an optimal mixed action of player~$i$ in $G_i(D^\delta_i,h)$.

It remains to prove that $\sigma_i^\delta$ is subgame-perfect $\delta$-maxmin.
Take any strategy $\sigma_j$ for player $j$. 
For each history~$h$, as $\sigma_i^\delta(h)$ is an optimal mixed action of player~$i$ in $G_i(D^\delta_i,h)$, 
\[\val_i(D^{\delta}_i,h)\,\leq\,\E[D^{\delta}_i \mid h,\sigma_i^{\delta}(h),\sigma_j(h)].\]
Since $h$ is arbitrary, 
the strategy pair $(\sigma_i,\sigma_j)$ satisfies \Eqref{submar} for each history $h$ 
(when $h'$ is the initial history, consisting of the initial state only). Hence, by properties 3 and 4 of Lemma~\ref{theorem:martin}, for each history $h$, we have $v_i(s_h)-\delta\leq \val_i(D^\delta_i,h)\leq \E_{h,\sigma_i^\delta,\sigma_j}[f_i]$. Thus, $\sigma_i^\delta$ is subgame-perfect $\delta$-maxmin.
\end{proof}\medskip

\begin{notation}[the strategy $\sigma_i^\delta$]\rm 
\label{notation:sigma}
For each $\delta>0$ and each player $i \in \{1,2\}$
fix a strategy $\sigma_i^\delta$ given by Lemma~\ref{lemma:sigma}.
These strategies will be fixed from now on.
\end{notation}

\subsection{Controllability w.r.t.~Subgame-Perfect $\delta$-Maxmin Strategies}\label{sec-contr-sgperf}


\begin{notation}[The sets $Y_i^\delta(s)$ and $Y_i(s)$]\rm 
For each payer $i\in\{1,2\}$, each $\delta>0$, and each state $s \in S_0$, let $Y_i^\delta(s)$ denote the range of $\sigma_i^\delta$, restricted to histories that end in state~$s$:
\[ Y_i^\delta(s) \,:=\, \{ \sigma_i^\delta(h) \colon h \in H,\,s_h=s\} \,\subseteq\, \Delta(A_i), \]
where $\sigma_i^\delta$ is defined in Notation~\ref{notation:sigma}.
Let $Y_i(s)$ denote the set of all accumulation points of $(Y_i^\delta(s))_{\delta > 0}$ as $\delta$ goes to 0:
\[ Y_i(s) \,:=\, \lim_{\delta \to 0} Y_i^\delta(s) \,=\, \bigcap_{\delta > 0}\; \overline{\bigcup_{\delta'\in(0,\delta)} Y_i^{\delta'}(s)} \,\subseteq\, \Delta(A_i), \]
where $\overline{\bigcup_{\delta'\in(0,\delta)} Y_i^{\delta'}(s)}$ is the closure of $\bigcup_{\delta'\in(0,\delta)} Y_i^{\delta'}(s)$.
The set $Y_i(s)$ is compact and
\begin{equation}
\label{equ:limit:y}
\lim_{\delta \to 0} d(Y^\delta_i(s),Y_i(s)) \,=\, 0.
\end{equation}
\end{notation}

We briefly argue that \Eqref{equ:limit:y} holds.
Indeed, suppose by way of contradiction that $\eta:=\limsup_{\delta \to 0} d(Y^\delta_i(s),Y_i(s)) > 0$.
Then, there exist a sequence $(\delta_n)_{n \in \dN}$ of positive reals converging to 0
and a sequence $(x_i^{n}(s))_{n \in \dN}$ such that $x_i^{n}(s) \in Y_i^{\delta_n}(s)$ for each $n \in \dN$
and $\lim_{n \to \infty} d(x_i^{n}(s),Y_i(s)) =\eta$.
By taking a subsequence if necessary, we can assume that $(x_i^{n}(s))_{n \in \dN}$ converges to some $x_i(s)\in Y_i(s)$. 
By continuity, we obtain $d(x_i(s),Y_i(s))=\eta>0$, which is a contradiction. 

\begin{notation}[The product set $Y$]\rm 
Let 
\[Y\,:=\,\prod_{s\in S_0}\,\prod_{i=1,2}Y_i(s)\in\mathcal{X}.\]
\end{notation}

The following result is based on continuity and compactness of $Y$.
It states that if $C\subseteq S_0$ is $Y$-communicating, then for $\delta>0$ small, whenever an action $a_i\in A_i$ is played with large probability under some $y_i^\delta(s)\in Y_i^\delta(s)$, the action $a_i$ keeps the play in $C$ with large probability against any mixed action $y_j(s)\in Y_j(s)$ of player $j$.

\begin{lemma}
\label{lemma:67}
Let $C\subseteq S_0$ be $Y$-communicating,
and let $i \in \{1,2\}$. For every $\eta_1,\eta_2 > 0$ there is $w =w(\eta_1,\eta_2)> 0$ such that for every $\delta \in (0,w)$, every $s \in C$, every $y_i^\delta(s) \in Y_i^\delta(s)$, every $y_j(s) \in Y_j(s)$, and every $a_i \in A_i$, if $y^\delta_i(s,a_i) \geq \eta_1$ then $p\bigl((S \setminus C) \mid s,a_i,y_j(s)\bigr) \leq \eta_2$.
\end{lemma}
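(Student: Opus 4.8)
The plan is to reduce the statement to Condition~1 of $Y$-communicating (Definition~\ref{definition:communicating}) applied to the \emph{limit} set $Y(s)$, using the convergence \eqref{equ:limit:y} together with the finiteness of $C$ to turn a per-state limit into one uniform threshold $w$. First I would fix $\eta_1,\eta_2>0$. Since $C$ is finite and, by \eqref{equ:limit:y}, $d(Y_i^\delta(s),Y_i(s))\to 0$ as $\delta\to 0$ for each $s\in C$, I would choose $w>0$ so small that $d(Y_i^\delta(s),Y_i(s))<\eta_1$ holds simultaneously for every $\delta\in(0,w)$ and every $s\in C$ (a finite minimum of the state-wise thresholds).

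Next, I would fix $\delta\in(0,w)$, $s\in C$, a mixed action $y_i^\delta(s)\in Y_i^\delta(s)$ with $y_i^\delta(s,a_i)\geq\eta_1$, and any $y_j(s)\in Y_j(s)$. Because $Y_i(s)$ is compact, the infimum defining $d(y_i^\delta(s),Y_i(s))$ is attained at some $x_i(s)\in Y_i(s)$ with $d(y_i^\delta(s),x_i(s))<\eta_1$; hence $x_i(s,a_i)\geq y_i^\delta(s,a_i)-d(y_i^\delta(s),x_i(s))>0$, so $a_i\in\supp(x_i(s))$. Now $(x_i(s),y_j(s))\in Y_i(s)\times Y_j(s)=Y(s)$, so Condition~1 of Definition~\ref{definition:communicating} yields $p\big((S\setminus C)\mid s,x_i(s),y_j(s)\big)=0$. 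Expanding this escape probability as $\sum_{a_i'\in A_i} x_i(s,a_i')\cdot p\big((S\setminus C)\mid s,a_i',y_j(s)\big)$, a sum of nonnegative terms equal to zero, forces every term to vanish; since the coefficient $x_i(s,a_i)$ is strictly positive, I conclude $p\big((S\setminus C)\mid s,a_i,y_j(s)\big)=0\leq\eta_2$.

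The only delicate point --- and the main thing to get right --- is the uniformity of $w$: \eqref{equ:limit:y} is a per-state limit, so I would lean on the finiteness of $C$ to compress it into a single threshold, and on the fact that $d(Y_i^\delta(s),Y_i(s))$ is the \emph{directed} distance controlling \emph{every} point of $Y_i^\delta(s)$, so that the particular high-probability action $a_i$ genuinely survives in the support of an element $x_i(s)$ of the limit set and not merely in its $\delta$-approximation. I note that the argument in fact delivers the stronger conclusion $p\big((S\setminus C)\mid s,a_i,y_j(s)\big)=0$, and that the resulting $w$ does not depend on $\eta_2$; the weaker bound $\eta_2$ in the statement is all that is needed later. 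An equivalent route would be compactness-and-contradiction: negating the claim produces sequences $\delta_n\to 0$, $s_n\equiv s^*$, and $a_i^n\equiv a_i^*$ (by finiteness of $C$ and $A_i$) together with converging mixed actions whose limits lie in $Y(s^*)$ and violate Condition~1, in the style of the proof of Lemma~\ref{maxexit}.
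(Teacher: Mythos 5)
Your proof is correct, and it takes a different (direct) route from the paper's. The paper argues by contradiction: negating the statement yields $\eta_1,\eta_2>0$, a state $s$, an action $a_i$, and a sequence $\delta_k\to 0$ with $y_i^k(s)\in Y_i^{\delta_k}(s)$, $y_j^k(s)\in Y_j(s)$ such that $y_i^k(s,a_i)\geq\eta_1$ and $p\bigl((S\setminus C)\mid s,a_i,y_j^k(s)\bigr)\geq\eta_2$; passing to accumulation points $y_i^*(s)\in Y_i(s)$ and $y_j^*(s)\in Y_j(s)$, Condition~1 of Definition~\ref{definition:communicating} forces $p\bigl((S\setminus C)\mid s,y_i^*(s),y_j^*(s)\bigr)=0$, which contradicts the lower bound $\eta_1\eta_2$ coming from $p\bigl((S\setminus C)\mid s,y_i^k(s),y_j^k(s)\bigr)\geq y_i^k(s,a_i)\cdot p\bigl((S\setminus C)\mid s,a_i,y_j^k(s)\bigr)$. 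This is exactly the ``equivalent route'' you sketch in your last sentence. Your direct argument uses the same two ingredients --- Eq.~\eqref{equ:limit:y} and Condition~1 applied to the limit set $Y(s)$ --- but packages them quantitatively: a uniform threshold $w$ over the finitely many states with $d(Y_i^\delta(s),Y_i(s))<\eta_1$, a nearby $x_i(s)\in Y_i(s)$ (the infimum being attained by compactness of $Y_i(s)$) whose support contains $a_i$, and the vanishing of a nonnegative sum. What your version buys is a strictly stronger conclusion --- the exit probability is exactly $0$, not merely $\leq\eta_2$ --- and a threshold $w$ independent of $\eta_2$; what the paper's version buys is brevity, and the weaker bound is all that is used later (e.g., in Eq.~\eqref{equ:14.1}). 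Both are complete; there is no gap in your argument.
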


\begin{proof}
Suppose the opposite.
Then, there are $\eta_1,\eta_2 > 0$, $s \in C$, $a_i \in A_i$,
and a sequence $(\delta_k)_{k \in \dN}$ converging to 0,
such that for every $k \in \dN$ there are
$y_i^k(s) \in Y_i^{\delta_k}(s)$ and $y_j^k(s) \in Y_j(s)$ that satisfy
$y_i^k(s,a_i) \geq \eta_1$ and
$p((S\setminus C) \mid s,a_i,y_j^k(s)) \geq \eta_2$.

Assume w.l.o.g.~that the sequences $(y_i^k(s))_{k \in \dN}$ and $(y_j^k(s))_{k \in \dN}$
converge to some $y_i^*(s)\in \Delta(A_i)$ and $y_j^*(s)\in \Delta(A_j)$, respectively. 
Since $\lim_{k \to \infty} \delta_k = 0$ and since $Y_j(s)$ is compact, 
we have $y^*_i(s)\in Y_i(s)$ and $y^*_j(s) \in Y_j(s)$.
Hence, because $C$ is $Y$-communicating, $p((S\setminus C) \mid s,y^*_i(s),y^*_j(s)) = 0$.

On the other hand,
\[ p\bigl((S \setminus C) \mid s,y_i^k(s),y_j^k(s)\bigr)
\,\geq\, y_i^k(s,a_i) \cdot p\bigl((S \setminus C) \mid s,a_i,y_j^k(s)\bigr)
\,\geq\, \eta_1 \cdot \eta_2,\]
and letting $k\to\infty$ we obtain
$p\bigl((S \setminus C) \mid s,y^*_i(s),y^*_j(s)\bigr) \,\geq\, \eta_1 \cdot \eta_2>0$,
a contradiction.
\end{proof}\\

The next result states that when at state $s$ player~$j$ plays a mixed action in $Y_j(s)$,
she guarantees that the expected continuation maxmin value is at least $v_j(s)$.

\begin{lemma}
\label{lemma:expected:minmax}
For every $s \in S$, every $i\in \{1,2\}$, every $a_i \in A_i$, and every $y_j(s) \in Y_j(s)$,
\begin{equation}
\label{equ:81}
\E[v_j \mid s,a_i,y_j(s)] \,\geq\, v_j(s).
\end{equation}
\end{lemma}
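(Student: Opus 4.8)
The plan is to derive \Eqref{equ:81} from the limiting behavior of the subgame-perfect $\delta$-maxmin strategies $\sigma_j^\delta$, whose mixed actions generate the sets $Y_j^\delta(s)$ and, in the limit, $Y_j(s)$. The crucial input is \Eqref{deltamaxminvalue}, which says that for every history $h$ with $s_h=s$, every action $a_i\in A_i$, and every $\delta>0$, the mixed action $\sigma_j^\delta(h)\in Y_j^\delta(s)$ satisfies $\E[v_j\mid s,a_i,\sigma_j^\delta(h)]\geq v_j(s)-\delta$. Thus every element of $Y_j^\delta(s)$ enjoys this near-optimal blocking property against any fixed $a_i$, up to the error term $\delta$.

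First I would fix $s\in S$, $i\in\{1,2\}$, $a_i\in A_i$, and an arbitrary $y_j(s)\in Y_j(s)$. By the definition of $Y_j(s)$ as the set of accumulation points of $(Y_j^\delta(s))_{\delta>0}$, there exist a sequence $\delta_n\to 0$ and mixed actions $y_j^n(s)\in Y_j^{\delta_n}(s)$ with $y_j^n(s)\to y_j(s)$. Since each $y_j^n(s)$ is of the form $\sigma_j^{\delta_n}(h_n)$ for some history $h_n$ with $s_{h_n}=s$, inequality \Eqref{deltamaxminvalue} gives
\[
\E[v_j\mid s,a_i,y_j^n(s)]\,\geq\, v_j(s)-\delta_n
\]
for every $n\in\dN$.

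Next I would pass to the limit as $n\to\infty$. The map $y_j(s)\mapsto \E[v_j\mid s,a_i,y_j(s)]=\sum_{s'\in S}\sum_{a_j\in A_j}y_j(s,a_j)\,p(s'\mid s,a_i,a_j)\,v_j(s')$ is linear, hence continuous, in the finite-dimensional variable $y_j(s)$. Therefore $\E[v_j\mid s,a_i,y_j^n(s)]\to \E[v_j\mid s,a_i,y_j(s)]$, while the right-hand side $v_j(s)-\delta_n\to v_j(s)$. Taking limits in the displayed inequality yields $\E[v_j\mid s,a_i,y_j(s)]\geq v_j(s)$, which is exactly \Eqref{equ:81}. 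Since $s$, $i$, $a_i$, and $y_j(s)\in Y_j(s)$ were arbitrary, this completes the argument.

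This proof is essentially a continuity-plus-limit argument and I do not anticipate a serious obstacle; the only point requiring a little care is the passage to the limit, namely checking that the relevant expectation is genuinely continuous in $y_j(s)$ and that the approximating actions $y_j^n(s)$ can be chosen inside the $Y_j^{\delta_n}(s)$ with $\delta_n\to 0$ (so that the error term $\delta_n$ vanishes). Both are straightforward here because $S$ and $A_j$ are finite, so the expectation is a finite linear combination, and the structure of $Y_j(s)$ as the set of accumulation points supplies precisely such an approximating sequence.
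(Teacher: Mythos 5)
Your proposal is correct and follows essentially the same route as the paper: extract a sequence $\sigma_j^{\delta_k}(h_k)\in Y_j^{\delta_k}(s)$ with $\delta_k\to 0$ converging to $y_j(s)$, apply \Eqref{deltamaxminvalue} to get $\E[v_j\mid s,a_i,\sigma_j^{\delta_k}(h_k)]\geq v_j(s)-\delta_k$, and pass to the limit using continuity of the (finite, linear) expectation. The paper's proof is the same argument, stated slightly more tersely.
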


\begin{proof}
Since $y_j(s) \in Y_j(s)$, there is a sequence $(\delta_k)_{k \in \dN}$ converging to 0 and a sequence $(h_k)_{k \in \dN}$ of histories ending in state $s$, such that $y_j(s) = \lim_{k \to \infty} \sigma_j^{\delta_k}(h_k)$. Because $\sigma^{\delta_k}_j$ is subgame-perfect $\delta^k$-maxmin, by \Eqref{deltamaxminvalue} we have for every $k\in\dN$ and action $a_i\in A_i$ of player $i$,
\[\E[v_j\mid s,a_i,\sigma_j^{\delta_k}(h_k)] \,\geq\, v_j(s) - \delta_k.\]
The claim follows by taking the limit as $k\to\infty$.
\end{proof}\bigskip

The following result states that if $C$ is $Y$-communicating, 
neither player has a good $Y$-unilateral exit from $C$, and the maxmin value of each player is constant over $C$, 
then $C$ must be $Y$-jointly controlled.
It is the analog of Proposition~32 in Vieille \cite{vieille2000one}, called the property of alternative.
Yet, our proof greatly differs from that of Proposition~32 in Vieille \cite{vieille2000one}.

\begin{lemma}
\label{lemma:alternatives}
Let $C \subseteq S_0$ be such that
(1) $C$ is $Y$-communicating,
(2) $C$ is $Y$-blocked to each player, and
(3) $v_1$ and $v_2$ are constant over $C$, i.e., $v_i(s)=v_i(s')$ for all $s,s'\in C$ and $i\in\{1,2\}$.
Then, the set $C$ is $Y$-jointly controlled.
\end{lemma}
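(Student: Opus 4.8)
\textbf{Proof plan for Lemma~\ref{lemma:alternatives}.}

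The plan is to argue by contradiction, assuming that $C$ is \emph{not} $Y$-jointly controlled, and to derive from this a profitable $Y$-unilateral exit for one of the players, contradicting the hypothesis that $C$ is $Y$-blocked to both. Since $v_1,v_2$ are constant over $C$, write $v_k(C):=v_k(s)$ for $s\in C$ and $k\in\{1,2\}$. By Lemma~\ref{remark:blocked}(2), blockedness together with $Y$-communication (which gives closure under $Y$ by Condition~1 of Definition~\ref{definition:communicating}) yields $H_k(C,Y)=\max_{s\in C}v_k(s)=v_k(C)$ for each player $k$. Thus the joint-control inequality \eqref{eq:jointcont} to be contradicted reads: there is no $\mu\in\Delta(\mathscr{E}_{12}(C,Y))$ with $\sum \mu(s,a_1,a_2)\,\E[v_k\mid s,a_1,a_2]\geq v_k(C)$ for both $k$.

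First I would encode the joint exits as points in $\dR^2$: to each $(s,a_1,a_2)\in\mathscr{E}_{12}(C,Y)$ associate the continuation-value vector $\bigl(\E[v_1\mid s,a_1,a_2],\E[v_2\mid s,a_1,a_2]\bigr)$, together with the ``no-exit'' status-quo point $(v_1(C),v_2(C))$. The failure of joint control says that the convex hull of these finitely many value vectors does not meet the upper-right quadrant $\{(\xi_1,\xi_2):\xi_1\geq v_1(C),\ \xi_2\geq v_2(C)\}$; equivalently, after subtracting $(v_1(C),v_2(C))$, the convex hull $K$ of the shifted vectors misses the nonnegative quadrant. By the separating hyperplane theorem there is a nonzero vector $(\alpha_1,\alpha_2)$ that separates $K$ from the quadrant; because the quadrant is a cone with apex at the origin, the separating functional must be nonnegative on the quadrant's recession directions, forcing $\alpha_1,\alpha_2\geq 0$ (normalize $\alpha_1+\alpha_2=1$), and strictly negative on $K$: every joint exit satisfies
\[
\alpha_1\bigl(\E[v_1\mid s,a_1,a_2]-v_1(C)\bigr)+\alpha_2\bigl(\E[v_2\mid s,a_1,a_2]-v_2(C)\bigr)\,<\,0 .
\]
In words, no joint exit can simultaneously improve the weighted value; along every joint exit the weighted sum strictly drops below $v(C)$.

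The heart of the argument, and the step I expect to be the main obstacle, is to convert this impossibility for \emph{joint} exits into the existence of a profitable \emph{unilateral} exit, thereby contradicting blockedness. The idea is to consider the weighted value function $w:=\alpha_1 v_1+\alpha_2 v_2$ on $S$ and study how the players can influence it when the current state is in $C$ and play is restricted (for the opponent) to mixed actions in $Y_j(s)$. Using \eqref{equ:81} of Lemma~\ref{lemma:expected:minmax}, when player~$j$ plays any $y_j(s)\in Y_j(s)$ she guarantees $\E[v_j\mid s,a_i,y_j(s)]\geq v_j(C)$ for every $a_i$; I would try to combine the two players' such guarantees with the separation inequality to show that if \emph{both} players were blocked, then every action pair that leaves $C$ would have to lower $w$ on the $j$-coordinate for each $j$ simultaneously, which together with communication (Condition~2, giving perturbations that move freely inside $C$) forces a fixed-point/stationarity contradiction: the weighted value would have to be a strict supermartingale-type quantity that nevertheless cannot decrease inside the communicating, closed set. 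Concretely, I would build a stationary profile supported on $Y$ under which $C$ is never left (Condition~1) and compare the one-step change of $w$ at a boundary-type exit; blockedness on both sides caps unilateral gains while the separation inequality caps joint gains, and communication rules out the remaining ``all exits are bad for everyone'' scenario unless some exit is in fact a unilateral one that is good for the \emph{other} player—contradicting that player's blockedness. Making this last dichotomy precise, in particular handling the distinction between exits that are unilateral for player~$1$, unilateral for player~$2$, or genuinely joint, and ensuring the relevant mixed actions can be taken in the compact sets $Y_i(s)$ (so that Lemma~\ref{maxexit} can supply a best exit attaining the supremum), is where the real work lies and where I would spend the bulk of the proof.
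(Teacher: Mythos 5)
Your proposal rests on the premise that the negation of ``$Y$-jointly controlled'' together with hypotheses (1)--(3) can be refuted by a purely one-shot argument about exits (separating hyperplane on the value vectors of joint exits, then converting joint-exit impossibility into a profitable unilateral exit). This cannot work, because the lemma is \emph{false} as a self-contained statement about exits: it is only true under the standing Assumptions~\ref{As2} and~\ref{As3}. Consider a set $C$ from which there are no exits at all that preserve value --- in the extreme case, a single nonabsorbing state in which play remains forever. Such a $C$ is $Y$-communicating, $Y$-blocked to both players (vacuously), has constant maxmin values, and is \emph{not} $Y$-jointly controlled ($\mathscr{E}_{12}(C,Y)$ may even be empty, in which case your convex hull $K$ is empty and separation yields nothing). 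There is no contradiction with blockedness here; what is true instead is that an $\ep$-equilibrium exists for initial states in $C$. This is exactly Vieille's \emph{property of the alternatives}: either $C$ is jointly controlled, or the game starting in $C$ admits an $\ep$-equilibrium. The lemma concludes ``jointly controlled'' only because the second alternative is excluded by the global contrapositive assumption that nonabsorbing states admit no $\ep$-equilibrium.

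Consequently, the missing idea is the entire dynamic part of the argument. The paper's proof takes the subgame-perfect $\delta$-maxmin strategies $\sigma_i^\delta$ of Section~\ref{section:martin}, trims off the actions that leave $C$ with probability at least $\rho$ to obtain $\pi_i^{\rho,\delta}$, and then proves two things. First (Step~3), the play under $(\pi_1^{\rho,\delta},\pi_2^{\rho,\delta})$ must leave $C$ with probability tending to $1$ as $\rho,\delta\to 0$: otherwise one builds, via L\'evy's zero-one law, inner regularity of $\prob_{h,\pi_1,\pi_2}$, and new statistical tests for detecting deviations from non-stationary strategies, an $\ep$-equilibrium for some initial state in $C$ --- contradicting Assumption~\ref{As2}. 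Second (Step~4), the limiting distribution of the state--action triplet at the moment of exit is supported on $\mathscr{E}_{12}(C,Y)$ (because the trimming forces every surviving action to keep the play in $C$ against the opponent's limit mixed actions), and a suitably renormalized version of it satisfies \Eqref{eq:jointcont} by the submartingale/optional-stopping property of the functions $D_i^\delta$ --- no separating hyperplane is needed. Your plan contains neither the construction of an $\ep$-equilibrium in the non-exiting regime nor the limit-of-exit-distributions construction, and the ``fixed-point/stationarity contradiction'' you hope to extract from blockedness alone does not exist.
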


\begin{remark}\rm
\label{remark:explanation}
We here explain the most important differences between our proof and the proof of Proposition~32 in Vieille \cite{vieille2000one}. 
These two proofs differ in two main aspects: 
(i) 
As mentioned in Section~\ref{section:main:result}, 
in the setup of Vieille \cite{vieille2000one}, the set $Y$  is a singleton. In particular, this implies that the difference 
between the right-hand side and the left-hand side of
Eq.~\eqref{equ:100} has a uniform positive lower-bound.
In our setup, there is no such uniform positive lower-bound.
(ii) Both proofs contain an important step showing that, if the set $C$ is not jointly controlled, then there is an $\ep$-equilibrium for the initial states in the set $C$, for each $\ep>0$. 
In Vieille \cite{vieille2000one},
payoffs are long-run average,
and such an $\ep$-equilibrium can be constructed using a periodic construction,
which cycles among the states in $C$.
In our setup, payoffs are general,
and so no periodic construction will do.
We therefore use tools developed by Martin~\cite{martin1998determinacy}, 
as discussed in Section~\ref{section:martin}.

Now we futher discuss each proof separately with respect to these aspects. \smallskip

\noindent\textsc{The proof of Proposition~32 in Vieille \cite{vieille2000one}.} In Vieille \cite{vieille2000one}, each player is maximizing the long-run average of her stage payoffs. In this case, each player $i$ has a subgame-perfect $\delta$-maxmin strategy $\sigma^\delta_i$ for each $\delta>0$ with the following properties: (1) At each history $h$, the mixed action $\sigma^\delta_i(h)$ is optimal for player $i$ in the zero-sum stochastic game $\Gamma_{i,\lambda_{h,\delta}}$, where $\Gamma_{i,\lambda_{h,\delta}}$ is derived from the original game $\Gamma$ as follows: $\lambda_{h,\delta}$ is a fictitious discount factor depending on $h$ and $\delta$, player $i$ is maximizing her own $\lambda_{h,\delta}$-discounted payoff, while her opponent is minimizing player $i$'s $\lambda_{h,\delta}$-discounted payoff. (2) As $\delta$ goes to 0, the maximal fictitious discount factor that is used by $\sigma^\delta_i$ goes to 0, i.e., $\lim_{\delta\to 0}\sup\{\lambda_{h,\delta}:h\in H\}=0$. The existence of such a strategy $\sigma^\delta_i$ is due to the results of Mertens and Neyman \cite{mertens1981stochastic}; see also Section 6.2 in Vieille \cite{vieille2000one}.

(i) In a zero-sum stochastic game, there is a semi-algebraic function that assigns a stationary discounted optimal strategy
to each discount factor, see for instance Neyman \cite{neyman2003real}, Section 6.3.1 in Vieille \cite{vieille2000one},
or Chapter~7 in Solan \cite{maschler2013game}. 
It follows that the set $Y_i(s)$ 
can be taken to be
a singleton for each player $i\in\{1,2\}$ and each state $s\in C$. Hence, the set $Y$ is a singleton as well. 

In particular, as $Y$ is a singleton, the number of $Y$-unilateral exits of each player is finite.
As a result, because $C$ is $Y$-blocked to player~$i$, the difference 
between the right-hand side and the left-hand side of
in Eq.~\eqref{equ:100} has a uniform positive lower-bound.

(ii) In Lemma 37 in Vieille \cite{vieille2000one}, the following claim is shown under conditions similar to the ones in our Lemma \ref{lemma:alternatives}: if the set $C$ is not jointly controlled, then there is an $\ep$-equilibrium for the initial states in the set $C$, for each $\ep>0$. Assuming that $C$ is not jointly controlled, for large discount factors, under the pair of stationary discounted optimal strategies, the play remains forever in the set $C$ with positive probability. Using this, Vieille's proof is based on analysing the behavior of the stationary discounted optimal strategies of the players and their limit, the unique stationary strategy in $Y$.\smallskip

\noindent\textsc{The proof of Lemma \ref{lemma:alternatives}.} In this case, the payoff functions of the players are only assumed to be bounded, Borel-measurable, and shift-invariant.

(i) The set $Y$ need not be a singleton, in contrast with Vieille's proof. As a consequence, the number of $Y$-unilateral exits of a player may be infinite, and hence the difference 
between the right-hand side and the left-hand side of
in Eq.~\eqref{equ:100} may not have a uniform positive lower-bound.

(ii) As in the proof in Vieille \cite{vieille2000one}, 
we need to show a similar claim, cf.~Step 3 in the proof of Lemma \ref{lemma:alternatives}: if the set $C$ is not jointly controlled, then there is an $\ep$-equilibrium for the initial states in the set $C$, for each $\ep>0$ (which leads to a contradiction with Assumption \ref{As3}). In our proof, if $C$ is not jointly controlled, then, for small $\delta>0$, under some modifications of the pair $(\sigma^\delta_1,\sigma^\delta_2)$ of subgame-perfect $\delta$-optimal strategies, the play remains forever in the set $C$ with positive probability. 
Our proof to show that an $\ep$-equilibrium exists in this case relies on  different techniques: inner regularity of probability measures and L\'{e}vy's zero-one law. 
\end{remark}

\begin{proof}[Proof of Lemma~\ref{lemma:alternatives}] 
Let $C\subseteq S_0$ satisfy Conditions~(1), (2), and~(3) in the statement of Lemma~\ref{lemma:alternatives}. For each $i\in\{1,2\}$, we denote by $v_i(C)$ the common maxmin value of player $i$ for the initial states in $C$. \medskip

\noindent\textbf{Step 0}: Fixing a constant $\alpha$, and for all $\rho>0$ fixing positive reals $\eta^\rho$, $\kappa^\rho$, and $\delta^\rho$.\smallskip

\noindent\textsc{Fixing $\alpha$:} Fix $\alpha:=\max_{i\in\{1,2\}} |A_i|$. \smallskip

\noindent\textsc{Fixing $\eta^\rho$ for each $\rho>0$:} Take any $\rho>0$. For every $i\in\{1,2\}$, the set of $Y$-unilateral exits of player~$i$ that lead the play outside $C$ with probability at least $\rho$ is
\[ \mathscr{E}_i^\rho(C,Y)\, :=\, \bigl\{(s,a_i,y_j(s)) \in C\times A_i\times Y_j(s) \colon p(C \mid s,a_i,y_j(s)) \leq 1-\rho\bigr\}; \]
thus $\mathscr{E}_i^\rho(C,Y)\subseteq \mathscr{E}_i(C,Y)$.
Note that $\mathscr{E}_i^\rho(C,Y)$ is compact, as $C$ and $A_i$ are finite,
$Y_j(s)$ is compact,
and $p(C \mid s,a_i,\cdot)$ is continuous.
For each $(s,a_i,y_j(s)) \in \mathscr{E}_i^\rho(C,Y)$, it holds that $\E[v_i\mid s,a_i,y_j(s)]\,<\,v_i(C)$, because $C$ is $Y$-blocked to player $i$.
Let 
\[\eta^\rho\,:=\,\min_{i\in\{1,2\}}\,\min_{(s,a_i,y_j(s)) \in \mathscr{E}_i^\rho(C,Y)}\Big(v_i(C)-\E[v_i\mid s,a_i,y_j(s)]\Big) > 0,\]
where the inequality holds by continuity and the compactness of $\mathscr{E}_i^\rho(C,Y)$. 
The function $\rho \mapsto \eta^\rho$ is monotone 
nondecreasing: if $0<\rho'\leq\rho$ then $\mathscr{E}_i^{\rho'}(C,Y)\supseteq \mathscr{E}_i^\rho(C,Y)$, and hence $\eta^{\rho'}\leq\eta^\rho$. 
By definition, for each player $i\in\{1,2\}$ and each exit $(s,a_i,y_j(s)) \in \mathscr{E}_i^\rho(C,Y)$ we have
\begin{equation}\label{choice-eta} 
\E[v_i \mid s,a_i,y_j(s)] \,\leq\, v_i(C) - \eta^\rho.
\end{equation}
\noindent\textsc{Fixing $\kappa^\rho$ for each $\rho>0$.} Take any $\rho>0$. Let $\kappa^\rho\in(0,1)$ be small enough so that 
\begin{equation}
\label{choicekappa}
\rho-(\alpha+1)\kappa^\rho>0\quad\text{ and }\quad\alpha\kappa^\rho\cdot 4M-\eta^{\overline{\rho}}<0,
\end{equation} 
where $\overline{\rho}=\rho-\alpha\kappa^\rho$. We argue briefly that the second inequality is satisfied for small $\kappa^\rho\in(0,1)$. 
Indeed, choose $\kappa^\rho\in(0,1)$ 
sufficiently small
so that $\alpha\kappa^\rho\cdot 4M-\eta^{\frac{1}{2}\rho}<0$ and $\overline{\rho}\geq \frac{1}{2}\rho$. Then, $\eta^{\overline{\rho}}\geq \eta^{\frac{1}{2}\rho}$, and hence $\alpha\kappa^\rho\cdot 4M-\eta^{\overline{\rho}}\leq\alpha\kappa^\rho\cdot 4M-\eta^{\frac{1}{2}\rho}<0$, as desired.\smallskip

\noindent\textsc{Fixing $\delta^\rho$ for each $\rho>0$.} Take any $\rho>0$. By Eq.~\eqref{equ:limit:y}, there is $\delta^\rho \in (0,\kappa^\rho)$ such that for every $\delta \in (0,\delta^\rho)$, every state $s\in S_0$, and each player $i\in\{1,2\}$, we have
\begin{equation}
\label{equ:5.1}
d(Y^\delta_i(s), Y_i(s)) \,<\, \kappa^\rho.
\end{equation}
Assume w.l.o.g.~that $\delta^\rho > 0$ is sufficiently small such that 
\begin{equation}\label{ineq20}
\delta^\rho\,<\,w\Big(\frac{\kappa^\rho}{\alpha},\rho-(\alpha+1)\kappa^\rho\Big),
\end{equation}
where the function $w(\cdot,\cdot)$ is given in Lemma~\ref{lemma:67}, and 
\begin{equation}\label{combineq}
\delta^\rho+\alpha\kappa^\rho\cdot 4M-\eta^{\overline{\rho}}<0,
\end{equation}
which is possible due to \Eqref{choicekappa}. 
By Lemma~\ref{lemma:67} and \Eqref{ineq20}, for every $\delta \in (0,\delta^\rho)$, every $s\in C$, 
every $y_i^\delta(s) \in Y_i^\delta(s)$,
every $y_j(s) \in Y_j(s)$,
and every $a_i \in A_i$ such that $y_i^\delta(s,a_i)  \geq \kappa^\rho/\alpha$,
we have
\begin{equation}
\label{equ:14.1}
p\bigl( (S \setminus C) \mid s,a_i,y_j(s)\bigr) \leq \rho - (\alpha+1)\kappa^\rho < \rho. 
\end{equation}

\bigskip
\noindent\textbf{Step 1}: Definition of a strategy pair $\pi^{\rho,\delta}=(\pi^{\rho,\delta}_1,\pi^{\rho,\delta}_2)$ for each $\rho>0$ and $\delta \in (0,\delta^\rho)$.

Let $\rho>0$ and $\delta\in (0,\delta^\rho)$. In this step we define a strategy pair $\pi^{\rho,\delta}$,
which is derived from $\sigma^\delta$
by setting to 0 the probability to play actions that lead outside $C$ with high probability under $\sigma^\delta(h)$, for every history $h$ (and normalizing the remaining probabilities).

For each history $h \in H$, denote by $B^{\rho,\delta}_i(h)$ the set of actions of player~$i$ that, when faced with $\sigma^\delta_j(h)$,
lead the play outside the set $C$ with probability at least $\rho$:
\begin{equation}
\label{equ:B}
B^{\rho,\delta}_i(h) \,:=\,\left\{ a_i \in A_i \colon
p\bigl( (S\setminus C)\mid s_h,a_i,\sigma^\delta_j(h)\bigr) \geq\rho\right\}. 
\end{equation}

We argue that, for each $h\in H$ with final state $s_h\in C$, the mixed action $\sigma^\delta_i(h)$ assigns a low probability to actions in $B^{\rho,\delta}_i(h)$:
\begin{equation}
\label{equ:6.1}
\sigma^\delta_i\left(B^{\rho,\delta}_i(h) \mid h\right) \,<\, \kappa^\rho.
\end{equation}
Indeed, let $a_i \in B^{\rho,\delta}_i(h)$, so that
$p\bigl( (S \setminus C) \mid s_h,a_i,\sigma^\delta_j(h)) \,\geq\, \rho$.
By \Eqref{equ:5.1}, there is $y_j(s_h)\in Y_j(s_h)$ satisfying $d(\sigma^\delta_j(h),y_j(s_h)) < \kappa^\rho$.
Then
\begin{eqnarray} 
\nonumber
p \bigl( (S \setminus C) \mid s_h,a_i,y_j(s_h)) &\geq& 
p\bigl( (S \setminus C) \mid s_h,a_i,\sigma^\delta_j(h))- \alpha\kappa^\rho\\
&\geq& \rho - \alpha\kappa^\rho \,>\, \rho - (\alpha+1)\kappa^\rho\,>\,0,
\label{equ:15.1}
\end{eqnarray}
where the last inequality is by \Eqref{choicekappa}. 
\Eqref{equ:14.1} implies that $\sigma^\delta_i(a_i \mid h) < \kappa^\rho/\alpha$.
Eq.~\eqref{equ:6.1} follows since $\alpha \geq |A_i|$.

For each $i\in \{1,2\}$, we now define a strategy $\pi^{\rho,\delta}_i$ that follows $\sigma_i^\delta$,
except that at each history $h$ the strategy $\pi^{\rho,\delta}_i$ does not play actions in $B_i^{\rho,\delta}(h)$. Recall that a history is said to remain in $C$ if all states along this history belong to $C$. 
We define
$\pi^{\rho,\delta}_i(h) := \sigma^\delta_i(h)$ for all histories that do \emph{not} remain in $C$,
and for histories $h$ that remain in $C$ we define
\begin{equation}
\label{equ:6.2}
\pi^{\rho,\delta}_i(a_i \mid h) := \left\{
\begin{array}{lll}
0, & \ \ \ \ \ & \text{if }a_i \in B_i^{\rho,\delta}(h),\\
\frac{\sigma^\delta_i(a_i \mid h)}{\sigma^\delta_i\left(A_i\setminus B_i^{\rho,\delta}(h) \mid h\right)}, & & \text{if }a_i \not\in B_i^{\rho,\delta}(h),
\end{array}
\right.
\end{equation}
which is exactly the mixed action $\sigma_i^\delta(h)$ conditioned on not choosing actions in $B_i^{\rho,\delta}(h)$.
Eq.~\eqref{equ:6.1} implies that the denominator in Eq.~\eqref{equ:6.2} is positive,
hence $\pi_i^{\rho,\delta}$ is well defined. Eq.~\eqref{equ:6.1} also implies that for each player $i\in\{1,2\}$ and each history $h\in H$,
\begin{equation}
\label{equ:6.3}
d(\sigma^\delta_i(h), \pi^{\rho,\delta}_i(h)) \,\leq\, \kappa^\rho.
\end{equation}
For each history $h\in H$ and each action $a_i \in A_i\setminus B_i^{\rho,\delta}(h)$, we have
\begin{equation}
\label{equ:201}
p \bigl( (S \setminus C) \mid s_h,a_i,\pi^{\rho,\delta}_j(h)\bigr) \,\leq\,
p \bigl( (S \setminus C) \mid s_h,a_i,\sigma^\delta_j(h) \bigr)  + \alpha\kappa^\rho \,\leq\, \rho + \alpha\kappa^\rho,
\end{equation}
where the first inequality is by \Eqref{equ:6.3} and the second inequality is by \Eqref{equ:B}.

\bigskip
\noindent\textbf{Step 2}: For every $\rho>0$ and every $\delta\in(0,\delta^\rho)$, the following properties hold for the strategy pair $(\pi^{\rho,\delta}_1,\pi^{\rho,\delta}_2)$:
\begin{enumerate}
\item The payoff is high:
for each player $i \in \{1,2\}$ and each history $h'\in H$,
\begin{equation}
\label{equ:6.4}
\E_{h',\pi^{\rho,\delta}_1,\pi^{\rho,\delta}_2}[f_i]\, \geq\, \val_i(D^\delta_i,h'),
\end{equation}
and if, moreover, $h'$ remains in $C$, then
\begin{equation}
\label{equ:6.44}
\E_{h',\pi^{\rho,\delta}_1,\pi^{\rho,\delta}_2}[f_i] \,\geq\, v_i(C) - \delta.
\end{equation}
\item The maxmin values cannot decrease in expectation much during the play: Consider any random variable $\xi:\mathscr{R}\to\dN$  with the following property:%
\footnote{This property means that $\xi$ is a finite stopping time where stopping in stage $t$ is independent of the actions in stage $t$.}
For every run $r=(s^1,a^1,s^2,a^2,\ldots)$, if $\xi(r)=t$, then $\xi(r')=t$ for every run $r'=(s'^1,a'^1,s'^2,a'^2,\ldots)$ that coincides with $r$ up to the state in stage $t$, i.e., $s^1=s'^1,a^1=a'^1,\ldots,s^t=s'^t$. Let $h\in H$ be a history such that $\xi(r)>\len(h)$ for all (or equivalently, for some) run $r\succ h$. Then, for each player $i\in\{1,2\}$,
\begin{equation}
\label{stoptime}
v_i(s_h)-\delta\,\leq\,\E_{h,\pi^{\rho,\delta}_1,\pi^{\rho,\delta}_2}\big[v_i(s^\xi)\big];
\end{equation}
here $s^{\xi}$ is the state in stage $\xi$.
\end{enumerate}
Fix $\rho>0$, $\delta\in(0,\delta^\rho)$, and $i \in \{1,2\}$.

First we prove that for each history $h\in H$ we have
\begin{equation}
\label{equ:72}
\val_i(D^\delta_i,h)\,\leq\,\E[D_i^\delta \mid h,\pi^{\rho,\delta}_i(h),\pi^{\rho,\delta}_j(h)].
\end{equation}
Let $h\in H$. If $h$ does not remain in $C$, then $\pi^{\rho,\delta}_i(h) = \sigma^\delta_i(h)$, and hence \Eqref{equ:72} follows from the choice of $\sigma_i^\delta$, cf.~Lemma \ref{lemma:sigma}. 

Assume now that $h$ remains in $C$,
and let $a_i \in B_i^{\rho,\delta}(h)$. By \Eqref{equ:5.1}, there is $y_j(s_h) \in Y_j(s_h)$ such that $d\left(\sigma^\delta_j(h),y_j(s_h)\right) < \kappa^\rho$. 
By the definition of $B_i^{\delta,\rho}(h)$ and \Eqref{choicekappa}, 
\begin{equation}
\label{closetorho}
 p\bigl( (S \setminus C) \mid s_h,a_i,y_j(s_h)) \,\geq\, 
 p\bigl( (S \setminus C) \mid s_h,a_i,\sigma^\delta_j(h)) -\alpha\kappa^\rho
 \,\geq\, \rho-\alpha\kappa^\rho\,>\,0.
\end{equation}
This implies that $(s_h,a_i,y_j(s_h))\in \mathscr{E}_i^{\overline{\rho}}(C,Y)$ for $\overline{\rho}=\rho-\alpha\kappa^\rho$.
We then have
\begin{eqnarray}
\label{equ:long:1}
\E[D_i^\delta \mid h,a_i,\pi^{\rho,\delta}_j(h)]
&\leq&
\E[D_i^\delta \mid h,a_i,\sigma^{\delta}_j(h)] + \alpha\kappa^\rho\cdot 2 M\\\label{equ:long-e}
&\leq&
\E[D_i^\delta \mid h,a_i,y_j(s_h)] + \alpha\kappa^\rho\cdot 4M\\
\label{equ:long:2}
&\leq&
\E[v_i \mid s_h,a_i,y_j(s_h)] + \alpha\kappa^\rho\cdot 4M \\
\label{equ:long:3}
&\leq& v_i(C) - \eta^{\overline{\rho}} + \alpha\kappa^\rho\cdot 4M\\
\label{equ:long:4}
&\leq& \val_i(D^\delta_i,h)+\delta  - \eta^{\overline{\rho}} + \alpha\kappa^\rho\cdot 4M\\
\label{equ:long:5}
&<& \val_i(D^\delta_i,h),
\end{eqnarray}
where
Eq.~\eqref{equ:long:1} holds by Eq.~\eqref{equ:6.3};
Eq.~\eqref{equ:long-e} holds by the choice of $y_j(s_h)$;
Eq.~\eqref{equ:long:2} holds
by Property 1 of Lemma~\ref{theorem:martin};
Eq.~\eqref{equ:long:3} holds 
since $C$ is $Y$-blocked to player~$i$, see \Eqref{choice-eta};
Eq.~\eqref{equ:long:4} holds by Property 3 of Lemma~\ref{theorem:martin};
and Eq.~\eqref{equ:long:5} holds by \Eqref{combineq}.

By Lemma \ref{lemma:sigma} and since the expectation is linear, it follows that
\begin{eqnarray*}
\val_i(D^{\delta}_i,h) &\leq& \E[D_i^\delta \mid h,\sigma_i^\delta(h),\pi^{\rho,\delta}_j(h)] \\
&=& \sum_{a_i\in B_i^{\rho,\delta}(h)} \sigma_i^\delta(a_i\mid h)\cdot\E[D_i^\delta \mid h,a_i,\pi^{\rho,\delta}_j(h)] \\
& & +\ \sigma^\delta_i\big(A_i\setminus B^{\rho,\delta}_i(h) \mid h\big)\cdot \E[D_i^\delta \mid h,\pi_i^{\rho,\delta}(h),\pi^{\rho,\delta}_j(h)].
\end{eqnarray*}
In view of Eqs.~\eqref{equ:long:1}--\eqref{equ:long:5}, we conclude that \Eqref{equ:72} holds, as desired.\smallskip

We now prove Property 1 of Step 2. 
For every history $h'\in H$, \Eqref{equ:6.4} holds by Property 4 of Lemma~\ref{theorem:martin} and \Eqref{equ:72}. Subsequently, for every history $h'\in H$, \Eqref{equ:6.44} follows by Property 3 of Lemma~\ref{theorem:martin}. \smallskip

We turn to prove Property 2 of Step 2. 
Fix any random variable $\xi$ and any history $h\in H$ satisfying the conditions in Property 2 of Step 2, and any player $i\in\{1,2\}$. 
By Property 2 of Lemma~\ref{theorem:martin} and by \Eqref{equ:72}, for every history $h'\in H$
\begin{equation}
\label{subm}
D^\delta_i(h')\,\leq\,\val_i(D^\delta_i,h')\,\leq\,\E[D^\delta_i\mid h',\pi_i^{\rho,\delta}(h'),\pi_j^{\rho,\delta}(h')],
\end{equation}
which implies that $D_i^\delta(\cdot)$ is a submartingale under $(\pi_i^{\rho,\delta},\pi_j^{\rho,\delta})$. 
We therefore have 
\begin{align*}
v_i(s_h)-\delta\,&\leq\,\val_i(D^\delta_i,h)\\
&\leq\,\E[D^\delta_i\mid h,\pi_i^{\rho,\delta}(h),\pi_j^{\rho,\delta}(h)]\\
&\leq\,\E_{h,\pi_i^{\rho,\delta},\pi_j^{\rho,\delta}}[D_i^\delta(h^\xi)]\\
&\leq\,\E_{h,\pi_i^{\rho,\delta},\pi_j^{\rho,\delta}}[v_i(s^\xi)],
\end{align*}
where the first inequality follows by Property 3 of Lemma~\ref{theorem:martin}, the second inequality follows by \Eqref{subm}, the third inequality follows by the optional stopping theorem for submartingales, and the last inequality follows by Property 1 of Lemma~\ref{theorem:martin}. 
The proof of Property 2, and thereby also the proof of Step 2, are complete. \bigskip

For every initial state $s \in C$, every $\rho>0$, and every $\delta\in(0,\delta^\rho)$,
the probability under $(\pi^{\rho,\delta}_1,\pi^{\rho,\delta}_2)$ that the play eventually leaves $C$
is
\begin{equation}
\label{def-nu}
\nu_s^{\rho,\delta}\, :=\, \prob_{s,\pi^{\rho,\delta}_1,\pi^{\rho,\delta}_2}(\theta^{exit}_C < \infty).
\end{equation}
For every initial state $s \in C$ and every $\rho>0$,
let $\nu^\rho_s$ be an accumulation point of $(\nu_s^{\rho,\delta})_{\delta \in(0,\delta^\rho)}$ as $\delta$ goes to 0. 
For every initial state $s \in C$,
let $\nu_s$ be an accumulation point of $(\nu_s^\rho)_{\rho>0}$ as $\rho$ goes to 0.

\bigskip
\noindent\textbf{Step 3}: $\nu_{s} = 1$ for each initial state $s\in C$.

Assume by way of contradiction that there is an initial state $s\in C$ such that $\nu_s<1$.
We will prove that, for each $\ep>0$, there exists a state $s^*\in C$ such that if the initial state is $s^*$, then there exists an $\ep$-equilibrium.
Since $C$ is finite, this will imply that there is an initial state $s^{**}\in C$ for which there is an $\ep$-equilibrium for each $\ep>0$. As $s^{**}\in C\subseteq S_0$, this will lead to a contradiction with Assumption \ref{As2}.

The idea of the argument is as follows.
Since $\nu_s<1$, it follows that for $\rho$ and $\delta$ sufficiently small, 
$\nu_s^{\rho,\delta}$ is bounded away from 1.
Thus, under $(\pi^{\rho,\delta}_1,\pi^{\rho,\delta}_2)$ the play always remains in $C$ with probability bounded away from 0.
This implies that there is a history $h = h(\rho,\delta)$ that starts at $s$ and such that,
in the subgame that starts at $h$,
under $(\pi^{\rho,\delta}_1,\pi^{\rho,\delta}_2)$ the play remains in $C$ with probability arbitrarily close to 1.
Since $\pi^{\rho,\delta}_1$ and $\pi^{\rho,\delta}_2$ are subgame-perfect $\delta$-maxmin strategies,
the payoff to each player in the subgame that starts at $h$ under $(\pi^{\rho,\delta}_1,\pi^{\rho,\delta}_2)$
is at least the maxmin value at $s_h$ minus $\delta$.
Since $C$ is blocked to both players,
no player can improve her payoff by much by playing an action she is not supposed to play,
provided such a deviation triggers punishment at the maxmin level 
(which coincides with the minmax level as the game involves two players).
A deviation that may be profitable to player~$i$ is to play within the support of $\pi^{\rho,\delta}_i$
in a way that increases her expected payoff.
When the payoff function is the long-term average payoff, 
standard statistical tests can be employed to guarantee that no player~$i$ deviates from $\pi^{\rho,\delta}_i$ in a way that improves her payoff.
In our case, the payoff function is general, hence standard statistical tests will not do.
We therefore use a new statistical test, that is suited for general payoff functions.
Since the payoff function is Borel measurable with finite range (cf.~Assumption~\ref{As1}),
there is a history $h'$ that extends $h$ such that,
in the subgame that starts at $h'$,
with high probability under $(\pi^{\rho,\delta}_1,\pi^{\rho,\delta}_2)$ the payoff is some constant $g \in \dR^2$.
That is, if the history $h'$ occurs, and the players follow $(\pi^{\rho,\delta}_1,\pi^{\rho,\delta}_2)$,
then with high probability their payoff will be $g$.
Since $\pi^{\rho,\delta}_1$ and $\pi^{\rho,\delta}_2$ are subgame-perfect $\delta$-maxmin strategies, $g_i \geq v_i - \delta$
for $i\in\{1,2\}$.
Since the space $\mathscr{R}$ is a Polish space, the measure $\prob_{h',\pi^{\rho,\delta}_1,\pi^{\rho,\delta}_2}$ is regular,
and there is a closed subset $R$ of the set $\{r \in \mathscr{R} \colon r \succ h', f_1(r) = g_1, f_2(r) = g_2\}$
whose measure under $\prob_{h',\pi^{\rho,\delta}_1,\pi^{\rho,\delta}_2}$ is high.
Since $R$ is closed, its complement is open --
it is the union of cylinder sets.
This means that if the run generated by the players is outside $R$,
we know it in finite time.
When this happens,
the players will switch to punishment strategies.
This threat of punishment will ensure that no player has an incentive to deviate.
Since payoffs are shift-invariant,
if the initial state is $s_{h'}$ and each player $i$ follows the strategy $\pi^{\rho,\delta}_i$ induced in the subgame that starts at $h'$ while utilizing the test described above,
the resulting strategy profile is an $\ep$-equilibrium.

We will now formalize the ideas described above.
Fix $\ep>0$. 
Let $\rho > 0$ and $\delta\in(0,\delta^\rho)$ be sufficiently small such that $\nu_s^{\rho,\delta}<1$ and for each player $i\in\{1,2\}$,
\begin{equation}
\label{smallrho11}
(1-\rho-\alpha\kappa^\rho)\cdot v_i(C)+(\rho+\alpha\kappa^\rho)\cdot M\,<\,v_i(C)+\frac{\ep}{2}
\end{equation}
and
\begin{equation}
\label{overdelta}
\max\{ f_i(r) \colon r\in\mathscr{R},\ f_i(r) < v_i(C)\} \,<\, v_i(C) - \delta.
\end{equation}
\Eqref{smallrho11} holds for small $\rho>0$ as $\kappa^\rho<\rho$ by \Eqref{choicekappa}, and \Eqref{overdelta} holds for small $\delta>0$, as by Assumption~\ref{As1},
the range of each payoff function $f_i$ is finite. 

In the proof of this step, the parameters
$s$, $\rho$, and $\delta$ are fixed, and 
hence we will not stress the dependency on these parameters of new quantities and objects that are defined below.

Let $\mu\in(0,1)$ be small enough so that 
\begin{equation}
\label{choice-mu}
\sqrt\mu +\mu\,<\,\frac{\ep}{4M},
\end{equation} and for each player $i\in\{1,2\}$ the following inequality hold:
\begin{equation}
\label{smallrho}
(1-\sqrt\mu-\rho- \alpha \kappa^\rho)\cdot v_i(C)+(\sqrt\mu+\rho+ \alpha\kappa^\rho)\cdot M+\mu\,\leq\,v_i(C)+\frac{\ep}{2}
\end{equation}
and
\begin{equation}
\label{choice-ep}
(1-\mu)\cdot \max\{ f_i(r) \colon r\in\mathscr{R},\ f_i(r) < v_i(C)\} +\mu\cdot M\,<\,v_i(C)-\delta;
\end{equation}
such a $\mu$ exists due to Eqs.~\eqref{smallrho11} and \eqref{overdelta}. 
By Eq.~\eqref{choice-mu} and Assumption \ref{As1}, we have 
$\mu<\sqrt\mu<\frac{\ep}{4}$,
and therefore by Eq.~\black\eqref{choicekappa},
\begin{equation}
\label{bigineq}
-\eta^{\overline{\rho}}+\mu+\alpha\kappa^\rho\cdot 4M\,\leq\,\frac{\ep}{4}.
\end{equation}

\noindent\textbf{Step 3.1}. Selecting a state $s^*\in C$. 

Because $\nu_s^{\rho,\delta}<1$ and the payoff function $f=(f_1,f_2)$ has finite range, there is a payoff pair $g\in\dR^2$ in the range of $f$ such that
 \[\prob_{s,\pi^{\rho,\delta}_1,\pi^{\rho,\delta}_2}\big(\theta_C^{exit} = \infty \hbox{ and } f(r) = g\big) \,>\,0,\]
 where $r$ denotes the random variable for the run, taking value in $\mathscr{R}$. 
 It follows from L\'{e}vy's zero-one law that there exists a history $h^*\in H$ that 
starts at state~$s$ and satisfies the following conditions:
\begin{itemize}
    \item[(a)] $h^*$ remains in $C$;
    \item[(b)] in the subgame starting at $h^*$, with high probability under $(\pi^{\rho,\delta}_1,\pi^{\rho,\delta}_2)$, the run always remains in $C$ and the payoff is equal to $g$:
    \begin{equation}
    \label{equ:36.1}
    \prob_{h^*,\pi^{\rho,\delta}_1,\pi^{\rho,\delta}_2}\big(\theta_C^{exit} = \infty \hbox{ and } f(r) = g\big) \,>\,1-\mu.
    \end{equation}
    
\end{itemize}

Denote by $s^*=s_{h^*}$ the final state of the history $h^*$. 
By condition (a) for $h^*$, we have $s^*\in C$. 
We argue that for each player $i\in\{1,2\}$,
\begin{equation}
\label{indrat}
g_i\,\geq\, v_i(C).
\end{equation}
Indeed, suppose by way of contradiction that $g_i< v_i(C)$. 
Then, by property (b) for $h^*$ and \Eqref{choice-ep}, we have $\E_{h^*,\pi^{\rho,\delta}_1,\pi^{\rho,\delta}_2}[f_i]\leq (1-\mu)\cdot g_i+\mu\cdot M<v_i(C)-\delta$. This however contradicts \Eqref{equ:6.44}.
\medskip

\noindent\textbf{Step 3.2.} Defining tests to detect deviations. 

Let $\widetilde\sigma$ denote the continuation of the strategy pair $(\pi^{\rho,\delta}_1,\pi^{\rho,\delta}_2)$ at history $h^*$, that is, $\widetilde\sigma=(\pi^{\rho,\delta}_{1,h^*},\pi^{\rho,\delta}_{2,h^*})$.
We will show that $\widetilde\sigma$, once supplied with tests and threats of punishment,
is an $\ep$-equilibrium, and thereby complete the proof of Step 3.

There are three ways in which a player, say, player~$j$, can deviate from $\widetilde\sigma$:
\begin{enumerate}
\item[D0)] 
Player~$j$ may play an action that she is not supposed to play, that is, 
at a history $h$ that remains in $C$, she may play an action $a_j$ satisfying $\widetilde\sigma_j(a_j \mid h) = 0$.
Such a deviation is identified immediately, and since $C$ is blocked to player~$j$, 
it is not profitable as soon as it triggers punishment at the maxmin level.
\item[D1)]
Player~$j$ may play in such a way that (a) the play remains in $C$, yet (b) her payoff is higher than $g_j$.
We will see that such a deviation can be detected in finite time,
in a way that a false detection of deviation occurs with small probability.
\item[D2)]
Player~$j$ may play in such a way that she increases the probability to leave $C$.
To deter such a deviation,
player~$i$ will calculate at each stage $t$ a certain quantity,
which roughly measures the probability that the play leaves $C$ if the players play the actions given by the actual history.
Under $\widetilde\sigma$, with high probability this quantity is small throughout the play.
Once this quantity is large, player~$j$ will be punished.
Since at stage $t$ player~$i$ plays actions not in $B_i^{\rho,\delta}(h^t)$,
this quantity cannot increase significantly in a single stage,
which will imply that player~$j$ cannot make the play leave $C$ with high probability without being detected and punished.
\end{enumerate}

While detection of deviations of type~(D0) is standard,
detection of deviations of types~(D1) and~(D2) is new as far as we know.

\medskip
\noindent\textbf{Identifying deviations of type~(D0)}

\medskip

Let $\theta_0$ denote the stopping time that indicates that at the previous stage the history remained in $C$ but an action was played that has probability 0 under $\widetilde\sigma$:
\[\theta_0\,:=\,\min\big\{t\geq 2\colon h^{t-1}\text{ remains in }C\text{\ and\ }\widetilde\sigma_k(a_k^{t-1}\mid h^{t-1})=0\text{\ for some\ }k\in\{1,2\}\big\},\]
with the convention that the minimum of the empty set is infinity. 
Thus, $\theta_0$ identifies the stage right after a deviation of type~(D0).

\medskip
\noindent\textbf{Identifying deviations of type~(D1)}

\medskip

Since $\mathscr{R}$ is a Polish space, the measure $\prob_{s^*,\widetilde\sigma}$ is regular. 
By property (b) for $h^*$,
there exists a closed set
\begin{equation}
\label{defR}
R \,\subseteq\, \{\theta_C^{exit}= \infty \hbox{ and } f(r) = g\} 
\end{equation}
such that 
\begin{equation}
\label{propR}
\prob_{s^*,\widetilde\sigma}(R) \,\geq\, 1-\mu.
\end{equation} 

The complement $\mathscr{R}\setminus R$ of the set $R$ is an open set. Consider all histories $h\in H$ for which the cylinder set $\calC(h)=\{r \in \mathscr{R}\colon h \prec r\}$ is included in $\mathscr{R}\setminus R$, 
i.e., $\calC(h)\subseteq \mathscr{R}\setminus R$, and let $H_{R}$ denote the set of minimal
histories among these for the extends-relation $\prec$. Thus,
\[ \mathscr{R}\setminus R\, =\, \bigcup_{h \in H_{R}} \calC(h).\]
By definition, the set $H_{R}$ does not contain two histories $h$ and $h'$ such that $h'$ strictly extends $h$, i.e., $h\prec h'$. 
Also, there is no history $h\notin H_R$ such that each extension of $h$ with one stage, i.e., each history $h'$ with $h\prec h'$ and $\len(h')=\len(h)+1$, is contained in $H_R$.
Intuitively, a history in $H_{R}$ arises as soon as the play leaves the set $C$, 
or if the history remains in $C$ but it becomes unlikely that the payoff $g$ will be realized. For deviations of type (D1), we are interested in the latter.

Let $\theta_1$ be the stopping time for the first stage in which the history remains in $C$ and it belongs to $H_R$:
\[ \theta_1\,:=\, \min\big\{t \in \dN \colon h^t\text{ remains in }C\text{\ and\ }h^t \in H_{R}\big\}. \]
The stopping time $\theta_1$ will be used to identify deviations of type~(D1).

The following property of $\theta_1$ will be useful: if for a run $r\in\mathscr{R}$ we have $\theta_1(r)=\infty$, then either $r$ leaves the set $C$ or $f(r)=g$. We argue by contradiction.
Suppose that $r$ remains in $C$ and $f(r)\neq g$.
We will show that $\theta_1(r)<\infty$. Since $f(r)\neq g$, we have $r\notin R$ by \Eqref{defR}. This means that there is a history $h\prec r$ such that $h\in H_R$. Since $r$ remains in $C$, so does $h$. But then $\theta_1(r)= \length(h)<\infty$ by the definition of $\theta_1$.  
\medskip

\noindent\textbf{Identifying deviations of type~(D2)}\medskip

For an initial state $s^1\in C$, let $H_{s^1}$ denote the set of histories that start in state $s^1$. For each player $i\in\{1,2\}$ and strategy $\sigma_i$ of player $i$, we will define a function $\zeta_{i,\sigma_i}: H_{s^1} \to [0,1]$,
which will in some sense measure the probability that the play could have left $C$ along the given history.  

Consider a player $i\in\{1,2\}$, a strategy $\sigma_i$ of player $i$, and a history  $h=(s^1,a_1^1,a_2^1,\ldots,\allowbreak s^{t-1},a_1^{t-1},a_2^{t-1},s^{t})$. Let $k_h$ denote the number of stages in which the play along $h$ stays in $C$: if the history $h$ remains in $C$ then $k_h=t$, and otherwise $k_h<t$ is such that $s^1,s^2,\ldots,s^{k_h}\in C$ yet $s^{k_h+1}\notin C$. Define 
\begin{equation}
\label{equ:zeta}
\zeta_{i,\sigma_i}(h) \,:=\, \sum_{\ell=1}^{k_h}p((S \setminus C) \mid s^\ell,\sigma_i(s^1,a_1^1,a_2^1,\ldots,s^{\ell}),a^\ell_j).
\end{equation}
The quantity $\zeta_{i,\sigma_i}(h)$ should be thought of as a fictitious probability; intuitively, it expresses the probability of the counter-factual event that along $h$, although the play did not leave the set $C$ in the first $k_h$ stages, it could have left it, when player $i$ follows $\sigma_i$ and the other player, player $j$, follows her actions in $h$. 
The quantity $\zeta_{i,\sigma_i}(h)$ takes the realized actions of player $j$ into account and not her strategy, so that we can use it to detect deviations of type (D2) by player $j$.

The relation between $\zeta_{i,\sigma_i}(\cdot)$ and the probability to leave the set $C$ is captured by the following property:
for every bounded stopping time $\theta:\mathscr{R}\to\dN$,
and every strategy pair $\sigma \in \Sigma_1 \times \Sigma_2$,  
\begin{equation}
\label{equ:zeta:e} 
\E_{s^1,\sigma}[\zeta_{i,\sigma_i}(h^\theta)] \,=\, \prob_{s^1,\sigma}(\theta_C^{exit} < \theta).
\end{equation}
\Eqref{equ:zeta:e} can be verified by induction on  the stopping time $\theta$. 
Indeed, 
recalling that $s^1 \in C$, 
\Eqref{equ:zeta:e} holds for $\theta\equiv 1$ (i.e., the function being constant 1), as in this case both sides of the equation are equal to 0. 
Suppose that $\theta\geq 2$ and we have already proven \Eqref{equ:zeta:e}  
for all 
strategy pairs $\sigma \in \Sigma_1 \times \Sigma_2$ and all
stopping times $\theta'<\theta$ (i.e., $\theta'(r)\leq\theta(r)$ for all $r\in\mathscr{R}$, and with a strict inequality for at least one $r\in\mathscr{R}$). 
In particular, \Eqref{equ:zeta:e} holds for $\theta$
when the initial history is $(s^1,a,s)$ for some $a \in A$ and $s \in C$. 
Then 
\begin{align*}
\prob_{s^1,\sigma}(\theta_C^{exit} < \theta)\,&=\,p\bigl((S\setminus C)\mid s^1,\sigma(s^1)\bigr)\\
&\qquad+\sum_{a\in A,s\in C}\sigma(a)\cdot p(s\mid s^1,a)\cdot \prob_{s^1,\sigma_{(s^1,a,s)}}(\theta_C^{exit} < \theta)\\
&=\,p\bigl((S\setminus C)\mid s^1,\sigma(s^1)\bigr)\\
&\qquad+\sum_{a\in A,s\in C}\sigma(a)\cdot p(s\mid s^1,a)\cdot \E_{s^1,\sigma_{(s^1,a,s)}}[\zeta_{i,\sigma_{i,(s^1,a,s)}}(h^{\theta})]\\
&=\,\E_{s^1,\sigma}[\zeta_{i,\sigma_i}(h^{\theta})].
\end{align*}
This proves \Eqref{equ:zeta:e}, as desired.


To detect deviations of type (D2), 
we will be interested in the quantity $\zeta_{i,\widetilde \sigma_i}(h)$ when the initial state is $s^*$ and player~$i$ follows the strategy $\widetilde\sigma_i$,
which was defined at the beginning of Step~3.2. 
In fact, a deviation of player~$j$ will be announced as soon as $\zeta_{i,\widetilde\sigma_i}(h)$ becomes high.
To ensure that this test is proper, we will prove the following:
(i) Under $\widetilde\sigma$, the probability that $\zeta_{i,\widetilde\sigma_i}(h)$ is high, is low (cf.~\Eqref{equ:51.1}). 
By Markov inequality this will imply that a false detection of deviation is low. (ii) For any profitable deviation of player~$j$, the increase in $\zeta_{i,\widetilde\sigma_i}$ in any single stage is low (cf.~\Eqref{smalljump}).  This will imply that in any single stage, player~$j$ cannot significantly increase the probability that the play leaves $C$,
and so by punishing player~$j$ 
once $\zeta_{i,\widetilde\sigma_i}(h)$ surpasses some properly chosen threshold, 
we can ensure that her payoff is not much higher than her maxmin value in $C$.

The next inequality shows that if the initial state is $s^*$ and the players follow
the strategy pair $\widetilde\sigma$
defined at the beginning of Step~3.2, 
then with high probability, $\zeta_{i,\widetilde\sigma_i}(\cdot)$ remains small during the game.
For every bounded stopping time $\theta$ we have
\begin{eqnarray}
\nonumber 
\prob_{s^*,\widetilde\sigma} \left( \zeta_{i,\widetilde\sigma_i}(h^\theta) \geq \sqrt\mu\right) 
&\,\leq\,& 
\frac{\E_{s^*,\widetilde\sigma}[\zeta_{i,\widetilde\sigma_i}(h^\theta)
]}{\sqrt\mu} \,=\,\frac{\prob_{s^*,\widetilde\sigma}(\theta_C^{exit} < \theta)}{\sqrt\mu} \\
&\,\leq\,& \frac{\prob_{s^*,\widetilde\sigma}(\theta_C^{exit} < \infty)}{\sqrt\mu} \,\leq\, \sqrt\mu,
\label{equ:51.1}
\end{eqnarray}
where the first inequality is by Markov's inequality, the equality is by \Eqref{equ:zeta:e}, and the last inequality follows as \Eqref{equ:36.1} implies that $\prob_{s^*,\widetilde\sigma}(\theta_C^{exit} < \infty)=\prob_{h^*,\pi_1^{\rho,\delta},\pi_2^{\rho,\delta}}(\theta_C^{exit} < \infty)\leq\mu$.

Let $\theta_2$ be the stopping time that indicates the first stage in which $\zeta_i(h)$ exceeds $\sqrt\mu$:
\[ \theta_2\, :=\, \min\bigl\{ t \in \dN \colon \zeta_{i,\widetilde\sigma_i}(h^t) > \sqrt\mu\bigr\}. \]
The stopping time $\theta_2$ will be used to identify deviations of type~(D2).

Note that if $\theta_2=t$ and $\theta_0>t$, then by \Eqref{equ:201} we have
\begin{equation}
\label{smalljump}
\zeta_{i,\widetilde\sigma_i}(h^t)\,\leq\, \sqrt\mu+\rho+\alpha\kappa^\rho;
\end{equation}
that is, as long as $\theta_0$ does not occur, $\zeta_{i,\widetilde\sigma_i}(\cdot)$ can increase in each stage by at most $\rho+\alpha\kappa^\rho$.\medskip

\noindent\textbf{Step 3.3.} Definition of a strategy pair $\sigma^*=(\sigma^*_1,\sigma^*_2)$. 

Let $\sigma^*=(\sigma_1^*,\sigma_2^*)$ be 
the strategy pair that follows $\widetilde\sigma$ 
until time 
\[ \theta_{pun} := \min\{\theta_0, \theta_1,\theta_2\}. \]
At stage $\theta_{pun}$,
each player $i\in\{1,2\}$ switches to a punishment strategy against the other player, 
that is, to a strategy that guarantees that player $j$'s payoff is at most $v_j(s^{\theta_{pun}})+\mu$
(recall that  $s^{\theta_{pun}}$ is the state at stage $\theta_{pun}$).

We note that by \Eqref{propR} and \Eqref{equ:51.1},
$\prob_{s^*,\sigma^*}(\theta_0 = \infty) = 1$,
$\prob_{s^*,\sigma^*}(\theta_1 = \infty) \geq 1-\mu$,
$\prob_{s^*,\sigma^*}(\theta_2 = \infty) \geq 1-\sqrt\mu$.
It follows that
under $\sigma^*$, each player $i$'s expected payoff satisfies 
\begin{align}
\nonumber
\E_{s^*,\sigma^*}[f_i]\,&\geq\,\prob_{s^*,\sigma^*}(\theta_{pun} = \infty)\cdot g_i -(1-\prob_{s^*,\sigma^*}(\theta_{pun}= \infty))\cdot M\\
\nonumber
&\geq\,(1-\sqrt\mu-\mu)\cdot g_i-(\sqrt\mu+\mu)\cdot M \\
&=\, g_i-(\sqrt\mu+\mu)\cdot(g_i+M)\,\geq\, g_i - \frac{\ep}{2},\label{payoff-atleast2}
\end{align}
where the last inequality holds by \Eqref{choice-mu}.


\medskip

\noindent\textbf{Step 3.4.} $\sigma^*$ is an $\ep$-equilibrium for the initial state $s^*$.

We will show that neither player can profit more than $\ep$ by deviating individually from $\sigma^*$.
Fix a player $i\in\{1,2\}$ and a strategy $\sigma_i$ of player~$i$.
Define the following four exhaustive and pairwise disjoint events,
which indicate which test has failed:
\begin{eqnarray*}
A_0 &:=& \{ \theta_{pun} = \theta_0 < \infty\},\\
A_1 &:=& \{ \theta_{pun} = \theta_1 < \theta_0\},\\
A_2 &:=& \{ \theta_{pun} = \theta_2 < \min\{\theta_0,\theta_1\}\},\\
A_3 &:=& (A_0 \cup A_1 \cup A_2)^c\,=\,\{\theta_{pun}=\infty\}.
\end{eqnarray*}
We examine player $i$'s expected payoff under $(\sigma_i,\sigma_j^*)$ conditioned on each of these events separately, and prove that for each $k=0,1,2$, if $\prob_{s^*,\sigma_i,\sigma_j^*}(A_k) > 0$ then
\begin{equation}
\label{ineq-suff}
\E_{s^*,\sigma_i,\sigma_j^*}[f_i\mid A_k]\,\leq\,v_i(C)+\frac{\ep}{2}
\end{equation}
and if $\prob_{s^*,\sigma_i,\sigma_j^*}(A_3) > 0$ then
\begin{equation}
\label{ineq-suff1}
\P_{s^*,\sigma_i,\sigma_j^*}(A_3)\cdot \E_{s^*,\sigma_i,\sigma_j^*}[f_i\mid A_3]\,\leq\,\P_{s^*,\sigma_i,\sigma_j^*}(A_3)\cdot g_i+\frac{\ep}{2}.
\end{equation}

Since
\[\E_{s^*,\sigma_i,\sigma_j^*}[f_i]\,=\,\sum_{\ell=0}^3\P_{s^*,\sigma_i,\sigma_j^*}(A_\ell)\cdot \E_{s^*,\sigma_i,\sigma_j^*}[f_i\mid A_\ell],\]
\Eqref{indrat},
\Eqref{ineq-suff}, and \Eqref{ineq-suff1} will imply that $\E_{s^*,\sigma_i,\sigma_j^*}[f_i]\leq g_i+\frac{\ep}{2}$.
Hence, by  \Eqref{payoff-atleast2}, we will obtain  $\E_{s^*,\sigma^*}[f_i]\geq \E_{s^*,\sigma_i,\sigma_j^*}[f_i]-\ep$, which will complete the proof of Step 3.4, and thereby the proof of Step 3 as well.\smallskip

\noindent\textbf{Conditioning on $A_0$.} 
To show that \Eqref{ineq-suff} holds for $A_0$, 
suppose that player $i$ deviates at stage $\theta_{pun}-1=\theta_0-1$ from $\sigma_i^*$ by choosing an action $a_i\in A_i$ with $\sigma_i(a_i\mid h)=0$, where for simplicity 
we denote by $h = h^{\theta_{pun}-1}$ 
the history at stage $\theta_{pun}-1$. Since $\sigma_j^*$ switches from $\sigma_j$ to a punishment strategy at stage $\theta_{pun}$, 
to prove \Eqref{ineq-suff} for $A_0$ we need to verify that
\begin{equation}
\label{ineq-A0}
\E[v_i+\mu \mid s_h,a_i,\sigma_j^*(h)] \,\leq\,
v_i(C)+\frac{\ep}{2}.
\end{equation}
By the definitions of $\sigma^*_j$ and $\widetilde\sigma_j$ in Steps 3.3 and 3.2, respectively, we have 
\begin{equation}
\label{eq-sigmas}    
\sigma^*_j(h)\,=\,\widetilde\sigma_j(h)\,=\,\pi_j^{\rho,\delta}(h^*h);
\end{equation} 
here as usual, $h^*h$ denotes the concatenation of $h^*$ with $h$. The histories $h^*h$ and $h$ have the same final state, i.e., $s_{h^*h}=s_h$, and since $\theta_{pun}=\theta_0$, the history $h$ remains in $C$, and in particular, $s_h\in C$. We show that \Eqref{ineq-A0} holds by distinguishing between two cases, depending on the action $a_i$.

Suppose first that $a_i \in B_i^{\rho,\delta}(h^*h)$. By \Eqref{equ:5.1}, there is $y_j(s_h) \in Y_j(s_h)$ such that $d\left(\sigma^\delta_j(h^*h),y_j(s_h)\right) < \kappa^\rho$. 
Similarly to \Eqref{equ:15.1}, 
\begin{equation}
\label{closetorho}
 p\bigl( (S \setminus C) \mid s_h,a_i,y_j(s_h)) \,\geq\, \rho-\alpha\kappa^\rho\,>\,0.
\end{equation}
This implies that $(s_h,a_i,y_j(s_h))\in \mathscr{E}_i^{\overline{\rho}}(C,Y)$ for $\overline{\rho}=\rho-\alpha\kappa^\rho$.
We then have
\begin{eqnarray}
\E[v_i+\mu \mid s_h,a_i,\sigma_j^*(h)] &=& 
\E[v_i+\mu \mid s_h,a_i,\pi^{\rho,\delta}_j(h^*h)]\label{long11}\\
\label{long4}&\leq&
\E[v_i+\mu \mid s_h,a_i,\sigma^{\delta}_j(h^*h)] + \alpha\kappa^\rho\cdot 2 M\\ 
\label{equ:long-e1}
&\leq&
\E[v_i+\mu \mid s_h,a_i,y_j(s_h)] + \alpha\kappa^\rho\cdot 4M\\
\nonumber
&=&
\E[v_i \mid s_h,a_i,y_j(s_h)] +\mu+ \alpha\kappa^\rho\cdot 4M \\
\label{equ:long:31}
&\leq& v_i(s_h) - \eta^{\overline{\rho}} +\mu+ \alpha\kappa^\rho\cdot 4M\\
\label{equ:long:311}
&=& v_i(C) - \eta^{\overline{\rho}} +\mu+ \alpha\kappa^\rho\cdot 4M\\
\label{long-last}
&\leq& v_i(C)+\frac{\ep}{2},
\end{eqnarray}
where
\Eqref{long11} holds by \Eqref{eq-sigmas}; Eq.~\eqref{long4} holds by Eq.~\eqref{equ:6.3};
Eq.~\eqref{equ:long-e1} holds by the choice of $y_j(s_h)$;
Eq.~\eqref{equ:long:31} holds since $(s_h,a_i,y_j(s_h))\in \mathscr{E}_i^{\overline{\rho}}(C,Y)$ and
by \Eqref{choice-eta}; Eq.~\eqref{equ:long:311} holds as $v_i$ is a constant over $C$ by assumption; and \Eqref{long-last} holds by \Eqref{bigineq}. 
Thus, \Eqref{ineq-A0} is proven in this case.

Suppose now that $a_i \in A_i\setminus B_i^{\rho,\delta}(h^*h)$. 
By Eqs.~\eqref{equ:201} and \eqref{eq-sigmas}, under $(a_i,\sigma_j^*(h))$ the next state is in $C$ with probability at least $1-\rho-\alpha\kappa^\rho$. Hence, by applying \Eqref{smallrho} we have
\begin{align*}
\E[v_i+\mu \mid s_h,a_i,\sigma_j^*(h)] \,&\leq\,
(1-\rho-\alpha\kappa^\rho)\cdot v_i(C)+(\rho+\alpha\kappa^\rho) \cdot M+\mu\\
&\leq\,v_i(C)+\frac{\ep}{2}.
\end{align*}
Thus, \Eqref{ineq-A0} is proven in this case as well.

\smallskip

\noindent\textbf{Conditioning on $A_1$.} 
We show that \Eqref{ineq-suff} holds for $A_1$.
We have  
\[
\E_{s^*,\sigma_i,\sigma_j^*}[f_i\mid A_1]\,\leq\,v_i(C)+\mu\,\leq\,v_i(C)+\frac{\ep}{2}.
\]
Indeed, the first inequality holds because when $A_1$ occurs, the history 
$h^{\theta_{pun}}$ at stage $\theta_{pun}=\theta_1$ is in $C$, 
and 
at $h^{\theta_{pun}}$ the strategy
$\sigma_j^*$ switches to a punishment strategy. The second inequality holds since $\mu\leq \ep/2$; cf.~the beginning of Step 3.\medskip 

\noindent\textbf{Conditioning on $A_2$.} 
We show that \Eqref{ineq-suff} holds for $A_2$.
We have 
\begin{align*}
\E_{s^*,\sigma_i,\sigma_j^*}[f_i\mid A_2] \,&\leq\,(1-\sqrt\mu-\rho-\alpha\kappa^\rho)\cdot (v_i(C)+\mu)+(\sqrt\mu+\rho+\alpha\kappa^\rho)\cdot M\\
&\leq\,v_i(C)+\frac{\ep}{2}.
\end{align*}
Indeed, the first inequality holds, because when $A_2$ occurs, by \Eqref{smalljump}, the state at stage $\theta_{pun}=\theta_2$ is in $C$ with probability at least $1-\sqrt\mu-\rho-\alpha\kappa^\rho$. The second inequality holds by \Eqref{smallrho}. \medskip

\noindent\textbf{Conditioning on $A_3$.} 
We show that \Eqref{ineq-suff1} holds.
On the event $A_3$ we have $\theta_1=\infty$. Hence, as mentioned above, either the play leaves the set $C$ or the payoff is equal to $g$. Partition $A_3$ into two sets: \[A_3^1\,:=\,\{r\in A_3\colon r\text{ leaves }C\}\quad\text{and}\quad A_3^2\,:=\,\{r\in A_3\colon r\text{ remains in }C\text{ and }f(r)= g\}.\]
On $A_3$ we have $\theta_2=\infty$. Hence, \Eqref{equ:zeta:e} and \Eqref{choice-mu} imply that 
\[\P_{s^*,\sigma_i,\sigma_j^*}(A_3^1)\,\leq\,\sqrt\mu\,\leq\,\frac{\ep}{4M}.\]
Let $\mathbf{1}_W:\mathscr{R}\to\{0,1\}$ denote the characteristic function of a set $W\subseteq\mathscr{R}$. We have 
\begin{align*}\P_{s^*,\sigma_i,\sigma_j^*}(A_3)\cdot \E_{s^*,\sigma_i,\sigma_j^*}[f_i\mid A_3]\,&=\,\E_{s^*,\sigma_i,\sigma_j^*}[f_i\cdot \mathbf{1}_{A_3}]\\
&=\,\E_{s^*,\sigma_i,\sigma_j^*}[f_i\cdot \mathbf{1}_{A^1_3}]+\E_{s^*,\sigma_i,\sigma_j^*}[f_i\cdot \mathbf{1}_{A^2_3}]\\
&\leq\,\P_{s^*,\sigma_i,\sigma_j^*}(A^1_3)\cdot M+\P_{s^*,\sigma_i,\sigma_j^*}(A^2_3)\cdot g_i\\
&=\,\P_{s^*,\sigma_i,\sigma_j^*}(A^1_3)\cdot M+\Big(\P_{s^*,\sigma_i,\sigma_j^*}(A_3)-\P_{s^*,\sigma_i,\sigma_j^*}(A^1_3)\Big)\cdot g_i\\
&\leq\,\P_{s^*,\sigma_i,\sigma_j^*}(A_3)\cdot g_i+\P_{s^*,\sigma_i,\sigma_j^*}(A^1_3)\cdot 2M\\
&\leq\,\P_{s^*,\sigma_i,\sigma_j^*}(A_3)\cdot g_i+\frac{\ep}{2},
\end{align*}
which proves \Eqref{ineq-suff1}.
\bigskip

%
To complete the proof of Lemma~\ref{lemma:alternatives} we will prove the following.

\bigskip
\noindent\textbf{Step 4:} The set $C$ is $Y$-jointly controlled.

The idea of the proof of Step 4 is as follows. For every $\rho>0$ and $\delta\in(0,\delta^\rho)$, we will define the mapping $\lambda^{\rho,\delta}$ on the
triplets $(s,a_1,a_2)\in C\times A$, where $\lambda^{\rho,\delta}(s,a_1,a_2)$ is the probability under $(s^1,\pi_1^{\rho,\delta},\pi_2^{\rho,\delta})$ that the play will eventually leave the set $C$ from state $s$ through the action pair $(a_1,a_2)$,
where $s^1$ is some fixed state in $C$.
By taking an accumulation point of $\lambda^{\rho,\delta}$, as $\rho$ and $\delta$ go to 0, we obtain a probability distribution that, as we show, only places positive probability on triplets $(s,a_1,a_2)$ that are $Y$-joint exits from $C$. 
A variation of 
this probability distribution will be the one that we need to show that $C$ is $Y$-jointly controlled. 

Define the random variable $t_*$ by
\[ t_* = \theta^{exit}_C - 1.\]
If the play leaves $C$ (that is, $\theta^{exit}_C<\infty$) then $t_*$ is the stage when this happens; if the play does not leave $C$, then $t_*=\infty$. 
Note that $t_*$ is not measurable w.r.t.~the information that the players have at the end of stage $t_*$, because the transition may be random.

Fix an initial state $s^1\in C$. 
Let $L_C$ be the set of triplets $(s,a_1,a_2)\in C\times A$ such that
$p\bigl((S \setminus C) \mid s,a_1,a_2\bigr) > 0$.

\medskip
\noindent\textbf{Step 4.1:} Defining probability distributions $\lambda$ and $\widetilde\lambda$ over $L_C$.

For every $\rho>0$, $\delta\in(0,\delta^\rho)$, and $(s,a_1,a_2)\in L_C$,
define
\[ \lambda^{\rho,\delta}(s,a_1,a_2)\, :=\, \prob_{s^1,\pi_1^{\rho,\delta},\pi_2^{\rho,\delta}}( s^{t_*} = s, a_1^{t_*} = a_1, a_2^{t^*} = a_2). \]
By the definition of $\nu_{s^1}^{\rho,\delta}$, cf.~\Eqref{def-nu}, we have
\begin{equation}
\label{lambda-nu}
\lambda^{\rho,\delta}(L_C)\,=\,\prob_{s^1,\pi_i^{\rho,\delta},\pi_j^{\rho,\delta}}(\theta_C^{exit}<\infty)\,=\,\nu_{s^1}^{\rho,\delta}.
\end{equation}
For every $\rho>0$, let $\lambda^\rho$ be an accumulation point of $(\lambda^{\rho,\delta})_{\delta \in(0,\delta^\rho)}$ as $\delta$ goes to 0. 
Let $\lambda$ be an accumulation point of $(\lambda^\rho)_{\rho>0}$ as $\rho$ goes to 0.

For every $\rho>0$, $\delta\in(0,\delta^\rho)$, and $(s,a_1,a_2)\in L_C$, define
\[ \widetilde\lambda^{\rho,\delta}(s,a_1,a_2) \,:=\, 
\frac{ \frac{\lambda^{\rho,\delta}(s,a_1,a_2)}{p\bigl((S \setminus C) \mid s,a_1,a_2\bigr)}}
{\sum\left\{
\frac{\lambda^{\rho,\delta}(s',a'_1,a'_2)}{p\bigl((S \setminus C) \mid s',a'_1,a'_2\bigr)} : (s',a'_1,a'_2) \in L_C\right\}}.
\]
Note that $p\bigl((S \setminus C) \mid s,a_1,a_2\bigr) > 0$ for every $(s,a_1,a_2) \in L_C$,
hence $\widetilde\lambda^{\rho,\delta}$ is well defined,
and by definition it is a probability distribution.

The quantity $\lambda^{\rho,\delta}(s,a_1,a_2)$ is the probability that the triplet $(s,a_1,a_2)$ is played just before the play leaves $C$;
the quantity $\widetilde\lambda^{\rho,\delta}(s,a_1,a_2)$ is the frequency of times needed to play the triplet $(s,a_1,a_2)$ to ensure that the probability that the triplet $(s,a_1,a_2)$ is played just before the play leaves $C$ is exactly $\lambda^{\rho,\delta}(s,a_1,a_2)$.
\medskip 

\noindent\textbf{Step 4.2:} $\lambda$ is a probability distribution supported by the set $\mathscr{E}_{12}(C,Y)$ of $Y$-joint exits from $C$. 

We have 
\begin{equation}
\label{limit-lambda}
\lambda(L_C)\,=\,\lim_{\rho\to 0}\big(\lim_{\delta\to 0} \lambda^{\rho,\delta}(L_C)\big)\,=\,\lim_{\rho\to 0}\big(\lim_{\delta\to 0} \nu_{s^1}^{\rho,\delta}\big)\,=\,\nu_{s^1}\,=\,1,
\end{equation}
where the second equality follows by \Eqref{lambda-nu}, and the last equality is due to Step 3. Consequently, $\lambda$ is a probability distribution over the set $L_C$. It remains to prove that if $(s,a_1,a_2)\in L_C$ satisfies $\lambda(s,a_1,a_2)>0$, then $(s,a_1,a_2)\in \mathscr{E}_{12}(C,Y)$.

Let $(s,a_1,a_2)\in L_C$ such that $\lambda(s,a_1,a_2)>0$. 
For convenience, assume
that $\lambda^\rho(s,a_1,a_2)>0$ for each $\rho>0$ and $\lambda^{\rho,\delta}(s,a_1,a_2)>0$ for each $\rho>0$ and $\delta \in(0,\delta^\rho)$.

For each $\rho>0$ and $\delta \in(0,\delta^\rho)$, as $\lambda^{\rho,\delta}(s,a_1,a_2)>0$, there is a history $h^{\rho,\delta} \in H$ that remains in $C$ such that (a) $s_{h^{\rho,\delta}} = s$,
(b) $\pi^{\rho,\delta}_1(a_1 \mid h^{\rho,\delta}) > 0$,
and (c) $\pi^{\rho,\delta}_2(a_2 \mid h^{\rho,\delta}) > 0$. By \Eqref{equ:6.2}, 
$a_i \not\in B^{\rho,\delta}_i(h^{\rho,\delta})$ for each $i\in\{1,2\}$, and hence by \Eqref{equ:B} we obtain
\begin{equation}
\label{equ:901}
p\bigl( (S \setminus C) \mid s,a_i,\sigma^\delta_j(h^{\rho,\delta})\bigr) \,<\,\rho.
\end{equation}

For each player $i\in\{1,2\}$ and each $\rho>0$, let $y_i^\rho(s)$ be an accumulation point of $(\sigma^\delta_j(h^{\rho,\delta}))_{\delta \in(0,\delta^\rho)}$ as $\delta$ goes to 0. 
Since $Y_i(s)$ is compact, $y_i^\rho(s)\in Y_i(s)$, and by \Eqref{equ:901},
\[p\bigl( (S \setminus C) \mid s,a_i,y_j^\rho(s)\bigr) \,\leq\, \rho.\] 
Let $y_i(s)$ be an accumulation point of $(y_i^\rho(s))_{\rho>0}$ as $\rho$ goes to 0. 
We have $y_i(s)\in Y_i(s)$ because $Y_i(s)$ is compact, and moreover for each player $i\in\{1,2\}$,
\begin{equation}
\label{equ:902}
p\bigl( (S \setminus C) \mid s,a_i,y_j(s)\bigr)\,=\,0.
\end{equation}
Since \Eqref{equ:902} holds for $i\in\{1,2\}$, it follows that
$(s,a_1,a_2)\in \mathscr{E}_{12}(C,Y)$, as desired.\medskip

\noindent\textbf{Step 4.3:} The set $C$ is $Y$-jointly controlled. 

By Step 4.2, $\lambda$ 
can be viewed as
a probability distribution over $\mathscr{E}_{12}(C,Y)$.
We will show that $C$ is $Y$-jointly controlled with the probability distribution ${\widetilde\lambda} \in \Delta(\mathscr{E}_{12}(C,Y))$,
defined for each $(s,a_1,a_2) \in \mathscr{E}_{12}(C,Y)$ by  
\[ \widetilde\lambda(s,a_1,a_2) := 
\frac{ \frac{\lambda(s,a_1,a_2)}{p\bigl((S \setminus C) \mid s,a_1,a_2\bigr)}}
{\sum\left\{
\frac{\lambda(s',a'_1,a'_2)}{p\bigl((S \setminus C) \mid s',a'_1,a'_2\bigr)} : (s',a'_1,a'_2) \in \mathscr{E}_{12}(C,Y)\right\}}.
\]
Note that $\widetilde\lambda(s,a_1,a_2)=\lim_{\rho\to 0}\big(\lim_{\delta\to 0} \widetilde\lambda^{\rho,\delta}(s,a_1,a_2)\big)$ for each $(s,a_1,a_2) \in \mathscr{E}_{12}(C,Y)$.  
To show that $C$ is $Y$-jointly controlled with $\widetilde\lambda$, we need to verify that for each player $i\in\{1,2\}$ we have
\begin{equation}\label{toshowjointly}
H_i(C,Y)\,\leq\,\sum_{(s,a_1,a_2)\in\mathscr{E}_{12}(C,Y)}{\widetilde\lambda}(s,a_1,a_2)\cdot \E[{v}_i \mid s,a_1,a_2].
\end{equation}
Fix $i\in\{1,2\}$.
For each $n\in\dN$, let $\xi_n$ be the random variable defined by $\xi_n:=\min\{\theta_C^{exit},n\}$. 
By Property 2 of Step 2 (with $h$ being the history at stage 1 and thus $s_h=s^1$), for each $\rho>0$ and each $\delta\in(0,\delta^\rho)$,
\begin{align}
\nonumber
v_i(C)-\delta\,&\leq\,\E_{s^1,\pi_i^{\rho,\delta},\pi_j^{\rho,\delta}}[v_i(s^{\xi_n})]\\
\label{equ:4:1}
&=\,\prob_{s^1,\pi_i^{\rho,\delta},\pi_j^{\rho,\delta}}(\theta_C^{exit}\leq n)\cdot \E_{s^1,\pi_i^{\rho,\delta},\pi_j^{\rho,\delta}}[v_i(s^{\theta_C^{exit}})\mid \theta_C^{exit}\leq n]\\
&\quad+\prob_{s^1,\pi_i^{\rho,\delta},\pi_j^{\rho,\delta}}(\theta_C^{exit}> n)\cdot v _i(C).
\nonumber
\end{align}
For each $(s,a_1,a_2)\in L_C$, let $\ell(s,a_1,a_2)$ denote the expected maxmin value at the first state reached once the play leaves $C$, 
conditionally on leaving $C$ through $(s,a_1,a_2)$:
\[\ell(s,a_1,a_2)\,=\,\sum_{s'\in S\setminus C}v_i(s')\cdot \frac{p(s'\mid s,a_1,a_2)}{p((S\setminus C)\mid s,a_1,a_2)}.\]
Then,
\begin{align*}\E_{s^1,\pi_i^{\rho,\delta},\pi_j^{\rho,\delta}}[v_i(s^{\theta_C^{exit}})\mid \theta_C^{exit}<\infty]\,&=\,\frac{\sum_{(s,a_1,a_2)\in L_C} \lambda^{\rho,\delta}(s,a_1,a_2)\cdot \ell(s,a_1,a_2)}{\P_{s^1,\pi_i^{\rho,\delta},\pi_j^{\rho,\delta}}(\theta^{exit}_C<\infty)}.
\end{align*}
 By taking the limit as $n$ goes to infinity in \Eqref{equ:4:1}, and by \Eqref{lambda-nu}, 
\begin{align}
\label{longineq}
v_i(C)-\delta\,&\leq\,\prob_{s^1,\pi_i^{\rho,\delta},\pi_j^{\rho,\delta}}(\theta_C^{exit}<\infty)\cdot \E_{s^1,\pi_i^{\rho,\delta},\pi_j^{\rho,\delta}}[v_i(s^{\theta_C^{exit}})\mid \theta_C^{exit}<\infty]\\
&\quad\ \ +\prob_{s^1,\pi_i^{\rho,\delta},\pi_j^{\rho,\delta}}(\theta_C^{exit}=\infty)\cdot v _i(C)\nonumber\\
&=\,\sum_{(s,a_1,a_2)\in L_C} \lambda^{\rho,\delta}(s,a_1,a_2)\cdot \ell(s,a_1,a_2)\nonumber\\
&\quad\ \ +\prob_{s^1,\pi_i^{\rho,\delta},\pi_j^{\rho,\delta}}(\theta_C^{exit}=\infty)\cdot v_i(C)\nonumber\\
&=\,\sum_{(s,a_1,a_2)\in L_C} \frac{\lambda^{\rho,\delta}(s,a_1,a_2)}{p((S\setminus C)\mid s,a_1,a_2)}\cdot\sum_{s'\in S\setminus C}v_i(s')\cdot p(s'\mid s,a_1,a_2)\nonumber\\
&\quad\ \ +\prob_{s^1,\pi_i^{\rho,\delta},\pi_j^{\rho,\delta}}(\theta_C^{exit}=\infty)\cdot v _i(C)\nonumber\\
& =\,\sum_{(s,a_1,a_2)\in L_C} \frac{\lambda^{\rho,\delta}(s,a_1,a_2)}{p((S\setminus C)\mid s,a_1,a_2)}\cdot\Big(\E[v_i \mid s,a_1,a_2]-p(C\mid s,a_1,a_2)\cdot v_i(C)\Big)\nonumber\\ 
& \quad\ \ +\left(1-\sum_{(s,a_1,a_2)\in L_C}\lambda^{\rho,\delta}(s,a_1,a_2)\right)\cdot v _i(C)\nonumber\\
&=\,
\sum_{(s',a'_1,a'_2) \in L_C} \frac{\lambda^{\rho,\delta}(s,a'_1,a'_2)}{p((S\setminus C)\mid s,a'_1,a'_2)} \cdot
\sum_{(s,a_1,a_2) \in L_C} \widetilde\lambda^{\rho,\delta}(s,a_1,a_2)\cdot 
\E[v_i \mid s,a_1,a_2]\nonumber\\
&+\left(1-\sum_{(s',a'_1,a'_2) \in L_C} \frac{\lambda^{\rho,\delta}(s,a'_1,a'_2)}{p((S\setminus C)\mid s,a'_1,a'_2)} \cdot
\sum_{(s,a_1,a_2) \in L_C} \widetilde\lambda^{\rho,\delta}(s,a_1,a_2)\right)\cdot v _i(C).
\nonumber
\end{align}
By taking the limit in \Eqref{longineq} as $\rho$ and $\delta$ go to 0, and denoting
\[Z:=\sum_{(s',a'_1,a'_2) \in \mathscr{E}_{12}(C,Y)} \frac{\lambda(s,a'_1,a'_2)}{p((S\setminus C)\mid s,a'_1,a'_2)},\]
we obtain
\[0\,\leq\,Z\cdot\left(
\sum_{(s,a_1,a_2) \in \mathscr{E}_{12}(C,Y)} \widetilde\lambda(s,a_1,a_2)\cdot 
\E[v_i \mid s,a_1,a_2]-\sum_{(s,a_1,a_2) \in \mathscr{E}_{12}(C,Y)} \widetilde\lambda(s,a_1,a_2)\cdot v_i(C)\right).\]
Since $Z>0$ and $\widetilde\lambda$ is a probability distribution over $\mathscr{E}_{12}(C,Y)$, 
\[v_i(C)\,\leq\,\sum_{(s,a_1,a_2)\in\mathscr{E}_{12}(C,Y)}{\widetilde\lambda}(s,a_1,a_2)\cdot \E[{v}_i \mid s,a_1,a_2].\]
By Lemma \ref{remark:blocked}, $H_i(C,Y)=v_i(C)$, and \Eqref{toshowjointly} follows.
\end{proof}

\subsection{Notations and definitions for different transition functions}

In the following sections, we will construct auxiliary transition functions for the stochastic game that differ from $p$ in some states. In this section, as a preparation, we fix the notations and definitions for such new transition functions. Let $p':S\times A\to\Delta(S)$ be an arbitrary transition function.

Instead of $\prob$ and $\E$, we will use $\prob_{p'}$ to refer to a probability measure under $p'$ on the set $\mathscr{R}$ of runs, and use $\E_{p'}$ to denote the expectation operator under $p'$; cf.~Eq.~\eqref{oneshot-g}.

\begin{definition}[communicating set under $p'$]
A nonempty set $C\subseteq S_0$ is \emph{$X$-com\-mu\-ni\-cating under $p'$}, where $X\in\mathcal{X}$, if the conditions of Definition \ref{definition:communicating} hold when $p$ is replaced with $p'$ in Condition 1 and $\prob_{s,y}$ is replaced with $\prob_{p',s,y}$ in Condition 2. 
\end{definition} 

\begin{definition}[unilateral exit under $p'$]
Let $C \subseteq S_0$ be a nonempty set, $X \in \mathcal{X}$, and $i \in \{1,2\}$.
A triplet $(s,a_i,x_j(s))\in C\times A_i\times X_j(s)$
is an \emph{$X$-unilateral exit of player~$i$ from $C$ under $p'$} if the conditions of Definition \ref{def:unilateral:exit} hold under $p'$:
there exists $x_i(s) \in X_i(s)$ such that
$p'(C \mid s,x_i(s),x_j(s)) = 1$ and
$p'\bigl(C \mid s,a_i,x_j(s)\bigr) <1$.
\end{definition}

Let $\mathscr{E}_{i,p'}(C,X)$ denote the set of $X$-unilateral exits of player~$i$ from $C$ under $p'$.

\begin{definition}[blocked set under $p'$]
\label{newdedblocked}
Let $C \subseteq S_0$ be a nonempty set, $X\in \mathcal{X}$,
and $i \in \{1,2\}$. The set $C$ is \emph{$X$-blocked to player~$i$ under $p'$} if for each exit $(s,a_i,x_j(s))\in \mathscr{E}_{i,p'}(C,X)$,
\begin{equation}
\label{eqblockednewp}
\E_{p'}[v_i \mid s,a_i,x_j(s)] \,<\, \max_{s'\in C}v_i(s').
\end{equation}
\end{definition} 

Note that in \Eqref{eqblockednewp} we use the maxmin value $v_i$ for player $i$ with respect to the original transition function $p$. 

\subsection{Constructing a First Family of Sets}\label{sec-firstfam}

In this section, we define a family consisting of certain sets of states, which is an analog of the family defined on page 81 in Vieille \cite{vieille2000one}. This family will be used to construct an $\ep$-equilibrium in Section \ref{sec-finalstep}. 

\begin{definition}[the family of sets $\mathscr{F}_1$]
\label{def:calS}
Let $\mathscr{F}_1$ be the family of all nonempty sets $C\subseteq S_0$ that satisfy the following properties:
\begin{enumerate}
\item[(P1)] $C$ is $Y$-communicating.
\item[(P2)] For each $k\in\{1,2\}$, player $k$'s maxmin value is constant over $C$, denoted by~$v_k(C)$.
\item[(P3)] $C$ is $Y$-blocked to player~1.
\item[(P4)] No strict superset of $C$ satisfies (P1)--(P3).
\end{enumerate}
\end{definition}

The sets in $\mathscr{F}_1$ are pairwise disjoint. 
This claim follows from the fact that if $C$ and $C'$ both satisfy (P1)--(P3) and $C\cap C'\neq\emptyset$, then $C\cup C'$ also satisfies (P1)--(P3).

The following lemma is the analog of Lemma 38 in Vieille \cite{vieille2000one}.

\begin{lemma}
\label{lemma38}
Each $C \in \mathscr{F}_1$ is $Y$-controlled by player~2 or $Y$-jointly controlled.
\end{lemma}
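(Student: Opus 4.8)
The plan is to argue by a case split according to whether the set $C$ is $Y$-blocked to player~2, since one of these two cases delivers each of the two alternatives in the statement. Suppose first that $C$ \emph{is} $Y$-blocked to player~2. Then $C$ satisfies all three hypotheses of Lemma~\ref{lemma:alternatives}: it is $Y$-communicating by (P1); it is $Y$-blocked to player~1 by (P3) and to player~2 by the case assumption, hence $Y$-blocked to each player; and $v_1,v_2$ are constant over $C$ by (P2). Lemma~\ref{lemma:alternatives} then yields directly that $C$ is $Y$-jointly controlled, which is the second alternative.

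The substantive case is when $C$ is \emph{not} $Y$-blocked to player~2; here I would show that $C$ is $Y$-controlled by player~2. By Definition~\ref{defblocked}, the failure of blockedness produces a $Y$-unilateral exit $(s,a_2,y_1(s))\in\mathscr{E}_2(C,Y)$ of player~2 with $\E[v_2\mid s,a_2,y_1(s)]\geq\max_{s'\in C}v_2(s')=v_2(C)$. This is exactly the hypothesis \eqref{atleastmaxv} needed to invoke Lemma~\ref{maxexit} with player~2 as the exiting player, using that $Y(s)=Y_1(s)\times Y_2(s)$ is compact for each $s$. Lemma~\ref{maxexit} then produces a best exit $(s^*,a_2^*,y_1^*(s^*))\in\mathscr{E}_2(C,Y)$ maximizing $\E[v_2\mid\cdot]$ over all of $\mathscr{E}_2(C,Y)$; write $\gamma$ for this maximal value, so that $\gamma\geq v_2(C)$.

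It remains to check that this best exit meets the two requirements of Definition~\ref{def:controlled}. For player~2 I would first identify $H_2(C,Y)=\gamma$: since $C$ is $Y$-communicating it is closed under $Y$, so any triplet $(s,a_2,y_1(s))$ with $y_1(s)\in Y_1(s)$ that is \emph{not} an exit must satisfy $p(C\mid s,a_2,y_1(s))=1$, and then the constancy of $v_2$ on $C$ forces $\E[v_2\mid s,a_2,y_1(s)]=v_2(C)\leq\gamma$. Splitting the supremum defining $H_2(C,Y)$ in \eqref{defH} into exits and non-exits thus gives $H_2(C,Y)=\max\{v_2(C),\gamma\}=\gamma$, so $\E[v_2\mid s^*,a_2^*,y_1^*(s^*)]=\gamma=H_2(C,Y)$. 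For player~1 I would apply Lemma~\ref{lemma:expected:minmax} with player~2 playing the pure action $a_2^*$ and player~1 playing $y_1^*(s^*)\in Y_1(s^*)$, obtaining $\E[v_1\mid s^*,a_2^*,y_1^*(s^*)]\geq v_1(s^*)=v_1(C)$; and since $C$ is $Y$-blocked to player~1 and closed under $Y$, Lemma~\ref{remark:blocked} gives $H_1(C,Y)=v_1(C)$. Hence the best exit satisfies $\E[v_k\mid s^*,a_2^*,y_1^*(s^*)]\geq H_k(C,Y)$ for both $k\in\{1,2\}$, so $C$ is $Y$-controlled by player~2, completing this case.

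The step I expect to be the main obstacle is the identification $H_2(C,Y)=\gamma$: the quantity $H_2(C,Y)$ is a supremum over \emph{all} action triplets, while Lemma~\ref{maxexit} only controls the supremum over exits. The argument therefore hinges on the observation that every non-exit triplet contributes exactly $v_2(C)\leq\gamma$, which is precisely where $Y$-communication (closedness under $Y$) and the constancy of $v_2$ on $C$ are used. Once this identity is in hand, the rest is a bookkeeping assembly of Lemmas~\ref{maxexit},~\ref{remark:blocked}, and~\ref{lemma:expected:minmax}, and the two cases together exhaust the dichotomy in the statement.
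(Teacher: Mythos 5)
Your proposal is correct and follows essentially the same route as the paper: the same dichotomy on whether $C$ is $Y$-blocked to player~2, invoking Lemma~\ref{lemma:alternatives} in the blocked case and Lemma~\ref{maxexit} together with Lemmas~\ref{lemma:expected:minmax} and~\ref{remark:blocked} in the other. Your explicit identification $H_2(C,Y)=\gamma$ (splitting the supremum into exits and non-exits, using closedness under $Y$ and constancy of $v_2$) merely spells out what the paper leaves implicit in its one-line justification for $k=2$, and it is valid because $Y$-communication forces every triplet leaving $C$ with positive probability to be an exit.
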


 Since each set in $\mathscr{F}_1$ is $Y$-controlled by player~2 or $Y$-jointly controlled,
we will show that whenever the play reaches one of these sets, 
the players can ensure that the play leaves the set in a properly chosen way. This will allow us to turn all states that lie in some set in $\mathscr{F}_1$ into dummy states.
This will produce a simplified auxiliary stochastic game with the property that
all $\ep$-equilibria of this auxiliary game can be transformed into $\ep$-equilibria of the original game. 
\bigskip

\begin{proof}[Proof of Lemma~\ref{lemma38}]
If $C$ is $Y$-blocked to player~2, then Lemma~\ref{lemma:alternatives} implies that $C$ is $Y$-jointly controlled.

Assume then that $C$ is not $Y$-blocked to player~2. 
It follows that for some exit $(s,y_1(s),a_2)\in\mathscr{E}_2(C,Y)$ we have $\E[v_2\mid s,y_1(s),a_2]\,\geq\,v_2(C)$. Hence, by Lemma \ref{maxexit}, there is an exit $(s^*,y^*_1(s),a^*_2)\in\mathscr{E}_2(C,Y)$ that maximizes $\E[v_2\mid\cdot]$ among all exits in $\mathscr{E}_2(C,Y)$, and thus also
\begin{equation}
\label{reasonexit}
\E[v_2\mid s^*,y^*_1(s^*),a^*_2]\,\geq\,v_2(C).
\end{equation}

We show that $C$ is $Y$-controlled by player 2, by verifying that $\E[v_k\mid s^*,y^*_1(s),a^*_2]\geq H_k(C,Y)$ for each player $k\in\{1,2\}$.
For $k=2$, this inequality follows by the choice of $(s^*,y^*_1(s),a^*_2)$ and by \Eqref{reasonexit}.
For $k=1$, this follows by Lemmas \ref{lemma:expected:minmax} and \ref{remark:blocked}.
%
%
\end{proof}

\begin{notation}[the exit distribution $q^{\mathscr{F}_1}_C$]\rm 
Based on Lemma \ref{lemma38}, for each $C \in \mathscr{F}_1$ we choose an exit distribution $q^{\mathscr{F}_1}_C \in \Delta(S)$ as follows:
\begin{itemize}
\item
If $C$ is $Y$-controlled by player~2,
we set
\[ q^{\mathscr{F}_1}_C \,:=\, p(\cdot \mid s,x_1(s),a_2), \]
for some $Y$-unilateral exit $(s,x_1(s),a_2)\in\mathscr{E}_2(C,Y)$ of player~2 from $C$ such that for each player $k\in\{1,2\}$
\begin{equation}
\label{equ:qf1:1} 
\E[v_k\mid s,x_1(s),a_2]\, \geq \,H_k(C,Y). 
\end{equation}
Note that $H_1(C,Y)=v_1(C)$ by Lemma \ref{remark:blocked}.
\item
If $C$ is not $Y$-controlled by player 2, then $C$ is $Y$-jointly controlled, and 
we set
\[ q^{\mathscr{F}_1}_C \,:=\, \sum_{(s,a_1,a_2) \in \mathscr{E}_{12}(C,Y)} \mu(s,a_1,a_2) \cdot p(\cdot \mid s,a_1,a_2), \]
for some probability distribution $\mu$ over $\mathscr{E}_{12}(C,Y)$ such that for each player $k\in\{1,2\}$, 
\begin{equation}
\label{equ:qf1:2} 
\sum_{(s,a_1,a_2) \in \mathscr{E}_{12}(C,Y)} \mu(s,a_1,a_2)\cdot \E[v_k\mid s,a_1,a_2]\, \geq \,H_k(C,Y). 
\end{equation}
\end{itemize}
In both cases, we define for each player $k\in\{1,2\}$,
\[
\E[v_k\mid q^{\mathscr{F}_1}_C]\,:=\,\sum_{s'\in S}q^{\mathscr{F}_1}_C(s')\cdot v_k(s'),\]
which is the expectation of player $k$'s maxmin value in the next state if the trasition is according to the exit distribution $q^{\mathscr{F}_1}_C$. 
\end{notation}

By Eqs.~\eqref{equ:91}, \eqref{equ:qf1:1}, and~\eqref{equ:qf1:2}, 
for each player $k\in\{1,2\}$,
\begin{equation}
\label{exp-q}
\E[v_k\mid q^{\mathscr{F}_1}_C]\,\geq\,H_k(C,Y)\,\geq\,v_k(C).
\end{equation}

\begin{notation}[the sets of states $S_{\mathscr{F}_1}$ and $\widehat S_0$]\rm 
Let $S_{\mathscr{F}_1}$ denote the set of states $s\in S_0$ that belong to some $C\in\mathscr{F}_1$:
\[S_{\mathscr{F}_1}\,:=\bigcup_{C \in \mathscr{F}_1} C.\]
Denote $\widehat S_0 := S_0 \setminus S_{\mathscr{F}_1}$,
which is the set of nonabsorbing states that do not lie in any set in $\mathscr{F}_1$. 
\end{notation}

Thus, the set of states $S$ is partitioned into $\{S^*,\widehat S_0,S_{\mathscr{F}_1}\}$; recall that $S^*$ is the set of absorbing states.

\begin{notation}[uniform distribution $U_C$]\rm 
For every nonempty set $C \subseteq S$ of states, let $U_C\in\Delta(S)$ be the uniform distribution over $C$, i.e., $U_C(s)=1/|C|$ if $s\in C$ and $U_C(s)=0$ otherwise.
\end{notation}

\begin{notation}[the transition function $\widehat p$]\rm 
Let $\widehat p:S\times A\to \Delta(S)$ denote the following transition function: for each state $s\in S$ and action pair $a\in A$,
\[ \widehat p(\cdot \mid s,a) \,:=\, \left\{
\begin{array}{lll}
\frac{1}{2} q^{\mathscr{F}_1}_C + \frac{1}{2}U_C, & \ \ \ \ \ & \text{if }s\in S_{\mathscr{F}_1},\text{ i.e., }s \in C\text{ for some }C\in\mathscr{F}_1\\[0.2cm]
p(\cdot \mid s,a), & & \text{if }s \in S^*\cup \widehat S_0.
\end{array}
\right.
\]
\end{notation}

Thus,
$\widehat p$ coincides with $p$
at states that are not in $S_{\mathscr{F}_1}$,
and $\widehat p$ ensures that the play leaves $C$ according to $q^{\mathscr{F}_1}_C$
at states in $C \in S_{\mathscr{F}_1}$.

Note that $\widehat p$ is well defined, because the sets in $\mathscr{F}_1$ are pairwise disjoint. 
By definition, the transitions under $\widehat p$ in states in $S_{\mathscr{F}_1}$ are independent of the players' actions.
Whenever we make statements that refer to the transition function $\widehat p$, we will explicitly mention that. 

The following lemma is the analog of Lemma 42 in Vieille \cite{vieille2000one}.

\begin{lemma}
\label{lemma:42}
There is no subset $D\subseteq S_0$ that satisfies the following three properties simultaneously:
\begin{enumerate}
\item[(C1)] $\widehat p(D \mid s,y(s)) = 1$, for every $s \in D$ and every $y(s) \in Y(s)$.
\item[(C2)] $D$ is $Y$-blocked to player~1.
\item[(C3)] For each $k\in\{1,2\}$, player $k$'s maxmin value $v_k$ is constant over $D$.
\end{enumerate}
\end{lemma}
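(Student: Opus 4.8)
The plan is to argue by contradiction: assume some nonempty $D\subseteq S_0$ satisfies (C1)--(C3), write $v_k(D)$ for the common value of $v_k$ on $D$, and manufacture from $D$ a set that violates the maximality property (P4) of $\mathscr{F}_1$. First I would read off the consequences of (C1). If some $C_0\in\mathscr{F}_1$ meets $D$, then at a state $s\in C_0\cap D$ the transition $\widehat p(\cdot\mid s,\cdot)=\tfrac12 q^{\mathscr{F}_1}_{C_0}+\tfrac12 U_{C_0}$ does not depend on the actions, so (C1) forces $U_{C_0}(D)=1$ and $q^{\mathscr{F}_1}_{C_0}(D)=1$; since $U_{C_0}$ has full support on $C_0$, this gives $C_0\subseteq D$ and $\supp(q^{\mathscr{F}_1}_{C_0})\subseteq D$. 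I would then check that $D$ is closed under $p$ and $Y$: on $D\cap\widehat S_0$ we have $\widehat p=p$, so closure is immediate from (C1), while on $D\cap C_0$ it follows from $C_0$ being $Y$-communicating together with $C_0\subseteq D$. In particular every pair of $Y$-actions keeps the play in $D$.

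Next I would isolate the right subset of $D$. Call a perturbation $y$ of $Y$ \emph{$D$-admissible at $s$} if $p(D\mid s,y(s))=1$, and call $E\subseteq D$ \emph{$D$-closed} if $p(E\mid s,y(s))=1$ for every $s\in E$ and every $D$-admissible perturbation $y$. As $D$ is $D$-closed and $S$ is finite, I would fix a minimal nonempty $D$-closed $E\subseteq D$ and claim it satisfies (P1)--(P3). Property (P2) is immediate from (C3). For (P1), $Y$-actions are $D$-admissible, so $D$-closedness gives the first communicating condition, and minimality shows that from any $s\in E$ every $s'\in E$ is reachable through $D$-admissible perturbations that stay in $E$; I would upgrade this to the required probability-one reachability by playing, at each state, a single $D$-admissible perturbation that moves strictly closer to $s'$. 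For (P3), if a player-$1$ exit $(s,a_1,y_2(s))\in\mathscr{E}_1(E,Y)$ were not also a player-$1$ exit of $D$, then mixing $a_1$ into the $Y_1$-action that keeps the play in $E$ would be a $D$-admissible perturbation leaving $E$ with positive probability, contradicting $D$-closedness; hence (C2) applies and yields $\E[v_1\mid s,a_1,y_2(s)]<v_1(D)=\max_{s'\in E}v_1(s')$.

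Once $E$ satisfies (P1)--(P3), it extends to a maximal such set $C^*\in\mathscr{F}_1$ with $E\subseteq C^*$; since $E\subseteq C^*\cap D$, the first step gives $C^*\subseteq D$. Because $C^*$ is $Y$-communicating, its connecting perturbations stay in $C^*\subseteq D$, hence are $D$-admissible, so $D$-closedness of $E$ forces $C^*\subseteq E$ and therefore $E=C^*$. The contradiction then comes from the exit distribution $q^{\mathscr{F}_1}_{C^*}$, whose support lies in $D$: the unilateral exit $(s,x_1(s),a_2)$ used when $C^*$ is $Y$-controlled by player~$2$, or a joint exit $(s,a_1,a_2)$ with positive $\mu$-weight when $C^*$ is $Y$-jointly controlled, can be realized by a $D$-admissible perturbation at $s\in C^*=E$ that nevertheless reaches a state of $\supp(q^{\mathscr{F}_1}_{C^*})\setminus C^*\subseteq D\setminus E$ with positive probability, contradicting the $D$-closedness of $E$.

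The delicate points, where I expect the real work to be, are the two realizability claims that hinge on keeping the play inside $D$. In (P1) I must turn positive-probability reachability into probability-one reachability without ever leaving $E$; the hazard is that combining several perturbations at one state can create action pairs outside the supports of $Y$ that lead out of $D$, and I avoid this by using exactly one admissible perturbation per state. In the final step I must verify that the (joint) exit realizing $q^{\mathscr{F}_1}_{C^*}$ is genuinely $D$-admissible: here the defining inequalities of a unilateral, respectively joint, exit guarantee that every relevant mixed action pair keeps the play in $C^*\subseteq D$, while $\supp(q^{\mathscr{F}_1}_{C^*})\subseteq D$ takes care of the exiting pair itself.
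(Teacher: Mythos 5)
Your proof is correct, and while its skeleton matches the paper's (argue by contradiction, extract from $D$ a subset satisfying (P1)--(P3), and collide with the structure of $\mathscr{F}_1$), the execution differs in ways worth noting. The paper first builds a maximal strongly connected component $\overline D\subseteq D$ of a digraph defined via $\widehat p$, and must then translate $\widehat p$-communication back into $p$-communication by walking through the $U_C$ and $q^{\mathscr{F}_1}_C$ components of $\widehat p$; you instead front-load all consequences of (C1) --- that any $C_0\in\mathscr{F}_1$ meeting $D$ satisfies $C_0\subseteq D$ \emph{and} $\supp(q^{\mathscr{F}_1}_{C_0})\subseteq D$, and that $D$ is closed under $p$ and $Y$ --- after which your minimal $D$-closed set $E$ (the same object as a sink component of the paper's digraph) lives entirely in the world of $p$, and your verification of (P3) goes uniformly through $\mathscr{E}_1(D,Y)$ and (C2) rather than splitting into the paper's two cases (the paper invokes (P3) of the ambient $C\in\mathscr{F}_1$ when $s\in S_{\mathscr{F}_1}$). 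The endgames also differ: the paper shows $\overline D$ must be disjoint from every member of $\mathscr{F}_1$ yet contained in one; you show $E$ equals some $C^*\in\mathscr{F}_1$ and then contradict $D$-closedness using the fact that the exit distribution $q^{\mathscr{F}_1}_{C^*}$ is supported in $D$ and is realized by a $D$-admissible perturbation that leaves $C^*$ with positive probability. Both closings exploit the same tension (the prescribed exit from a set in $\mathscr{F}_1$ cannot leave $D$), and you correctly flag and handle the two delicate realizability points: upgrading reachability to almost-sure reachability with one admissible perturbation per state, and checking that the (joint) exit pair is $D$-admissible because all non-exiting action combinations stay in $C^*$ while the exiting one lands in $\supp(q^{\mathscr{F}_1}_{C^*})\subseteq D$.
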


\begin{proof}
We argue by contradiction, and assume that $D \subseteq S_0$ satisfies (C1)--(C3).

\smallskip
\noindent\textbf{Step 1:} Constructing a $Y$-communicating set $\overline D$ under $\widehat p$.

By (C1), there exists a nonempty subset $\overline D \subseteq D$ such that
\begin{enumerate}
    \item[(a)] $\overline D$ is $Y$-communicating under $\widehat p$,
    \item[(b)] $\widehat p(\overline D\mid s,y(s))=1$ for every $s \in \overline D$ and every perturbation $y(s)$ of $Y(s)$ such that $\widehat p(D\mid s,y(s))=1$.
\end{enumerate} 
Indeed, for states $s,s'\in D$, write $s\to s'$ if there is a perturbation $y(s)$ of $Y(s)$ such that $\widehat p(D\mid s,y(s))=1$ and $\widehat p(s' \mid s,y(s)) >0$. In the directed graph with $D$ as the set of vertices and arcs given by $\to$, any set $\overline D$ of vertices that forms a maximal strongly connected component satisfies (a) and (b); cf.~also Lemma 41 in Vieille \cite{vieille2000one}.

\smallskip
\noindent\textbf{Step 2:} Two properties of $\overline D$.

We will use two properties of the construction regularly:
\begin{enumerate}
    \item[(i)] $\widehat p$ coincides with $p$ in states belonging to $\overline D\cap \widehat{S}_0$.
    \item[(ii)] For each $C\in\mathscr{F}_1$ we have $C\subseteq \overline D$ or $C\cap \overline D=\emptyset$.
\end{enumerate} 

Property (i) holds by definition of $\widehat p$. 
To see that property (ii) holds, assume that $C\cap \overline D\neq\emptyset$ for some set $C\in\mathscr{F}_1$. 
Fix $s\in C\cap \overline D$. 
At state $s$, the transition function $\widehat p$ places a positive probability to moving to all states in $C$ (through $U_C$). 
By property (b), we have $C\subseteq \overline D$.

We next show that $\overline D$ satisfies (P1)--(P3) and that $\overline D$ is disjoint from each $C\in\mathscr{F}_1$, and then we derive a contradiction 
to the minimality of $\mathscr{F}_1$.

\smallskip
\noindent\textbf{Step 3:} $\overline D$ satisfies (P1)--(P3).

\noindent\textsc{Property (P1) for $\overline D$:} By property (a), we already know that $\overline D$ is $Y$-communicating under $\widehat p$. 
We need to prove that $\overline D$ is also $Y$-communicating under $p$; cf.~also Lemma 40 in Vieille \cite{vieille2000one}.

First we show that $\overline D$ satisfies Condition 1 of Definition \ref{definition:communicating} under $p$:  for every $s\in\overline D$ and every $y(s)\in Y(s)$ we have $p(\overline D\mid s,y(s))=1$. Let $s\in\overline D$ and $y(s)\in Y(s)$. If $s\in \widehat{S}_0$, then by (i) and (b) above, $p(\overline D\mid s,y(s))=\widehat p(\overline D\mid s,y(s))=1$. If $s\in C$ for some $C\in\mathscr{F}_1$, then by (ii) above, $C\subseteq \overline D$, and hence (P1) for the set $C$ implies $p(\overline D\mid s,y(s))=1$.

We next argue that $\overline D$ satisfies Condition 2 of Definition \ref{definition:communicating} under $p$: for every $s,s'\in \overline D$, there exists a perturbation of $Y$ that allows moving from $s$ to $s'$ under $p$ without leaving $\overline D$. 
Recalling (a), this holds due to (i) and (ii) above. 
Indeed, first, in states belonging to $\overline D\cap \widehat{S}_0$, the same transitions are possible under $\widehat p$ and $p$. 
Second, in each set $C\in\mathscr{F}_1$ with $C\subseteq \overline D$, $\widehat p$ is 
an average
of $U_C$ and $q^{\mathscr{F}_1}_C$. 
By property~(P1) for $C$, it is possible also under $p$ to move from any state to any other state within $C$ using mixed actions in $Y$, and under $p$ the set $C$ can be left with the distribution $q^{\mathscr{F}_1}_C$ using a perturbation of $Y$.\smallskip

\noindent\textsc{Property (P2) for $\overline D$:} The property holds since $\overline D \subseteq D$ and by (C3).\smallskip

\noindent\textsc{Property (P3) for $\overline D$:} Take an arbitrary unilateral $Y$-exit $(s,a_1,y_2(s))\in\mathscr{E}_1(\overline D,Y)$ of player~1 from $\overline D$. In particular, $s\in \overline D$. We distinguish between two cases.\smallskip

\noindent\textsc{Case 1:} $s \in C$ for some $C \in \mathscr{F}_1$. 

By (ii) above, $C \subseteq \overline D$.
Hence, $(s,a_1,y_2(s))$ is a $Y$-unilateral exit of player~1 from $C$, i.e., $(s,a_1,y_2(s))\in\mathscr{E}_1(C,Y)$. Thus, we have
\[\E[v_1 \mid s,a_1,y_2(s)] \,<\, \max_{s' \in C} v_1(s')\,=\, \max_{s' \in \overline D} v_1(s'),\]
where the inequality is by Property (P3) for $C$, and the equality is by Property (C3) together with $C\subseteq \overline D\subseteq D$.
\smallskip

\noindent\textsc{Case 2:} $s \in \widehat S_0$. 

In this case, by (i), at state $s$ the transitions by $\widehat p$ and by $p$ coincide. 

First we argue that $p(D\mid s,a_1,y_2(s))<1$. Assume by contradiction the opposite: $p(D\mid s,a_1,y_2(s))=1$. For any $y_1(s)\in Y_1(s)$, the mixed action $y'_1(s)=\frac{1}{2}\cdot a_1+\frac{1}{2}\cdot y_1(s)$ is a perturbation of $Y_1(s)$, and $p(D\mid s,y'_1(s),y_2(s))=1$. Hence, by property (b) above, $p(\overline D\mid s,y'_1(s),y_2(s))=1$, and in particular,  $p(\overline D\mid s,a_1,y_2(s))=1$. This, however, contradicts $(s,a_1,y_2(s))\in\mathscr{E}_1(\overline D,Y)$. 
Thus, $p(D\mid s,a_1,y_2(s))<1$ as claimed.

Therefore, $(s,a_1,y_2(s))$ is a $Y$-unilateral exit of player~1 from $D$, i.e., $(s,a_1,y_2(s))\in\mathscr{E}_1(D,Y)$. Then, 
\[\E[v_1\mid s,a_1,y_2(s)] \,<\,\max_{s' \in D} v_1(s') \,=\, \max_{s' \in \overline D} v_1(s'), \]
where the inequality is by Property (C2), and the equality is by Property (C3) together with $\overline D\subseteq D$. 

These two cases show that property~(P3) holds for $\overline D$, as claimed.

\smallskip
\noindent\textbf{Step 4:} The contradiction.

We argue that $\overline D$ is disjoint from each $C\in \mathscr{F}_1$.
Indeed, 
suppose by way of contradiction that $\overline D\cap C\neq\emptyset$ for some $C\in\mathscr{F}_1$. Then, by (ii) above, $C \subseteq \overline D$. Since no set in $\mathscr{F}_1$ is closed under $\widehat p$, by (b) we conclude that $\overline D$ is a strict superset of $C$. 
This contradicts the fact that $C$ satisfies Property (P4).\smallskip

By Step~3, $\overline D$ satisfies (P1)--(P3). Hence, there is a set $\widetilde  D\subseteq S_0$ such that $\widetilde D$ contains $\overline D$ and $\widetilde D$ satisfies (P1)--(P4), i.e., $\widetilde D\in\mathscr{F}_1$. But then $\overline D$ is not disjoint from $\widetilde D\in\mathscr{F}_1$, which contradicts the previous argument of the proof.
\end{proof}

\subsection{Constructing a Second Family of Sets}\label{sec-secondfam}

In this section, we construct a second family of sets, 
denoted $\mathscr{F}_2$.
As an auxiliary step to define this family, we will define another family of sets, denoted $\overline {\mathscr{F}}_2$.

\begin{notation}[the set $Y_1^D(s)$]
\label{def:Y_1D}
\rm 
Let $D\subseteq S_0$ be a nonempty set, and let $s\in D$. 
Let $Y_1^D(s) \subseteq \Delta(A_1)$ be the set defined as follows:
\begin{itemize}
\item If $s\notin S_{\mathscr{F}_1}$, then $Y_1^D(s)$ is the set of mixed actions $x_1(s)\in\Delta(A_1)$ of player~1 with the following property: whenever an action $a_2\in A_2$ of player~2 satisfies $p(D\mid s,x_1(s),a_2)<1$ then $\E[v_2\mid s,x_1(s),a_2]<v_2(s)$. 
\item If $s\in S_{\mathscr{F}_1}$, then $Y_1^D(s):=Y_1(s).$
\end{itemize}
\end{notation}
In words, whenever $s \not\in S_{\mathscr{F}_1}$,
the set $Y_1^D(s)$ contains all mixed actions of player~1 at state~$s$ such that
all actions of player~2 that cause the game to leave $D$ with positive probability, lower the expected maxmin value of player~2.
The set $Y_1^D(s)$ can be empty for certain sets $D\subseteq S_0$ and states $s\in D\setminus S_{\mathscr{F}_1}$.

\begin{notation}[the product set $Y^D$]\label{notationY^D}\rm
Let $D\subseteq S_0$ be a nonempty set. 
Denote 
\[ Y^D\, :=\,\prod_{s\in D}\Big(Y_1^D(s)\times Y_2(s)\Big). \]
\end{notation}
The set $Y^D$ is nonempty if and only if $Y_1^D(s)$ is nonempty for each $s\in D\setminus S_{\mathscr{F}_1}$.

\begin{definition}[the family of sets $\overline{\mathscr{F}}_2$]
\label{def-F2}
Let $\overline{\mathscr{F}}_2$ be the family of all nonempty sets $D\subseteq S_0$ for which the following properties hold:
\begin{enumerate}
\item[(P'1)] Player 2's maxmin value $v_2(s)$ is constant over $D$, denoted by $v_2(D)$.
\item[(P'2)] The set $Y^D$ is nonempty, and for every $s,s' \in D$, there exists a perturbation $y$ of $Y^D$ that, under $\widehat p$, allows moving from $s$ to $s'$ without leaving $D$: 
\begin{equation}
\label{eqcommforD}
\prob_{\widehat{p},s,y}\big(\theta_D^{exit} = \infty\text{ and }\exists t\in\mathbb{N} \hbox{ such that } s^t=s'\big) \,=\, 1.
\end{equation}
\end{enumerate}
\end{definition}

The family $\overline{\mathscr{F}}_2$ is closely related to the family defined in Section 6.4.2 in Vieille \cite{vieille2000one}. However, our definition is somewhat different. The main reason is that our definition has to accommodate an entire set of mixed actions in each state in $S_0$, rather than single stationary strategies. In addition, our definition, which was partially suggested to us by Nicolas Vieille, circumvents a slight inaccuracy in Lemma~43 in Vieille \cite{vieille2000one}.

\begin{remark}[independence of $y$ in (P'2) from $s$]\label{uniform-pert}\rm 
Note that if (P'2) holds for some $D\subseteq S_0$, then given any fixed $s'\in D$, the perturbation $y$ in (P'2) can be chosen independently from the initial state $s\in S$; that is, there is a perturbation $y$ of $Y^D$ that satisfies \Eqref{eqcommforD} for all initial states $s\in D$. 
Indeed, let $y$ be a perturbation of $Y^D$ such that $y$ satisfies \Eqref{eqcommforD} for some arbitrary initial state $s\in D$. Let $Q$ denote the set of initial states in $D$ for which $y$ satisfies \Eqref{eqcommforD}; 
thus $s\in Q$. If $Q=D$ then we are done. Otherwise, take $w\in D\setminus Q$ and a perturbation $y^w$ of $Y^D$ such that $y^w$ satisfies \Eqref{eqcommforD} for the initial state $w$. 
Playing $y$ in states in $Q$ and playing $y^w$ in states in $D\setminus Q$ satisfies \Eqref{eqcommforD} for all initial states in $Q\cup\{w\}$. 
Continuing this argument recursively, we obtain the desired perturbation that works for all initial states in $D$. 
\end{remark}

The following lemma establishes a relation between the families $\mathscr{F}_1$ and $\overline{\mathscr{F}}_2$, and states a number of properties of the sets in $\overline{\mathscr{F}}_2$.

\begin{lemma}\label{lemma:13}\ 
\begin{enumerate}
\item\label{p1l13} Let $C \in \mathscr{F}_1$ and $D \in \overline{\mathscr{F}}_2$. Then,
either $C \subseteq D$ or $C \cap D = \emptyset$.
\item\label{p3l13} Let $D\in \overline{\mathscr{F}}_2$.  
Then $p(D \mid s,y(s))=\widehat{p}(D \mid s,y(s))=1$ for all $s\in D$ and $y(s)\in Y^D(s)$.\
\item\label{coml13} Each set $D\in \overline{\mathscr{F}}_2$ is $Y^D$-communicating.
\item\label{p4l13} Let $D,D'\in \overline{\mathscr{F}}_2$ such that $D\cap D'\neq \emptyset$. Then, $D\cup D'\in \overline{\mathscr{F}}_2$.
\end{enumerate}
\end{lemma}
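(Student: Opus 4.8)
The plan is to prove the four assertions in the order \ref{p1l13}, \ref{p3l13}, \ref{coml13}, \ref{p4l13}, since each uses the previous ones. Throughout write $v_2(D)$ for the constant in (P'1), and recall that at a state $s\in S_{\mathscr{F}_1}$ the transition $\widehat p(\cdot\mid s,\cdot)$ is action-independent and equals $\tfrac12 q^{\mathscr{F}_1}_C+\tfrac12 U_C$, where $C\in\mathscr{F}_1$ is the unique set containing $s$; in particular this distribution gives positive weight to every state of $C$. For Part~\ref{p1l13} I suppose $s\in C\cap D$. By (P'2) and Remark~\ref{uniform-pert} there is a perturbation $y$ of $Y^D$ with $\prob_{\widehat p,s,y}(\theta_D^{exit}=\infty)=1$, so the one-step support of $\widehat p(\cdot\mid s,y(s))$ lies in $D$. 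Since $s\in S_{\mathscr{F}_1}$ this support contains all of $C$ through $U_C$, whence $C\subseteq D$; the same computation shows $\supp(q^{\mathscr{F}_1}_C)\subseteq D$, a fact I reuse below.

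For Part~\ref{p3l13}, fix $s\in D$ and $y(s)=(y_1(s),y_2(s))\in Y_1^D(s)\times Y_2(s)$. If $s\in C\subseteq D$ for some $C\in\mathscr{F}_1$ (using Part~\ref{p1l13}), then $Y_1^D(s)=Y_1(s)$ gives $y(s)\in Y(s)$, so $Y$-communication of $C$ yields $p(C\mid s,y(s))=1$ and hence $p(D\mid s,y(s))=1$; under $\widehat p$ the transition $\tfrac12 q^{\mathscr{F}_1}_C+\tfrac12 U_C$ is supported in $D$ by Part~\ref{p1l13}, so $\widehat p(D\mid s,y(s))=1$. If instead $s\in\widehat S_0$, then $\widehat p=p$ at $s$ and the key computation is as follows. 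By Lemma~\ref{lemma:expected:minmax}, averaging over $a_1\sim y_1(s)$ gives $\E[v_2\mid s,y_1(s),y_2(s)]\ge v_2(s)$. Decomposing instead over $a_2\sim y_2(s)$: for each $a_2$ with $p(D\mid s,y_1(s),a_2)=1$ the next state lies in $D$ a.s., so $\E[v_2\mid s,y_1(s),a_2]=v_2(D)=v_2(s)$ by (P'1); while for each $a_2$ with $p(D\mid s,y_1(s),a_2)<1$ the defining property of $Y_1^D(s)$ gives $\E[v_2\mid s,y_1(s),a_2]<v_2(s)$. Hence $\E[v_2\mid s,y_1(s),y_2(s)]\le v_2(s)$, with strict inequality unless every $a_2\in\supp(y_2(s))$ satisfies $p(D\mid s,y_1(s),a_2)=1$. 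Comparison with the reverse inequality forces the latter, i.e.\ $p(D\mid s,y_1(s),y_2(s))=1=\widehat p(D\mid s,y(s))$.

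For Part~\ref{coml13}, Condition~1 of Definition~\ref{definition:communicating} for $Y^D$ is exactly the $p$-statement of Part~\ref{p3l13}. For Condition~2 I transfer the $\widehat p$-reachability of (P'2) to $p$: at states of $\widehat S_0\cap D$ one has $\widehat p=p$, so the perturbations move the play identically; at a state $s\in C\subseteq D$ the single $\widehat p$-step $\tfrac12 q^{\mathscr{F}_1}_C+\tfrac12 U_C$ can be reproduced under $p$ by a perturbation of $Y$ that stays in $D$, namely by using $Y$-communication of $C$ to move anywhere inside $C$ and a perturbation realizing the exit that defines $q^{\mathscr{F}_1}_C$ to reach $\supp(q^{\mathscr{F}_1}_C)\subseteq D$ (here one takes player~$1$'s action to be the $Y_1$-component of that exit and perturbs player~$2$ slightly towards the exit action, so that all resulting pure pairs keep the play in $D$). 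Thus every one-$\widehat p$-step transition inside $D$ is reproduced by a positive-probability $p$-path inside $D$, so $p$-reachability within $D$ (via perturbations of $Y^D$, which keep the play in $D$ by Part~\ref{p3l13}) dominates $\widehat p$-reachability; since (P'2) makes $D$ communicate under $\widehat p$, a standard finite-chain argument (positive-probability reachability from every state together with closedness gives geometric decay of the probability of avoiding the target) yields the required a.s.\ reaching.

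For Part~\ref{p4l13}, (P'1) for $D\cup D'$ is immediate, since a common state $s_0\in D\cap D'$ forces $v_2(D)=v_2(s_0)=v_2(D')$. For (P'2), note first that $Y_1^E(s)$ is monotone in $E$: if $E\subseteq E'$ then fewer $a_2$ satisfy $p(E'\mid s,x_1(s),a_2)<1$, so $Y_1^E(s)\subseteq Y_1^{E'}(s)$, while $Y_1^E(s)=Y_1(s)$ on $S_{\mathscr{F}_1}$; hence $Y^D(s),Y^{D'}(s)\subseteq Y^{D\cup D'}(s)$, so $Y^{D\cup D'}$ is nonempty and perturbations of $Y^D$ or $Y^{D'}$ are perturbations of $Y^{D\cup D'}$. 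To reach a fixed target, say $s'\in D'$, take from (P'2) the perturbation $y^{D'}$ for $D'$ reaching $s'$ and the perturbation $y^{D}$ for $D$ reaching some $s_0\in D\cap D'$, and set $y:=y^{D'}$ on $D'$ and $y:=y^{D}$ on $D\setminus D'$. Starting in $D'$ the play follows $y^{D'}$, stays in $D'$ and reaches $s'$ a.s.; starting in $D\setminus D'$ it follows $y^{D}$, stays in $D$ and reaches $s_0\in D'$ a.s., at which moment it has entered $D'$ and thereafter reaches $s'$ a.s., always remaining in $D\cup D'$; this is (P'2). I expect the main obstacle to be Condition~2 in Part~\ref{coml13}, where the action-independent $\widehat p$-transitions at the $S_{\mathscr{F}_1}$-states must be faithfully simulated under $p$ while every auxiliary perturbation is kept inside $D$; the value-based closedness computation in Part~\ref{p3l13} is the other step requiring real care.
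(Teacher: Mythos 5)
Your proposal is correct and follows essentially the same route as the paper's proof: Part~\ref{p1l13} via the action-independent $\widehat p$-transition through $U_C$ combined with (P'2), Part~\ref{p3l13} via the two-sided bound on $\E[v_2\mid s,y_1(s),\cdot]$ using Lemma~\ref{lemma:expected:minmax} and the definition of $Y_1^D(s)$, Part~\ref{coml13} by simulating the $\widehat p$-steps under $p$ with perturbations inside the $\mathscr{F}_1$-sets, and Part~\ref{p4l13} via the inclusion $Y_1^D(s)\subseteq Y_1^{D\cup D'}(s)$ and composition of perturbations as in Remark~\ref{uniform-pert}. The only cosmetic differences (the explicit finite-chain closing argument in Part~\ref{coml13} and the general monotonicity of $E\mapsto Y_1^E(s)$ in Part~\ref{p4l13}) do not change the substance.
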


\begin{proof}\ \\
\noindent\textsc{Proof of Part 1.} Let $C \in \mathscr{F}_1$ and $D \in \overline{\mathscr{F}}_2$,
and suppose that $C \cap D \neq \emptyset$.
We will show that $C \subseteq D$.

Fix then $s \in C \cap D$ and $s'\in C$.
As $s,s'\in C$, we have $\widehat p(s' \mid s,a) > 0$ for every action pair $a \in A$. 
Since $s\in D$ and by property (P'2), we have $s'\in D$.
Since $s' \in C$ is arbitrary, the claim follows.\medskip

\noindent\textsc{Proof of Part 2.} Let $D\in \overline{\mathscr{F}}_2$, $s\in D$, and $y(s)\in Y^D(s)$. We distinguish between two cases.

Assume first that $s\in S_{\mathscr{F}_1}$, i.e., $s\in C$ for some $C\in\mathscr{F}_1$. Then, $y(s)\in Y(s)$ and by Part 1 we have $C\subseteq D$. 
By (P1) for $C$, we obtain $p(C\mid s,y(s))=1$, hence a fortiori $p(D\mid s,y(s))=1$. 
The transition in state $s$ under $\widehat p$ is independent of the actions. Hence, by (P'2) for $D$, we have $\widehat p(D\mid s,y(s))=1$ as well.

Assume now that $s\notin S_{\mathscr{F}_1}$. Since $\widehat p$ and $p$ coincide in state $s$, it suffices to show $p(D\mid s,y(s))=1$. 
Denote \[B_2(s)\,=\,\{a_2\in A_2\,\colon\, p(D\mid s,y_1(s),a_2)=1\}.\] 
By (P'1) we have $\E[v_2\mid s,y_1(s),a_2]=v_2(D)$ for each $a_2\in B_2(s)$ and by the definition of $Y_1^D(s)$ we have $\E[v_2\mid s,y_1(s),a_2]<v_2(D)$ for each $a_2\in A_2\setminus B_2(s)$. Hence, by Lemma \ref{lemma:expected:minmax} we obtain $\supp(y_2(s))\subseteq B_2(s)$, which implies $p(D\mid s,y(s))=1$.\medskip

\noindent\textsc{Proof of Part 3.} Let $D\in \overline{\mathscr{F}}_2$. By Part 2, we have $p(D\mid s,y(s))=1$ for all $s\in D$ and $y(s)\in Y^D(s)$. Hence, 
Condition 1 of Definition \ref{definition:communicating} satisfied. 

We next verify that Condition 2~of Definition \ref{definition:communicating} holds for each two distinct states $s,s'\in D$. Consider the perturbation $y$ of $Y^D$ given by (P'2) for these states. 
We will define a perturbation $y'$ of $Y^D$ that ensures that under $p$ the play moves from $s$ to $s'$ without leaving $D$.
By Part~\ref{p1l13}, it is sufficient to define $y'$ for every $C \in  \mathscr{F}_1$ such that $C\subseteq D$, and on $D\setminus S_{\mathscr{F}_1}$.
Define then $y'$ as follows:
\begin{itemize}
    \item Definition of $y'$ on $D\setminus S_{\mathscr{F}_1}$: Let $y'(s)=y(s)$ for all states $s\in D\setminus S_{\mathscr{F}_1}$. Note that, in these states, $p$ and $\widehat p$ coincide.
    \item Definition of $y'$ on each $C\in \mathscr{F}_1$ such that $C\subseteq D$: We distinguish between two cases. If $s'\in C$, then $y'$ uses a perturbation of $Y$ to move to state $s'$ from all other states in $C$ without leaving $C$, which is possible by (P1) for $C$. If $s'\notin C$, then $y'$ uses a perturbation of $Y$ to leave $C$ according to the exit distribution $q_C^{\mathscr{F}_1}$. Note that $Y^D$ coincides with $Y$ on $C$.
\end{itemize}
By construction, $y'$ is a $Y^D$-perturbation, and $y'$ satisfies Eq.~\eqref{eqcomm}.
\medskip

\noindent\textsc{Proof of Part 4.} Let $D,D'\in \overline{\mathscr{F}}_2$ such that $D\cap D'\neq \emptyset$. 
Plainly, $D\cup D'$ satisfies (P'1), and $v_2(D)=v_2(D')=v_2(D\cup D')$. 
Thus, we only need to argue that $D\cup D'$ satisfies (P'2). 

First we prove that $Y^{D\cup D'}$ is nonempty. It is enough to verify that $Y_1^{D\cup D'}(s)$ is nonempty for each $s\in D\cup D'$. Let $s\in D\cup D'$. If $s\in S_{\mathscr{F}_1}$ then $Y_1^{D\cup D'}(s)=Y_1(s)$ is nonempty. 
Assume next that $s\in D\setminus S_{\mathscr{F}_1}$; the argument is similar when $s\in D'\setminus S_{\mathscr{F}_1}$.
To prove that $Y_1^{D\cup D'}(s)$ is nonempty, we show that $Y_1^D(s)\subseteq Y_1^{D\cup D'}(s)$. Let $y_1(s)\in Y_1^D(s)$, 
and assume that for some action $a_2\in A_2$ of player 2 we have $p((D\cup D')\mid s,y_1(s),a_2)<1$. 
Then, $p(D\mid s,y_1(s),a_2)<1$, and since $y_1(s)\in Y_1^D(s)$ we obtain $\E[v_2\mid s,y_1(s),a_2]<v_2(D)=v_2(D\cup D')$. This implies that $y_1(s)\in Y_1^{D\cup D'}(s)$, which yields $Y_1^D(s)\subseteq Y_1^{D\cup D'}(s)$, as desired.

We next prove the second part of (P'2). Let $s,s'\in D\cup D'$. If both $s$ and $s'$ belong to $D$ or both belong to $D'$, then \Eqref{eqcommforD} follows from (P'2) for $D$ or from (P'2) for $D'$, respectively. 
So, assume that, say, $s\in D\setminus D'$ and $s'\in D'\setminus D$. Let $s''\in D\cap D'$. Let $y$ be a perturbation of $Y^{D}$ such that $y$ satisfies \Eqref{eqcommforD} for the set $D$, for initial state $s$ and target state $s''$. As mentioned in Remark \ref{uniform-pert}, given $s'$, there is a perturbation $y'$ of $Y^{D'}$ such that $y'$ satisfies \Eqref{eqcommforD} for the set $D'$, for all initial states in $D'$, and target state $s'$. Then, playing $y$ in $D\setminus D'$ and playing $y'$ in $D'$ satisfies \Eqref{eqcommforD} for the set $D\cup D'$, for initial state $s$, and target state $s'$. Thus, $D\cup D'$ also satisfies (P'2).
\end{proof}

\begin{notation}[the family of sets $\mathscr{F}_2$]\rm  
Let $\mathscr{F}_2$ be the family of maximal sets in $\overline{\mathscr{F}}_2$ with respect to set inclusion.
\end{notation}
By Lemma~\ref{lemma:13}(4), the sets in $\mathscr{F}_2$ are disjoint,
and each set in $\overline{\mathscr{F}}_2$ is a subset of some set in $\mathscr{F}_2$.

\begin{notation}[the set of states $S_{{\mathscr{F}}_2}$]\rm Let 
\[ S_{{\mathscr{F}}_2} \,:=\, \cup_{D \in {\mathscr{F}}_2} D \]
be the set of all states that belong to some set in ${\mathscr{F}}_2$.
\end{notation}

The following lemma is the analog of Lemma 44 in Vieille \cite{vieille2000one}.

\begin{lemma}
\label{lemma:44}
For every $D \in \mathscr{F}_2$, the set $D$ is $Y^D$-controlled by player~1.
\end{lemma}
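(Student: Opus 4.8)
The plan is to produce a single $Y^D$-unilateral exit of player~1 from $D$ that is simultaneously good for both players, i.e.\ a triple $(s,a_1,y_2(s))\in\mathscr{E}_1(D,Y^D)$ with $\E[v_k\mid s,a_1,y_2(s)]\geq H_k(D,Y^D)$ for $k\in\{1,2\}$. Since $D\in\mathscr{F}_2\subseteq\overline{\mathscr{F}}_2$, parts~2 and~3 of Lemma~\ref{lemma:13} give that $D$ is $Y^D$-communicating and closed under $Y^D$. Closedness is worth isolating at the outset: because $p(D\mid s,y(s))=1$ for every $y(s)\in Y^D(s)$ and $Y_1^D(s)\neq\emptyset$ (by (P'2)), the certifying action in Definition~\ref{def:unilateral:exit} is automatic, so $\mathscr{E}_1(D,Y^D)$ consists exactly of the triples $(s,a_1,y_2(s))\in D\times A_1\times Y_2(s)$ with $p(D\mid s,a_1,y_2(s))<1$. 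This observation removes any dependence on the (possibly non-compact) sets $Y_1^D(s)$ from the exit analysis.

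First I would dispose of player~2's requirement by showing $H_2(D,Y^D)=v_2(D)$; the inequality $\ge$ is \Eqref{equ:91}, so I only need $\E[v_2\mid s,y_1(s),a_2]\le v_2(D)$ for every triple in $D\times A_2\times Y_1^D(s)$. When the triple stays in $D$ this equals $v_2(D)$ by (P'1); when it leaves $D$ and $s\notin S_{\mathscr{F}_1}$ it is $<v_2(D)$ by the defining property of $Y_1^D(s)$ in Notation~\ref{def:Y_1D}. The remaining case $s\in C\subseteq D$, $C\in\mathscr{F}_1$ (inclusion by Lemma~\ref{lemma:13}(1)), is handled by the key fact that $q^{\mathscr{F}_1}_C$ is supported in $D$: the transition $\widehat p(\cdot\mid s,\cdot)=\tfrac12 q^{\mathscr{F}_1}_C+\tfrac12 U_C$ is action-independent and, by Lemma~\ref{lemma:13}(2), lands in $D$ with probability~$1$, forcing $q^{\mathscr{F}_1}_C(D)=1$. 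Since $v_2\equiv v_2(D)$ on $D$, \Eqref{exp-q} then gives $v_2(D)=\E[v_2\mid q^{\mathscr{F}_1}_C]\ge H_2(C,Y)\ge v_2(C)=v_2(D)$, so $H_2(C,Y)=v_2(D)$; as any leaving triple at such $s$ is a $Y$-unilateral exit of player~2 from $C$, its value is at most $H_2(C,Y)=v_2(D)$. Hence $H_2(D,Y^D)=v_2(D)$, and by Lemma~\ref{lemma:expected:minmax} every exit in $\mathscr{E}_1(D,Y^D)$ automatically satisfies $\E[v_2\mid s,a_1,y_2(s)]\ge v_2(s)=v_2(D)=H_2(D,Y^D)$. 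Thus the statement reduces to finding a player-1 exit attaining $H_1(D,Y^D)$.

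To find such an exit I would split on $H_1(D,Y^D)$. Every triple that keeps the play in $D$ has value $\le\max_{s'\in D}v_1(s')$, all its mass lying in $D$; therefore $H_1(D,Y^D)$ equals the larger of $\max_{s'\in D}v_1(s')$ and the supremum of $\E[v_1\mid\cdot]$ over $\mathscr{E}_1(D,Y^D)$. In Case~A this supremum is $\ge\max_{s'\in D}v_1(s')$, so there is an exit meeting the hypothesis of Lemma~\ref{maxexit}; applying that lemma (its proof needs only the compactness of $Y_2(s)$, since certification is automatic, so one may harmlessly replace each $Y_1^D(s)$ by its closure to meet the stated compactness hypothesis) yields a best exit attaining the supremum, which equals $H_1(D,Y^D)$. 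By the previous paragraph this exit $Y^D$-controls $D$ for player~1, as required. In Case~B the supremum is $<\max_{s'\in D}v_1(s')$, i.e.\ $D$ is $Y^D$-blocked to player~1, and this case must be excluded.

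Case~B is the crux and the step I expect to be hardest. Here the plan is to contradict the maximality of $D$ in $\overline{\mathscr{F}}_2$. I would first argue that $v_1$ is constant on $D$, using $Y^D$-communication (Lemma~\ref{lemma:13}(3)) together with the blocking hypothesis, and then adjoin to $D$ the states toward which player~1's value-preserving play leads, checking that (P'1) and (P'2) persist so that the enlargement is again a member of $\overline{\mathscr{F}}_2$ (Lemma~\ref{lemma:13}(4) being available to amalgamate overlapping candidates). The genuine difficulty, which has no counterpart in Vieille~\cite{vieille2000one} where the limiting object is a single stationary strategy, is that $Y^D$ itself varies with $D$: enlarging $D$ alters the constraint sets $Y_1^D(s)$ at states of $\widehat S_0$, so communication and closedness for the enlarged set cannot simply be inherited and must be re-established. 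This is precisely why Lemma~\ref{lemma:42} cannot be invoked verbatim ($Y_1^D(s)$ and $Y_1(s)$ are in general incomparable, and the $\widehat p$-closure of $D$ under the full set $Y$ may fail), and why the enlargement has to be carried out directly within the family $\overline{\mathscr{F}}_2$.
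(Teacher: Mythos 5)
Your reduction is sound and matches the paper's: the observation that closedness of $D$ under $Y^D$ (Lemma~\ref{lemma:13}(2)) makes every leaving triple $(s,a_1,y_2(s))$ with $y_2(s)\in Y_2(s)$ an element of $\mathscr{E}_1(D,Y^D)$ is exactly the paper's Step~1; your three-case computation of $H_2(D,Y^D)=v_2(D)$ (including the observation $q^{\mathscr{F}_1}_C(D)=1$) is the paper's Step~3 for $k=2$; and your Case~A is the paper's Step~3 for $k=1$ via Lemma~\ref{maxexit}. The gap is Case~B, which you correctly identify as the crux but do not carry out, and the route you sketch for it would fail. First, your opening move --- that $v_1$ is constant on $D$ --- is not provable from $Y^D$-communication plus blocking: Lemma~\ref{lemma:expected:minmax} gives the submartingale property of $v_1$ only against mixed actions of player~1 drawn from $Y_1(s)$, whereas the player-1 components of $Y^D$ lie in $Y_1^D(s)$, which you yourself note is incomparable with $Y_1(s)$; and communication alone never forces constancy of a player's own maxmin value, since travelling between states requires the opponent's cooperation. (Indeed the paper nowhere asserts $v_1$ is constant on $D\in\mathscr{F}_2$; only (P'1), constancy of $v_2$, is part of the definition.) Second, blocking to player~1 has no evident interaction with maximality of $D$ in $\overline{\mathscr{F}}_2$, whose defining properties (P'1)--(P'2) concern player~2's value and communication under $\widehat p$; it is unclear which states "player~1's value-preserving play leads toward" when, by hypothesis, every unilateral exit of player~1 strictly lowers $v_1$, so there is no candidate enlargement.

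The paper's actual resolution of Case~B is to invoke precisely the lemma you dismiss, Lemma~\ref{lemma:42}, but applied not to $D$ with the sets $Y^D$ but to the level set $D^{max}:=\{s\in D\colon v_1(s)=w_1^D\}$ with the full sets $Y$. This sidesteps both of your objections. Condition (C3) holds by construction and (P'1). Condition (C2), that $D^{max}$ is $Y$-blocked to player~1, follows because any $Y$-unilateral exit of player~1 from $D^{max}$ either leaves $D$ with positive probability --- and is then, by your own Step-1 observation, a $Y^D$-unilateral exit from $D$, hence value-decreasing by the Case-B hypothesis --- or stays in $D$ but places positive mass on $D\setminus D^{max}$, where $v_1<w_1^D$. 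Condition (C1), closedness of $D^{max}$ under $\widehat p$ for all $y(s)\in Y(s)$, is where the paper uses \Eqref{equ:81} (legitimate here since $y_1(s)\in Y_1(s)$) to get $\E[v_1\mid s,y(s)]\geq v_1(s)=w_1^D$, and then the Case-B hypothesis again to rule out any mass escaping $D$, forcing all transitions into $D^{max}$; at states of $S_{\mathscr{F}_1}$ one instead uses action-independence of $\widehat p$ and \Eqref{exp-q}. Lemma~\ref{lemma:42} then yields the contradiction, which ultimately traces back to the maximality property (P4) of the family $\mathscr{F}_1$ --- not to maximality in $\overline{\mathscr{F}}_2$.
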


\begin{proof}
Fix $D \in \mathscr{F}_2$. As $\mathscr{F}_2\subseteq \overline{\mathscr{F}}_2$, we have $D\in \overline{\mathscr{F}}_2$. Denote $w^D_1=\max_{s \in D} v_1(s)$.\medskip

\noindent\textbf{Step 1:} If a triplet $(s,a_1,y_2(s))\in D\times A_1\times Y_2(s)$ satisfies $p(D\mid s,a_1,y_2(s))<1$, then $(s,a_1,y_2(s))\in \mathscr{E}_1(D,Y^D)$.\smallskip

Assume that for some $(s,a_1,y_2(s))\in D\times A_1\times Y_2(s)$ we have $p(D\mid s,a_1,y_2(s))<1$. Let $y_1(s)\in Y_1^D(s)$. By Lemma~\ref{lemma:13}(\ref{p3l13}),  $p(D\mid s,y_1(s),y_2(s))=1$. This implies that $(s,a_1,y_2(s))\in \mathscr{E}_1(D,Y^D)$. \medskip

\noindent\textbf{Step 2:} There is a unilateral exit $(s,a_1,y_2(s)) \in \mathscr{E}_1(D,Y^D)$ such that $\E[v_1\mid s,a_1,y_2(s)] \,\geq\, w_1^D$.

Suppose the opposite: for each $(s,a_1,y_2(s))\in\mathscr{E}_1(D,Y^D)$,
\begin{equation}
\label{contra}
\E[v_1\mid s,a_1,y_2(s)] < w_1^D.
\end{equation}
Denote the set of states in $D$ where player 1's maxmin value is maximal by
\[ D^{max} := \{s \in D \colon v_1(s) = w_1^D\}. \]
We will prove that $D^{max}$ satisfies (C1)--(C3), which is impossible due to Lemma \ref{lemma:42}.\smallskip

\noindent\textsc{Property~(C3) for $D^{max}$:} 
The property holds for player 1 by the definition of $D^{max}$ and for player 2 by (P'1).\smallskip

\noindent\textsc{Property~(C2) for $D^{max}$:} Take an arbitrary unilateral exit $(s,a_1,y_2(s))\in \mathscr{E}_1(D^{max},Y)$. We need to show that
\begin{equation}
\label{toshowC2}
\E[v_1\mid s,a_1,y_2(s)] \,<\,w_1^D.
\end{equation}

If $p(D \mid s,a_1,y_2(s)) < 1$, then by Step 1, $(s,a_1,y_2(s))\in \mathscr{E}_1(D,Y^D)$, and consequently, \Eqref{contra} implies \Eqref{toshowC2}.
Otherwise, $p(D \mid s,a_1,y_2(s)) = 1$ holds, and then $p(D^{max}\mid s,a_1,y_2(s)) < 1$ implies \Eqref{toshowC2}.\smallskip

\noindent\textsc{Property~(C1) for $D^{max}$:}
Take $s \in D^{max}$ and $y(s) \in Y(s)$.
We need to show that $\widehat p(D^{max} \mid s,y(s)) = 1$. We distinguish between two cases.\smallskip

\noindent Case 1: $s\in S_{\mathscr{F}_1}$, i.e., $s \in C$ for some $C\in\mathscr{F}_1$. 
In this case,
(i) by the definition of $\widehat p$, we have $\widehat p(\cdot \mid s,y(s)) = \frac{1}{2}q^{\mathscr{F}_1}_C + \frac{1}{2}U_C$;
(ii) by Lemma~\ref{lemma:13}(\ref{p1l13}) we have $C \subseteq D$, and therefore, 
since $s\in D^{max}$ and by (P2) for $C$, we have $C\subseteq D^{max}$; and (iii) by (P'2) for $D$ and the fact that the actions in states in $C$ do not influence the transitions under $\widehat p$, it follows that $\widehat p(D\mid s,y(s))=1$. 

By \Eqref{exp-q}, the fact that $U_C$ only induces transitions to states in $C$,
and points (i) and (ii), 
\[ \E_{\widehat p}[v_1 \mid s,y(s)] \,\geq\, v_1(C)\, =\,w_1^D. \]
Hence, by point (iii), we have $\widehat p(D^{max} \mid s,y(s)) = 1$, as desired.\smallskip

\noindent Case 2: $s \notin S_{\mathscr{F}_1}$. 
In this case, $\widehat p(\cdot \mid s,y(s)) = p(\cdot \mid s,y(s))$. By Eq.~\eqref{equ:81},
\begin{equation}
\label{atleastv1}
\E[v_1 \mid s,y(s)] \,\geq\, v_1(s).
\end{equation} 
Take any $a_1\in\supp(y_1(s))$. 

We show that: (a) if $p(D\mid s,a_1,y_2(s))=1$ then $\E[v_1\mid s,a_1,y_2(s)]\leq v_1(s)$, while (b) if $p(D\mid s,a_1,y_2(s))<1$ then $\E[v_1\mid s,a_1,y_2(s)]<v_1(s)$. By \Eqref{atleastv1} and the fact that $s\in D^{max}$, these will imply that $\widehat p(D^{max} \mid s,y(s))= p(D^{max} \mid s,y(s)) = 1$, as desired.

As $s\in D^{max}$, the claim of (a) follows. 
To show that (b) holds, assume $p(D\mid s,a_1,y_2(s))<1$. 
By Step 1 and \Eqref{contra}, $\E[v_1\mid s,a_1,y_2(s)]<w_1^D=v_1(s)$.\medskip

\noindent\textbf{Step 3:} 
The set $D$ is $Y^D$-controlled by player~1.

By Lemma~\ref{lemma:13}(\ref{coml13}), the set $D$ is $Y^D$-communicating.
It remains to show that there is a unilateral exit in $\mathscr{E}_1(D,Y^D)$ that yield to both players high payoff.

In view of Step~2, we can apply Lemma \ref{maxexit}, and therefore there is an exit $(s,a_1,y_2(s))\in \mathscr{E}_1(D,Y^D)$ that maximizes $\E[v_1\mid\cdot\,]$ among all exits in $\mathscr{E}_1(D,Y^D)$. 
By Step 2 and respectively \Eqref{equ:81} and (P'1),
\begin{equation}
\label{atleastw}
\E[v_1\mid s,a_1,y_2(s)]\,\geq\, w_1^D\quad\text{ and }\quad\E[v_2\mid s,a_1,y_2(s)]\,\geq\, v_2(D).
\end{equation}
It remains to show that for each player $k\in\{1,2\}$,
\begin{equation}
\label{claimstep3}
\E[v_k\mid s,a_1,y_2(s)] \,\geq\, H_k(D,Y^D),
\end{equation}
which will complete the proof of Step 3.

First we prove \Eqref{claimstep3} for $k=1$ by showing a stronger statement: $\E[v_1\mid s,a_1,y_2(s)]= H_1(D,Y^D)$. By \Eqref{defH}, we need to show that for each $(s',a'_1,y'_2(s'))\in D\times A_1\times Y_2(s')$ we have
\begin{equation}
\label{eqpl1}
\E[v_1\mid s,a_1,y_2(s)]\,\geq\,\E[v_1\mid s',a'_1,y'_2(s')].
\end{equation}
If $p(D\mid s',a'_1,y'_2(s'))=1$ then $\E[v_1\mid s',a'_1,y'_2(s')]\,\leq\,w_1^D$, and \Eqref{atleastw} implies \Eqref{eqpl1}. If $p(D\mid s',a'_1,y'_2(s'))<1$, then by Step 1 we have $(s',a'_1,y'_2(s'))\in\mathscr{E}_1(D,Y^D)$, and hence the choice of $(s,a_1,y_2(s))$ implies \Eqref{eqpl1}.

Next we prove \Eqref{claimstep3} for $k=2$. By \Eqref{atleastw}, it suffices to show that 
$H_2(D,Y^D)\,\leq\, v_2(D)$, i.e., that for each  $(s',y'_1(s'),a'_2)\in D\times Y^D_1(s')\times A_2$  we have
\begin{equation}
\label{ineqpl2}
\E[v_2\mid s',y'_1(s'),a'_2]\,\leq\,v_2(D).
\end{equation} 
We distinguish between three cases.\smallskip

\noindent\textsc{Case 1:} $p(D\mid s', y'_1(s'),a'_2)=1$. In this case, \Eqref{ineqpl2} holds with equality 
because $v_2(s)$ is contant over $D$.
\smallskip

\noindent\textsc{Case 2:} $p(D\mid s',y'_1(s'),a'_2)<1$ and $s'\notin S_{\mathscr{F}_1}$. In this case, as $y'_1(s')\in Y_1^D(s')$ we obtain \Eqref{ineqpl2}
by the definition of $Y^D_1(s')$.\smallskip

\noindent\textsc{Case 3:} $p(D\mid s',y'_1(s'),a'_2)<1$ and $s'\in C$ for some $C\in\mathscr{F}_1$. In this case, $y'_1(s')\in Y_1^D(s')=Y_1(s')$ by the definition of $Y_1^D(s')$. 
Note that $q_C^{\mathscr{F}_1}(D)=1$, i.e., $q_C^{\mathscr{F}_1}$ only induces transitions to $D$, which follows by (P'2) and by the fact that $\widehat p$ assigns probability $\frac{1}{2}$ to $q_C^{\mathscr{F}_1}$ in state $s'$.
Therefore,
\[\E[v_2\mid s',y'_1(s'),a'_2]\,\leq\, H_2(C,Y)\,\leq\,\E[v_2\mid q_C^{\mathscr{F}_1}]\,\leq\,v_2(D),\]
where the first inequality holds by \Eqref{defH}, the second inequality holds by \Eqref{exp-q}, and the last inequality holds since $q_C^{\mathscr{F}_1}(D)=1$. 
\Eqref{ineqpl2} follows.
\end{proof}

\begin{notation}[the exit distribution $q_D^{\mathscr{F}_2}$]
\rm
By Lemma~\ref{lemma:44},
for every  $D \in \mathscr{F}_2$ there exists a unilateral exit $(s,a_1,y_2(s))\in\mathscr{E}_1(D,Y^D)$ such that $\E[v_k \mid s,a_1,y_2(s)] \geq H_k(D, Y^D)$ for each player $k\in\{1,2\}$. 
Let $q^{\mathscr{F}_2}_D\in \Delta(S)$ be the corresponding exit distribution from $D$:
\begin{equation}
\label{choiceexitD}
q^{\mathscr{F}_2}_D \,:=\, p(\cdot \mid s,a_1,y_2(s)).
\end{equation} 
\end{notation}
Thus, we have for each $D\in\mathscr{F}_2$ and each player $k\in\{1,2\}$,
\begin{equation}
\label{exp-q2}
\E[v_k\mid q^{\mathscr{F}_2}_D]\,\geq\,H_k(D,Y^D)\,\geq\,\max_{s'\in D}v_k(s'),
\end{equation}
where the last inequality holds by \Eqref{equ:91}. 
We note that if $(s,a_1,y_2(s))$ is the exit on the right-hand side of \Eqref{choiceexitD},
then $s\notin S_{\mathscr{F}_1}$. Indeed, assume the opposite: $s\in C$ for some $C\in \mathscr{F}_1$. Then, $Y_1^D(s)=Y_1(s)$  and by Lemma \ref{lemma:13}(\ref{p1l13}) we have $C\subseteq D$. Hence, $(s,a_1,y_2(s))\in\mathscr{E}_1(C,Y)$, and then (P3) implies $\E[v_1\mid s,a_1,y_2(s)]<v_1(C)$. This is impossible due to \Eqref{atleastw}.

\begin{notation}[the perturbation $y^D$ and the stationary strategy $z_1$]
\label{notz1} \rm

By Remark \ref{uniform-pert}, there is a perturbation $y^D$ of $Y^D$ that satisfies \Eqref{eqcommforD} for all initial states $s'\in D$ and for state $s$ as the target state:
\begin{equation}
\label{propyD}
\prob_{\widehat{p},s',y^D}\big(\theta_D^{exit} = \infty\text{ and }\exists t\in\mathbb{N}: s^t=s\big) \,=\, 1.
\end{equation}
That is, $y^D$ allows moving from any state in $D$ to $s$ under $\widehat p$ without leaving $D$. 

Since $y^D$ is a perturbation of $Y^D$, for each state $s'\in D$ there is a mixed action $\overline y^D_1(s')\in Y_1^D(s')$ such that $\supp(\overline y^D_1(s'))\subseteq \supp(y^D_1(s'))$. Note that, for each $s'\in D\cap S_{\mathscr{F}_1}$, we have $\overline y^D_1(s')\in Y_1^D(s')=Y_1(s')$ by the definition of $Y^D_1(s')$.

Let $z_1^D$ be a stationary strategy for player 1 such that $z^D_1$ coincides with $\overline y_1^D$ on $D \setminus \{s\}$ and $z^D_1(s)\in Y_1^D(s)$ is arbitrary. Since the elements of $\mathscr{F}_2$ are pairwise disjoint, we can assume that $z_1^D$ is independent of $D\in\mathscr{F}_2$, and we denote this stationary strategy by $z_1$.
\end{notation}

\begin{lemma}\label{lemz1}The following properties hold for $z_1$:
\begin{enumerate}
\item\label{p1lemz1} For each 
$D \in \mathscr{F}_2$ and each
$s\in D$
we have $z_1(s)\in Y_1^D(s)$.
\item\label{p2lemz1} For each $s\in S_{\mathscr{F}_1}\cap S_{\mathscr{F}_2}$ we have $z_1(s)\in Y_1(s)$.
\item\label{p3lemz1} For each $D\in\mathscr{F}_2$ there is a perturbation $w_1$ of $z_1$ and a perturbation $w_2$ of $Y_2$ such that: 
\begin{itemize}
    \item[(a)] In each state $s\in D$, under $\widehat p$, the pair of mixed action $(w_1(s),w_2(s))$ induces transitions to states in $D\cup
\supp(q_D^{\mathscr{F}_2})$ only:
\[ \widehat p( D\cup
\supp(q_D^{\mathscr{F}_2})\mid s,w_1(s),w_2(s))\,=\,1.\]
\item[(b)] Regardless of the initial state in $D$, the strategy pair $(w_1,w_2)$ leads outside of $D$ with probability 1 under $\widehat p$: For each $s\in D$,
\[\prob_{\widehat p,s,(w_1,w_2)}(\theta^{exit}_D<\infty)\,=\,1.\]
\end{itemize}
\end{enumerate}
\end{lemma}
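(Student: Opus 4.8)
The plan is to read off Parts~\ref{p1lemz1} and~\ref{p2lemz1} from the construction of $z_1$ in Notation~\ref{notz1}, and to establish Part~\ref{p3lemz1} by steering the play toward the exit state with the navigation perturbation $y^D$ and inserting the exit action there. For Part~\ref{p1lemz1}, fix $D\in\mathscr{F}_2$ and $s'\in D$. By construction $z_1$ agrees on $D$ with $z_1^D$ (the sets of $\mathscr{F}_2$ being pairwise disjoint), and $z_1^D$ equals $\overline y^D_1$ on $D$ minus the target state, with $\overline y^D_1(s')\in Y_1^D(s')$, while at the target state its value lies in $Y_1^D$ by fiat; hence $z_1(s')\in Y_1^D(s')$. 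Part~\ref{p2lemz1} is then immediate: for $s'\in S_{\mathscr{F}_1}\cap S_{\mathscr{F}_2}$ choose $D\in\mathscr{F}_2$ containing $s'$, so that $z_1(s')\in Y_1^D(s')=Y_1(s')$, the last equality being the definition of $Y_1^D(s')$ for $s'\in S_{\mathscr{F}_1}$ in Notation~\ref{def:Y_1D}.

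For Part~\ref{p3lemz1}, fix $D\in\mathscr{F}_2$, write $(s,a_1,y_2(s))\in\mathscr{E}_1(D,Y^D)$ for the exit that produced $q_D^{\mathscr{F}_2}=p(\cdot\mid s,a_1,y_2(s))$ in \Eqref{choiceexitD}, and let $y^D=(y^D_1,y^D_2)$ be the perturbation of $Y^D$ satisfying \Eqref{propyD} with $s$ as target. I would set $(w_1,w_2):=y^D$ on $D\setminus\{s\}$, put $w_2(s):=y_2(s)$, and let $w_1(s)$ be a strict mixture of $a_1$ with $z_1(s)$, so that $\supp(w_1(s))=\{a_1\}\cup\supp(z_1(s))$; off $D$ I would keep $w_1=z_1$ and pick $w_2$ inside $Y_2$. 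The perturbation requirements hold because $\supp(z_1(s'))=\supp(\overline y^D_1(s'))\subseteq\supp(y^D_1(s'))$ on $D\setminus\{s\}$ (so $w_1$ is a perturbation of $z_1$), $w_2=y^D_2$ is a perturbation of $Y_2$ there, and at $s$ we have $\supp(z_1(s))\subseteq\supp(w_1(s))$ and $w_2(s)=y_2(s)\in Y_2(s)$.

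To check property~(a), note that at every $s'\in D\setminus\{s\}$ relation \Eqref{propyD} gives $\widehat p(D\mid s',y^D(s'))=1$, so all mass stays in $D$. At $s$ we use that $s\notin S_{\mathscr{F}_1}$ (recorded after \Eqref{choiceexitD}), whence $\widehat p=p$ there; since $(z_1(s),y_2(s))\in Y^D(s)$, Lemma~\ref{lemma:13}(\ref{p3l13}) yields $p(D\mid s,z_1(s),y_2(s))=1$, forcing $p(D\mid s,a,y_2(s))=1$ for every $a\in\supp(z_1(s))$, while the remaining action $a_1$ sends the play according to $q_D^{\mathscr{F}_2}$; hence every transition from $s$ lands in $D\cup\supp(q_D^{\mathscr{F}_2})$.

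Property~(b) is the crux. The idea is that before the first visit to $s$, and after any visit to $s$ that does not leave $D$, the pair $(w_1,w_2)$ coincides with $y^D$ at the states actually visited, so \Eqref{propyD} forces the play to return to $s$ with probability~$1$ while staying in $D$; and each time the play sits at $s$ it leaves $D$ at the next stage with probability exactly $c:=w_1(s,a_1)\cdot p\bigl((S\setminus D)\mid s,a_1,y_2(s)\bigr)$, which is positive because $(s,a_1,y_2(s))$ is an exit. Letting $\tau_1<\tau_2<\cdots$ denote the successive visit times to $s$, the strong Markov property gives $\prob_{\widehat p,s',(w_1,w_2)}(\theta^{exit}_D=\infty)\leq(1-c)^k$ for every $k$, since surviving in $D$ requires not escaping at any of the first $k$ visits; letting $k\to\infty$ yields $\prob_{\widehat p,s',(w_1,w_2)}(\theta^{exit}_D<\infty)=1$ for every initial state $s'\in D$. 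The main obstacle is exactly this recurrence-plus-geometric-decay step: one must argue carefully that on the event of never leaving $D$ the target state $s$ is hit infinitely often—so that all $\tau_k$ are finite—using that the dynamics away from $s$ are governed by $y^D$, and that each visit carries the uniform escape probability $c$ independently of the past.
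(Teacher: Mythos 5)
Your proposal is correct and follows essentially the same route as the paper: Parts 1 and 2 are read off directly from Notation~\ref{notz1}, and for Part 3 you construct exactly the same pair $(w_1,w_2)$ — equal to $y^D$ on $D\setminus\{s\}$ and mixing the exit action $a_1$ into $z_1(s)$ at the exit state $s$ — with (a) verified via \Eqref{propyD}, Lemma~\ref{lemma:13}(2), and $\widehat p(\cdot\mid s,a_1,y_2(s))=q_D^{\mathscr{F}_2}$. Your recurrence-plus-geometric-decay argument for (b) simply makes explicit the standard reasoning the paper compresses into one line, and it is sound.
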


\begin{proof}
Properties 1 and 2 follow directly from the definition of $z_1$. 

We argue that Property 3 holds as well. Let $(s,a_1,y_2(s))$ be the exit chosen for $D$ in \Eqref{choiceexitD}. Recall that $s\notin S_{\mathscr{F}_1}$. This implies that $\widehat p$ and $p$ coincide in state $s$. Define $(w_1(s'),w_2(s'))=y^D(s')$ for each $s'\in D\setminus \{s\}$, and $(w_1(s),w_2(s))=(\frac{1}{2}z_1(s)+\frac{1}{2}a_1,y_2(s))$. 

Part (a) of Property 3 follows (i) for states in $D\setminus\{s\}$ by \Eqref{propyD}, (ii) for $(z_1(s),y_2(s))$ in state $s$ by Lemma \ref{lemma:13}(\ref{p3l13}), and (iii) for $(a_1,y_2(s))$ in state $s$ since $\widehat p(\cdot\mid s,a_1,y_2(s))=p(\cdot\mid s,a_1,y_2(s))=q_D^{\mathscr{F}_2}$. 

Part (b) of Property 3 follows by \Eqref{propyD} and since $\widehat p(\cdot\mid s,a_1,y_2(s))=q_D^{\mathscr{F}_2}$.
\end{proof}

\subsection{Constructing a Third Family of Sets}\label{sec-thirdfam}

In this section, we construct a third and final family of subsets of $S_0$.

In view of Lemma \ref{lemma:13}(\ref{p1l13}), and because the elements of $\mathscr{F}_2$ are pairwise disjoint, it follows that for every $C\in\mathscr{F}_1$, we have either $C\subseteq D$ for a unique $D\in\mathscr{F}_2$ or $C\cap D=\emptyset$ for all $D\in\mathscr{F}_2$. 
The latter condition is equivalent to $C \cap S_{\mathscr{F}_2} = \emptyset$.

\begin{definition}[the family of sets $\mathscr{F}_3$]
Let $\mathscr{F}_3$ be the family of sets that consists of all sets in $\mathscr{F}_2$ and all sets $C\in \mathscr{F}_1$ that are disjoint of $S_{\mathscr{F}_2}$:
\[ \mathscr{F}_3 \,:=\, \mathscr{F}_2 \cup \{ C \in \mathscr{F}_1 \colon C \cap S_{\mathscr{F}_2} = \emptyset\}. \]
\end{definition}
By definition, the elements of $\mathscr{F}_3$ are pairwise disjoint. 

\begin{notation}[the set of states $S_{{\mathscr{F}}_3}$]\rm Let 
\[ S_{{\mathscr{F}}_3} := \cup_{E \in {\mathscr{F}}_3} E = S_{{\mathscr{F}}_1} \cup S_{{\mathscr{F}}_2}\]
be the set of all states that belong to some set in ${\mathscr{F}}_3$.
\end{notation}


\begin{notation}[the transition function $\widetilde p$]\rm 
Define a transition function $\widetilde p$ as follows: for each state $s\in S$ and each action pair $a\in A$,
\[ \widetilde p(\cdot \mid s,a) := \left\{
\begin{array}{lll}
\frac{1}{2}q^{\mathscr{F}_2}_D+\frac{1}{2}U_D, & \ \ \ \ \ & \text{if }s \in D\text{ for some } D\in \mathscr{F}_2\text{ (i.e., if $s\in S_{\mathscr{F}_2}$),}\\[0.1cm]
\frac{1}{2}q^{\mathscr{F}_1}_C+\frac{1}{2}U_C, & & \text{if }s \in C\in\mathscr{F}_1\text{ with } C\cap S_{\mathscr{F}_2}=\emptyset,\\[0.1cm]
p(s,a), & & \text{otherwise (i.e., if $s\notin S_{\mathscr{F}_3}$).}
\end{array}
\right.
\]
\end{notation}

The transition function $\widetilde p$ is well defined, because the elements of $\mathscr{F}_3$ are pairwise disjoint. 
Two useful properties of $\widetilde p$ that are evident from its definition are the following:
\begin{itemize}
\item $\widetilde p$ coincides with $\widehat p$ on $S\setminus S_{\mathscr{F}_2}$.
\item $\widetilde p$ is independent of the players' actions on  $S_{\mathscr{F}_3}$.
\end{itemize}

\begin{notation}[the set $Y^{x_1}$]\rm 
When $x_1 = (x_1(s))_{s\in S}$ is a stationary strategy of player~1,
we denote 
\[ Y^{x_1}\, :=\prod_{s\in S_0}\Big(\{x_1(s)\}\times Y_2(s)\Big). \]
Since $Y$ is compact, so is $Y^{x_1}$.
\end{notation}

The following lemma is the analog of Lemma~\ref{lemma:42} to the family $\mathscr{F}_3$,
see Lemma 45 in Vieille \cite{vieille2000one}.

\begin{lemma}
\label{lemma:45}
There exist no set $F \subseteq S_0$ and no stationary strategy $x_1$ of player~1 such that the following three properties are satisfied simultaneously:
\begin{enumerate}
\item[(C'1)]    $\widetilde p(F \mid s,y(s)) = 1$ for every $s \in F$ and every $y(s) \in Y^{x_1}(s)$.
\item[(C'2)]    $F$ is $Y^{x_1}$-blocked to player~2 under $\widetilde p$.
\item[(C'3)]    Player 2's maxmin value $v_2$ is constant over $F$, denoted by $v_2(F)$.
\end{enumerate}
\end{lemma}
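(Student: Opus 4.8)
The plan is to argue by contradiction, mirroring the proof of Lemma~\ref{lemma:42}: assuming a pair $(F,x_1)$ satisfying (C'1)--(C'3) exists, I would extract a communicating core $\overline F\subseteq F$, show that $\overline F\in\overline{\mathscr{F}}_2$, and then contradict the maximality of the sets in $\mathscr{F}_2$. Throughout, $v_2(s)=v_2(F)$ for $s\in F$ by (C'3).

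First I would construct $\overline F$ as in Step~1 of Lemma~\ref{lemma:42}, but relative to $\widetilde p$ and $Y^{x_1}$: put an arc $s\to s'$ on $F$ whenever some perturbation $y(s)$ of $Y^{x_1}(s)$ satisfies $\widetilde p(F\mid s,y(s))=1$ and $\widetilde p(s'\mid s,y(s))>0$, and let $\overline F$ be a maximal strongly connected component. Then (a) $\overline F$ is $Y^{x_1}$-communicating under $\widetilde p$, and (b) every perturbation $y(s)$ of $Y^{x_1}(s)$ with $s\in\overline F$ and $\widetilde p(F\mid s,y(s))=1$ keeps the play in $\overline F$. Using that $\widetilde p$ is action-independent on $S_{\mathscr{F}_3}$ and spreads mass over the entire current cell through its uniform component, (C'1) and (b) would yield the structural facts that for every $E\in\mathscr{F}_3$ (in particular every $D\in\mathscr{F}_2$) either $E\subseteq\overline F$ or $E\cap\overline F=\emptyset$; moreover, applying (C'1) and (b) at a state of such a $D\subseteq\overline F$ and using $U_D(\overline F)=1$ forces $q^{\mathscr{F}_2}_D(\overline F)=1$, i.e.\ the $\mathscr{F}_2$-exit of $D$ lands inside $\overline F$ (and likewise $q^{\mathscr{F}_1}_C(\overline F)=1$ for $C\in\mathscr{F}_3$ with $C\subseteq\overline F$).

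The core is to show $\overline F\in\overline{\mathscr{F}}_2$. Property (P'1) is immediate from (C'3). For (P'2) I would first check that $Y^{\overline F}$ is nonempty: on $\overline F\cap S_{\mathscr{F}_2}$ the inclusion $Y_1^{D}(s)\subseteq Y_1^{\overline F}(s)$ holds for the $D\in\mathscr{F}_2$ with $D\subseteq\overline F$ containing $s$, exactly as in the proof of Lemma~\ref{lemma:13}(\ref{p4l13}); on $S_{\mathscr{F}_1}$ one has $Y_1^{\overline F}(s)=Y_1(s)\neq\emptyset$. On $\overline F\setminus S_{\mathscr{F}_3}$, where $\widetilde p=\widehat p=p$, I would prove the key claim $x_1(s)\in Y_1^{\overline F}(s)$: given $a_2$ with $p(\overline F\mid s,x_1(s),a_2)<1$, either $p(F\mid s,x_1(s),a_2)<1$, whence $(s,a_2,x_1(s))\in\mathscr{E}_{2,\widetilde p}(F,Y^{x_1})$ (a keeping-in-$F$ companion action for player~2 exists by (C'1)) and the blocking hypothesis (C'2) gives $\E[v_2\mid s,x_1(s),a_2]=\E_{\widetilde p}[v_2\mid s,x_1(s),a_2]<v_2(F)=v_2(s)$; or $p(F\mid s,x_1(s),a_2)=1$, in which case (b) applied to $y(s)=(x_1(s),\tfrac12 a_2+\tfrac12 y_2(s))$ forces $p(\overline F\mid s,x_1(s),a_2)=1$, contradicting the choice of $a_2$, so this case is vacuous. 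For the communication part of (P'2) under $\widehat p$, I would transfer the $\widetilde p$-communication of (a): since $\widetilde p$ coincides with $\widehat p$ off $S_{\mathscr{F}_2}$, the perturbation works verbatim there (it is a $Y^{\overline F}$-perturbation because $x_1(s)\in Y_1^{\overline F}(s)$ off $S_{\mathscr{F}_3}$, and transitions are action-independent with $Y_1^{\overline F}=Y_1$ on $S_{\mathscr{F}_1}$); inside each $D\in\mathscr{F}_2$ with $D\subseteq\overline F$ I would replace the action-independent $\tfrac12 U_D+\tfrac12 q^{\mathscr{F}_2}_D$ move of $\widetilde p$ by genuine $\widehat p$-moves, navigating within $D$ by (P'2) for $D$ and leaving it through the explicit player-1 exit action of \Eqref{choiceexitD} (played at the exit state, which is not in $S_{\mathscr{F}_1}$), which lands in $\overline F$ since $q^{\mathscr{F}_2}_D(\overline F)=1$, as in the construction of Notation~\ref{notz1}. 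This produces a perturbation of $Y^{\overline F}$ realizing \Eqref{eqcommforD}, so $\overline F\in\overline{\mathscr{F}}_2$.

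It remains to conclude. By Lemma~\ref{lemma:13}(\ref{p4l13}) and the definition of $\mathscr{F}_2$ as the maximal sets of $\overline{\mathscr{F}}_2$, the set $\overline F$ lies in some $D'\in\mathscr{F}_2$; since $\overline F$ meets $D'$, the structural fact of Step~1 gives $D'\subseteq\overline F$, whence $\overline F=D'$. But then the $\mathscr{F}_2$-exit satisfies $q^{\mathscr{F}_2}_{D'}(\overline F)=q^{\mathscr{F}_2}_{D'}(D')=1$, contradicting the fact that it is a genuine exit of $D'$, so that $q^{\mathscr{F}_2}_{D'}(S\setminus D')>0$ by \Eqref{choiceexitD}. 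I expect the main obstacle to be the communication step of (P'2): it requires simulating the action-independent $\mathscr{F}_2$-transitions of $\widetilde p$ by honest $\widehat p$-perturbations inside the $\mathscr{F}_2$-cells while staying in $\overline F$, and verifying that the substitute $x_1(s)\in Y_1^{\overline F}(s)$ extracted from the blocking hypothesis (C'2) is compatible with those perturbations.
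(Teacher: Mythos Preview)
Your approach is correct and parallels the paper's, but with one substantive execution difference worth noting. The paper first \emph{modifies} $x_1$ on $S_{\mathscr{F}_3}$ (harmless since $\widetilde p$ is action-independent there), setting $x_1(s)=z_1(s)\in Y_1^D(s)$ on each $D\in\mathscr{F}_2$ and $x_1(s)\in Y_1(s)$ on $S_{\mathscr{F}_1}$. With this modification it proves that $F$ is already closed under $\widehat p$ (via Lemma~\ref{lemma:13}(\ref{p3l13})), and then builds $\overline F$ as a maximal strongly connected component \emph{under $\widehat p$}. Consequently (P'2) for $\overline F$ is almost immediate: $\overline F$ is $Y^{x_1}$-communicating under $\widehat p$ by construction, and once one checks $x_1(s)\in Y_1^{\overline F}(s)$ in each of the three cases, every $Y^{x_1}$-perturbation is a $Y^{\overline F}$-perturbation. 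The paper also rules out $\overline F\subseteq D$ for $D\in\mathscr{F}_2$ early (its property~(c)), using Lemma~\ref{lemz1}(\ref{p3lemz1}) and the $\widehat p$-closure, which makes the final contradiction one line.

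Your route---building $\overline F$ under $\widetilde p$ without modifying $x_1$---trades that upfront normalization for a heavier (P'2) step: you must transport communication from $\widetilde p$ to $\widehat p$ across each $\mathscr{F}_2$-cell. Your plan for this (navigate inside $D$ via (P'2) for $D$, exit through the player-1 action of \Eqref{choiceexitD}, use $q_D^{\mathscr{F}_2}(\overline F)=1$) is sound and amounts to reusing Lemma~\ref{lemz1}(\ref{p3lemz1}); the key observation that makes the simulation go through is that the conditional exit distribution from $D$ is $q_D^{\mathscr{F}_2}\!\restriction_{S\setminus D}$ under both $\widetilde p$ and your $\widehat p$-perturbation, so any $\widetilde p$-path in $\overline F$ can be replayed under $\widehat p$ with positive probability (whence probability~1 by finite-state recurrence). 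Your final contradiction, via $\overline F=D'$ forcing $q_{D'}^{\mathscr{F}_2}(D')=1$ against $q_{D'}^{\mathscr{F}_2}(S\setminus D')>0$, is equivalent to the paper's use of property~(c); it just invokes the structural fact directly rather than through Lemma~\ref{lemz1}. In short: same skeleton, but the paper's normalization of $x_1$ buys a cleaner (P'2), while your version keeps $x_1$ untouched at the cost of the transfer argument you correctly flagged as the main obstacle.
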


\begin{proof}
Assume to the contrary that there exist $F$ and $x_1$ with the properties above. Since $\widetilde p$ is independent of the players' actions on $S_{\mathscr{F}_3}$, we can assume without loss of generality that
\begin{itemize}
\item[(i)] $x_1(s)\in Y_1(s)$ for each $s\in S_{\mathscr{F}_1}$,
\item[(ii)] $x_1(s)=z_1(s)$ for each $s\in S_{\mathscr{F}_2}
$; see Notation \ref{notz1} for the definition of $z_1$.
\end{itemize}
These two conditions do not contradict, because $z_1(s)\in Y_1(s)$ whenever $s\in S_{\mathscr{F}_1}\cap S_{\mathscr{F}_2}$ by Lemma \ref{lemz1}(\ref{p2lemz1}). 
Note that $x_1(s)\in Y_1^D(s)$ for each $D\in\mathscr{F}_2$ and each $s\in D$ by assumption (ii) above and by Lemma \ref{lemz1}(\ref{p1lemz1}).\black\medskip

\noindent\textbf{Step 1:} For each $D\in\mathscr{F}_2$ such that $D\cap F\neq\emptyset$, we have $D\subseteq F$ and $q_D^{\mathscr{F}_2}(F)=1$, i.e., $q_D^{\mathscr{F}_2}$ induces transitions only to states in $F$.

Let $D\in\mathscr{F}_2$ such that $D\cap F\neq\emptyset$. Let $s\in D\cap F$. By (C'1) and 
since $\widetilde p$ in state $s$ assigns probability $\frac{1}{2}$ to $U_D$ and probability $\frac{1}{2}$ to $q_D^{\mathscr{F}_2}$, we have $D\subseteq F$ and  $q_D^{\mathscr{F}_2}(F)=1$.\medskip
\black 

\noindent\textbf{Step 2:} For every $s \in F$ and every $y(s) \in Y^{x_1}(s)$ we have
\begin{equation}
\label{toprovestep1}
\widehat p(F \mid s,y(s)) = 1.
\end{equation} 
Let $s \in F$ and $y(s) \in Y^{x_1}(s)$, so that $y_1(s)=x_1(s)$. We distinguish between two cases.\medskip

\noindent Case 1: $s\notin S_{\mathscr{F}_2}$. Then $\widetilde p$ and $\widehat p$ coincide in state $s$, and hence (C'1) implies \Eqref{toprovestep1}.\smallskip

\noindent Case 2: $s\in S_{\mathscr{F}_2}$. In this case $s\in D$ for some $D\in \mathscr{F}_2$.
By Step 1, $D\subseteq F$. Since $x_1(s)\in Y_1^D(s)$ by assumption,  Lemma~\ref{lemma:13}(\ref{p3l13}) implies that $\widehat p(D \mid s,y(s)) = 1$, and \Eqref{toprovestep1} follows.\black\medskip

\noindent\textbf{Step 3:}  There exists a nonempty subset $\overline F \subseteq F$ such that
\begin{enumerate}
    \item[(a)] $\overline F$ is $Y^{x_1}$-communicating under $\widehat p$.
    \item[(b)] $\widehat p(\overline F\mid s,y(s))=1$ for every $s \in \overline F$ and every perturbation $y(s)$ of $Y^{x_1}(s)$ such that $\widehat p(F\mid s,y(s))=1$.
    \item[(c)] There is no $D\in\mathscr{F}_2$ such that $\overline F\subseteq D$.
\end{enumerate}

For $s,s'\in F$, write $s\to s'$ if there is a perturbation $y(s)$ of $Y^{x_1}(s)$ such that $\widehat p(F\mid s,y(s))=1$ and $\widehat p(s' \mid s,y(s)) >0$. 
Step 2 implies that in the directed graph with $F$ as the set of vertices and arcs given by $\to$, there is a maximal strongly connected component $\overline F$. It follows that $\overline F$ satisfies (a) and (b); cf.~also Lemma 41 in Vieille \cite{vieille2000one}.

We next show that $\overline F$ also satisfies (c). Assume by way of contradiction that $\overline F\subseteq D$ for some $D\in\mathscr{F}_2$. 
Let $(w_1,w_2)$ be as in Lemma \ref{lemz1}(\ref{p3lemz1}) for this set $D$. 
Then, by Lemma~\ref{lemz1}(\ref{p3lemz1}.a)) and Step 1 above, $\widehat p(F\mid s,w_1(s),w_2(s))=1$ for each $s\in D$. Hence, by condition (b) of Step 3 we have $\widehat p(\overline F\mid s,w_1(s),w_2(s))=1$ for each $s\in D$. 
Since by assumption $\overline F\subseteq D$, this implies $\widehat p(D\mid s,w_1(s),w_2(s))=1$ for each $s\in D$. This however contradicts of Lemma~\ref{lemz1}(\ref{p3lemz1}.b).\black\medskip

\noindent\textbf{Step 4:} $\overline F\in\overline{\mathscr{F}}_2$. 

We verify that $\overline F$ satisfy the conditions of Definition \ref{def-F2}. 
Property (P'1) for $\overline F$ holds since $\overline F\subseteq F$ and by (C'3). 

We next verify that Property (P'2) for $\overline F$ holds as well. In view of condition (a) of Step~3, we only need to check that $x_1(s)\in Y_1^{\overline F}(s)$ for each $s\in \overline{F}$ (for the definition of $Y_1^{\overline F}(s)$, see Notation \ref{def:Y_1D}). 
Fix $s\in\overline F$. 
We distinguish between three cases, which are not necessarily mutually exclusive.\smallskip

\noindent Case 1: $s\in S_{\mathscr{F}_1}$, i.e., $s\in C$ for some $C\in\mathscr{F}_1$. Then, $Y_1^{\overline F}(s)=Y_1(s)$ by the definition of $Y_1^{\overline F}(s)$, and by assumption, $x_1(s)\in Y_1(s)$. Hence, $x_1(s)\in Y_1^{\overline F}(s)$, as desired.\smallskip

\noindent Case 2: $s\in S_{\mathscr{F}_2}$, i.e., $s \in D$ for some $D\in\mathscr{F}_2$. Then, $x_1(s)\in Y_1^D(s)$ by assumption.\smallskip

\noindent Case 3: $s\notin S_{\mathscr{F}_3}$. Let $a_2\in A_2$ such that $p(\overline F\mid s,x_1(s),a_2)<1$. We need to show that 
\begin{equation}
\label{toshowp2}
\E[v_2 \mid s,x_1(s),a_2]\,<\,v_2(F).
\end{equation}
Note that $\widetilde p$, $\widehat p$, and $p$ all coincide in state $s$. 

We argue that $p(F\mid s,x_1(s),a_2)<1$. Indeed, let $y_2(s)\in Y_2(s)$, and consider the perturbation $y'(s)=(x_1(s),\frac{1}{2}y_2(s)+\frac{1}{2}a_2)$ of $Y^{x_1}(s)$. 
By the choice of $a_2$ we have $p(\overline F\mid s,y'(s))<1$. Hence, condition (b) of Step~3 implies that $p(F\mid s,y'(s))<1$. 
By condition (a) of Step~3, $p(F\mid s,x_1(s),y_2(s))=1$, and this yields $p(F\mid s,x_1(s),a_2)<1$, as desired.

Condition (a) of Step~3 implies that $(s,x_1(s),a_2)\in \mathscr{E}_{2,\widetilde p}(F,Y^{x_1})$. Thus, by (C'2) we have $\E[v_2\mid s,x_1(s),a_2]\,=\,\E_{\widetilde p}[v_2\mid s,x_1(s),a_2]\,<\,v_2(F)$, which proves \Eqref{toshowp2}.\medskip

\noindent\textbf{Step 5:} The contradiction.

By Step 4 and the definition of $\mathscr{F}_2$, there is a set $D\in\mathscr{F}_2$ such that $\overline F\subseteq D$. This is however in contradiction with property (c) of Step 3. 
\end{proof}

\subsection{An Auxiliary Stochastic Game}\label{sec-auxgame}

In this section we will define an auxiliary stochastic game,
and compare the players' maxmin values in this game to their maxmin values in the original game.

\begin{notation}[the auxiliary stochastic game $\Gamma^R$]\rm 
Let $\Gamma^R$ be the following two-player stochastic game:
\begin{enumerate}
\item The set of states is $S$.
\item In each state $s \in S_{\mathscr{F}_3}$, each player has a single action,
so these states are dummy states in $\Gamma^R$.
\item In each state $s \not\in S_{\mathscr{F}_3}$, the sets of actions of the two players are $A_1$ and $A_2$.
\item The transition function is $\widetilde p$. 
Note that here we made a slight abuse of notation: 
in states in $S_{\mathscr{F}_3}$ the players have a single action,
yet in those states the transition is independent of the action pair.
\item The payoff function of each player $i$, denoted by $f_i^R$, is defined as follows: Let $r\in\mathscr{R}$.
\begin{itemize}
    \item If the run $r$ never reaches an absorbing state, then $f_i^R(r)=0$.
    \item If the run $r$ reaches an absorbing state $s\in S^*$, then $f_i^R(r)=(\gamma_1^s,\gamma^s_2)$; recall the definition of  $(\gamma_1^s,\gamma^s_2)$ in Assumption \ref{As2}.
\end{itemize}
\end{enumerate}
\end{notation}

\begin{comment}
In a stochastic game as defined in Definition~\ref{def:stochastic:game},
the sets of actions of the players are state independent.
In $\Gamma^R$, some states may be dummy states,
where the action set of both players is a singleton.
Since the transition out of these states is predetermined,
one can remove these states from the game by incorporating the transitions out of them in the transition that leads to them.
Therefore, to prove the existence of $\ep$-equilibria in $\Gamma^R$, 
we can assume that the actions are state-independent.
\end{comment}

The game $\Gamma^R$ is a \emph{recursive} game: 
the payoff is 0 if no absorption occurs, and otherwise it is fully determined by the state where the run is absorbed; cf.~Everett \cite{Everett1957}. A similar game is defined on page 85 in Vieille \cite{vieille2000one}. 

\begin{notation}[the probability measure $\prob^R$, the expectation $\E^R$]\rm
Every strategy pair $(\sigma_1,\sigma_2)$ in $\Gamma^R$, 
together with the initial state $s \in S$, 
induce a probability measure $\prob^R_{s,\sigma_1,\sigma_2}$ on the set $\mathscr{R}$ of runs in $\Gamma^R$.
Denote by $\E^R_{s,\sigma_1,\sigma_2}$ the corresponding expectation operator.
\end{notation}

\begin{notation}[the maxmin values $v_i^R$]\rm 
For each $s \in S$ and each $i \in \{1,2\}$,
denote by $v_i^R(s)$ player $i$'s maxmin value at the initial state $s$ in $\Gamma^R$.
\end{notation}
\black

\begin{definition}[absorbing stationary strategy pair]
A stationary strategy pair $(x_1,x_2)$ is called \emph{absorbing} in a stochastic game if under $(x_1,x_2)$, regardless of the initial state, the run eventually reaches an absorbing state with probability 1.
\end{definition}

The next lemma shows that in $\Gamma^R$,
for every stationary strategy of player~1, there is a stationary strategy of player~2 such that the arising stationary strategy pair is absorbing, and player 2's expected absorbing payoff in $\Gamma^R$ is at least her maxmin value in $\Gamma$. 
This is the analog of Lemma 46 in Vieille \cite{vieille2000one}. 
One consequence of this result is that the set $S^*$ of absorbing states is nonempty.

\begin{lemma}
\label{lemma:46}
The following property holds for the game $\Gamma^R$:
For every stationary strategy $x_1$ of player 1,
there exists a stationary strategy $x_2$ of player 2 such that:
\begin{enumerate}
    \item $(x_1,x_2)$ is absorbing in $\Gamma^R$.
    \item $\E^R_{s,x_1,x_2}[f_2^R] \geq v_2(s)$ for each $s \in S_0$.
\end{enumerate}
\end{lemma}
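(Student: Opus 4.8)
The plan is to fix the stationary strategy $x_1$ and to let player~2 restrict herself, at each state $s$, to the set
\[
\mathcal{A}_2(s)\,:=\,\{x_2(s)\in\Delta(A_2)\colon \E_{\widetilde p}[v_2\mid s,x_1(s),x_2(s)]\geq v_2(s)\}
\]
of mixed actions that do not decrease her maxmin value in expectation under $\widetilde p$, where $v_2$ is extended to $S^*$ by $v_2(s):=\gamma_2^s$ (which is the maxmin value there, by Assumption~\ref{As2}). This set is nonempty, convex and compact: at states outside $S_{\mathscr{F}_3}$, where $\widetilde p=p$, it contains $Y_2(s)$ by \eqref{equ:81}, and at states of $S_{\mathscr{F}_3}$ the transition $\widetilde p$ is action-independent with $\E_{\widetilde p}[v_2\mid s,\cdot]\geq v_2(s)$ by \eqref{exp-q}, \eqref{exp-q2} and the constancy of $v_2$ on the sets of $\mathscr{F}_1$ and $\mathscr{F}_2$, so that $\mathcal{A}_2(s)=\Delta(A_2)$ there. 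Consequently, for any stationary $x_2$ with $x_2(s)\in\mathcal{A}_2(s)$ the process $(v_2(s^t))_t$ is a bounded submartingale under $(x_1,x_2)$ and $\widetilde p$.

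Given this, part~2 follows from part~1. Indeed, if $(x_1,x_2)$ is absorbing then the absorption time $\tau$ is almost surely finite, and the optional stopping theorem for the bounded submartingale $v_2(s^t)$ gives $\E^R_{s,x_1,x_2}[f_2^R]=\E^R_{s,x_1,x_2}[v_2(s^\tau)]\geq v_2(s)$, using $f_2^R=\gamma_2^{s^\tau}=v_2(s^\tau)$ at absorption. Thus the entire content of the lemma is to exhibit a single stationary $x_2$ with $x_2(s)\in\mathcal{A}_2(s)$ such that player~2 reaches $S^*$ with probability~$1$ from every initial state.

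I would prove this reachability statement by contradiction, using Lemma~\ref{lemma:45}. View player~2 as the controller of the Markov decision problem with transitions $\widetilde p$, player~1 frozen at $x_1$, action sets $\mathcal{A}_2(s)$, and target $S^*$; let $W$ be the set of states from which player~2 can reach $S^*$ almost surely. By a standard attractor argument $W$ is realized by a single stationary strategy in $\prod_s\mathcal{A}_2(s)$, so it suffices to show $W=S$. Suppose not, and set $L:=S\setminus W\neq\emptyset$; since $S^*\subseteq W$, we have $L\subseteq S_0$. Let $F_0\subseteq L$ be the set of states from which player~2 cannot reach $W$ even with positive probability. Then $F_0$ is nonempty (otherwise every state of $L$ reaches $W$ with positive and hence, by retrying, with full probability, contradicting $L\cap W=\emptyset$) and closed under $\mathcal{A}_2$ (a leak to $W$, or to a state that reaches $W$, would move the state out of $F_0$). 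Finally put $c:=\max_{s\in F_0}v_2(s)$ and $F:=\{s\in F_0\colon v_2(s)=c\}$, and check that $F$ and $x_1$ violate Lemma~\ref{lemma:45}. First, $F$ is closed under $\mathcal{A}_2$: for $s\in F$ and $x_2(s)\in\mathcal{A}_2(s)$ the support lies in $F_0$ with values $\leq c$, while $\E_{\widetilde p}[v_2\mid s,x_1(s),x_2(s)]\geq v_2(s)=c$, forcing it into $F$; as $Y_2(s)\subseteq\mathcal{A}_2(s)$, this yields (C'1). Property~(C'3) holds by construction. For~(C'2), any $Y^{x_1}$-unilateral exit of player~2 from $F$ under $\widetilde p$ is a pure action $a_2$ with $\widetilde p(F\mid s,a_2,x_1(s))<1$; since $F$ is closed under $\mathcal{A}_2$, such an $a_2$ cannot lie in $\mathcal{A}_2(s)$, i.e.\ $\E_{\widetilde p}[v_2\mid s,x_1(s),a_2]<v_2(s)=c=\max_{s'\in F}v_2(s')$, which is exactly~(C'2). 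Thus $F$ satisfies (C'1)--(C'3), contradicting Lemma~\ref{lemma:45}; hence $W=S$ and the required absorbing $x_2$ exists.

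The crux, and the main obstacle, is the choice of the action restriction $\mathcal{A}_2$. It has to be wide enough that player~2 can still escape every non-winning region, while every escape action remains value-non-decreasing so that $v_2$ stays a submartingale and part~2 is secured; and it is precisely the complementary fact — that value-non-decreasing actions cannot break out of a top-value closed trap — that turns such a trap into a set which is simultaneously closed, value-constant and blocked, making the impossibility Lemma~\ref{lemma:45} applicable. The delicate point here is obtaining a genuinely \emph{closed} candidate trap rather than a merely inescapable one, which is why the passage first to $F_0$ (states that cannot reach $W$ at all) and then to its top value level $F$ is needed.
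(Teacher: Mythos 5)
Your proposal is correct and follows essentially the same route as the paper: restrict player~2 to actions that do not decrease $v_2$ in expectation under $\widetilde p$ (the paper uses the uniform distribution over the set $B_2(s)$ of pure such actions), obtain part~2 from part~1 via the bounded submartingale $(v_2(s^t))_t$ and optional stopping, and prove absorption by showing that any trap for value-non-decreasing actions would, at its top $v_2$-level, yield a nonempty set satisfying (C'1)--(C'3), contradicting Lemma~\ref{lemma:45}. The remaining differences are cosmetic: the paper works with an invariant set of the Markov chain induced by its explicit strategy and uses Lemma~\ref{lemma:45} to produce a good exit that contradicts invariance, whereas you verify (C'2) directly for the top level of the unreachability region and then rely on standard MDP attractor/path-following facts to extract a single stationary absorbing strategy.
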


\begin{proof}
Fix a stationary strategy $x_1$ of player~1.

\smallskip
\noindent\textbf{Step 1:} The definition of the stationary strategy $x_2^*$ for player 2.

For every state $s \in S_0$ define
\[ B_2(s) := \left\{ a_2 \in A_2\, \colon\, \E_{\widetilde p}[v_2 \mid s,x_1(s),a_2] \geq v_2(s)\right\}.\]

We argue that $B_2(s) \neq\emptyset$ for each $s \in S$. Indeed, suppose first that $s\in\ S_{\mathscr{F}_3}$, i.e., $s \in E$ for some $E \in \mathscr{F}_3$. Then $E\in\mathscr{F}_\ell$ for some $\ell\in\{1,2\}$, and for every $a_2\in A_2$ we have
\begin{equation}
\label{atleastv2ptilde}
\E_{\widetilde p}[v_2 \mid s,x_1(s),a_2] \,=\, \frac{1}{2}\cdot\E[v_2\mid q_E^{\mathscr{F}_\ell}]+\frac{1}{2}\cdot v_2(E)\, \geq\, v_2(E)\,=\,v_2(s), 
\end{equation}
where the inequality is by Eqs.~\eqref{exp-q} and \eqref{exp-q2}. 
Since at each state $s \in S_{\mathscr{F}_3}$, 
the transition is independent of the players' actions,
in this case, $B_2(s)=A_2$. 

Suppose next that $s\notin S_{\mathscr{F}_3}$. Then $\widetilde p$ coincides with $p$ in state $s$, and hence $B_2(s) \neq\emptyset$ by Eq.~\eqref{equ:minmax}.

Let $x^*_2(s)$ be the uniform distribution over $B_2(s)$,
for every $s \in S_0$. 
This implies in particular that the process $(v_2(s^t))_{t \in \dN}$ of player~2's maxmin values 
is a submartingale under the stationary strategy pair $(x_1,x^*_2)$ and $\widetilde p$, i.e., under $\prob^R_{s,x_1,x_2^*}$. 
We will prove that $x_2^*$ satisfies properties 1 and 2 of Lemma \ref{lemma:46}.\medskip

\smallskip
\noindent\textbf{Step 2:} $(x_1,x_2^*)$ is absorbing in $\Gamma^R$. 

If $(x_1,x_2^*)$ is not absorbing in $\Gamma^R$,
then there is a nonempty set $F\subseteq S_0$ such that $\widetilde p(F\mid s,x_1(s),x_2^*(s))=1$ for each state $s\in F$. 

Let $w_2^F=\max_{s\in F}v_2(s)$, and define
\[F^{max}\,=\,\Big\{s\in F\colon v_2(s)=w_2^F\Big\}.\]
We will use the following facts: (i) $\widetilde p(F^{max}\mid s,x_1(s),x_2^*(s))=1$ for each $s\in F^{max}$, by the definition of $x_2^*$; (ii) $\E_{\widetilde p}[v_2\mid s,x_1(s),a_2]=w_2^F$ for each $s\in F^{max}$ and $a_2\in B_2(s)$; and (iii) $\E_{\widetilde p}[v_2\mid s,x_1(s),a_2]<w_2^F$ and $\widetilde p(F^{max}\mid s,x_1(s),a_2)< 1$ for each  $s\in F^{max}$ and each $a_2\in A_2\setminus B_2(s)$.

We focus on the conditions of Lemma~\ref{lemma:45}. 
Note that $F^{max}$ satisfies (C'3) by definition. 

We now argue that $F^{max}$ satisfies (C'1). To this end, we fix $s\in F^{max}$ and $y_2(s)\in Y_2(s)$, and show that 
\begin{equation}
\label{forC'1}
\widetilde p(F^{max}\mid s,x_1(s),y_2(s))=1.
\end{equation}
If $s\in S_{\mathscr{F}_3}$, then $B_2(s)=A_2$, and hence (i) implies \Eqref{forC'1}. So, assume that $s\notin S_{\mathscr{F}_3}$. Then $\widetilde p$ coincides with $p$ in state $s$, and \Eqref{equ:81} yields $\E_{\widetilde p}[v_2\mid s,x_1(s),y_2(s)]\geq v_2(s) = w_2^F$. Hence, by (ii) and (iii) we have $\supp(y_2(s))\subseteq B_2(s)$, and then by (i) we obtain \Eqref{forC'1}.

In view of Lemma~\ref{lemma:45}, $F^{max}$ cannot satisfy (C'2);
that is, $F^{max}$ is not $Y^{x_1}$-blocked to player~2 under $\widetilde p$.
Hence, there are $s\in F^{max}$ and $a_2\in A_2$ such that $\widetilde p(F^{max}\mid s,x_1(s),a_2)<1$ and \[\E_{\widetilde p}[v_2\mid s,x_1(s),a_2]\,\geq\, w_2^F.\] 
This implies that $\widetilde p(F\mid s,x_1(s),a_2)<1$ and $a_2\in B_2(s)$. This, however, contradicts the assumption that $\widetilde p(F\mid s,x_1(s),x_2^*(s))=1$.\medskip

\smallskip
\noindent\textbf{Step 3:}
$\E^R_{s,x_1,x_2^*}[f^R_2] \geq v_2(s)$ for each $s \in S_0$.

Let $\theta$ denote the stage of absorption; if no absorption occurs then $\theta=\infty$. For every $s\in S_0$, we have
\[\E^R_{s,x_1,x^*_2}[f^R_2] \,=\, \E^R_{s,x_1,x^*_2}[v_2(s^{\theta})] \,\geq\, v_2(s); \]
here the equality holds because by Step~2 $(x_1,x^*_2)$ is absorbing in $\Gamma^R$, and the inequality holds because, as mentioned above, the process $(v_2(s^t))_{t \in \dN}$ of player~2's maxmin values 
is a submartingale under $\prob^R_{s,x_1,x_2^*}$. 
\end{proof}

\bigskip

The next lemma compares each player's maxmin values in the games $\Gamma$ and $\Gamma^R$. This is the analog of Lemma 47 in Vieille \cite{vieille2000one}.

\begin{lemma}
\label{lem47}
$v^R_i(s) \geq v_i(s)$ for each player $i\in\{1,2\}$ and each state $s \in  S_0$.
\end{lemma}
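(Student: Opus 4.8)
The plan is to show that each player can secure her original maxmin value $v_i(s)$ in $\Gamma^R$, by exhibiting a strategy under which the maxmin values (suitably corrected on the dummy region) form a submartingale under $\widetilde p$. Two facts drive the argument. First, by \eqref{equ:81}, at any $s\in S_0$ a mixed action $y_i(s)\in Y_i(s)$ guarantees $\E[v_i\mid s,a_j,y_i(s)]\ge v_i(s)$ for every opponent action $a_j$, so playing inside $Y_i$ keeps $v_i$ a submartingale at states where $\widetilde p=p$, i.e. off $S_{\mathscr{F}_3}$. Second, on the dummy region $\widetilde p$ is action-independent and, by \eqref{exp-q} and \eqref{exp-q2}, $\E[v_k\mid q_E^{\mathscr{F}_\ell}]\ge\max_{s'\in E}v_k(s')$ for both players; moreover $v_2$ is constant on every set in $\mathscr{F}_1\cup\mathscr{F}_2$, whereas $v_1$ is constant on $\mathscr{F}_1$-sets but need not be on $\mathscr{F}_2$-sets. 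Finally, Assumption~\ref{As1} produces a crucial asymmetry: a non-absorbing run yields $f_i^R=0$, which lies above every achievable payoff of player~1 ($f_1\le -1$) and below every achievable payoff of player~2 ($f_2\ge 1$). Hence player~1 can tolerate non-absorption, while player~2 must force absorption.

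For player~1 I would let her play the stationary strategy $y_1$ with $y_1(s)\in Y_1(s)$ for all $s$, and track the potential $\phi_1$ defined by $\phi_1(s)=v_1(s)$ for $s\notin S_{\mathscr{F}_3}$ and $\phi_1(s)=\E[v_1\mid q_E^{\mathscr{F}_\ell}]$ (a constant) for $s\in E\in\mathscr{F}_\ell$. The correction on dummy sets is exactly what is needed, since the uniform reshuffling $U_D$ inside an $\mathscr{F}_2$-set would otherwise destroy monotonicity of $v_1$. One checks $v_1\le\phi_1\le -1$ everywhere and that $\phi_1(s^t)$ is a submartingale under $\prob^R_{s,y_1,\sigma_2}$ for every $\sigma_2$, using \eqref{equ:81} off the dummy region and \eqref{exp-q}--\eqref{exp-q2} together with the constancy of $\phi_1$ on each dummy set. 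By bounded (sub)martingale convergence, $\E^R[V_\infty]\ge\phi_1(s)\ge v_1(s)$, where $V_\infty=\lim_t\phi_1(s^t)$. Since $f_1^R=v_1(s^\theta)=V_\infty$ on absorption (at time $\theta$) and $f_1^R=0\ge V_\infty$ on non-absorption, we get $f_1^R\ge V_\infty$ pointwise, hence $\E^R_{s,y_1,\sigma_2}[f_1^R]\ge v_1(s)$; taking the infimum over $\sigma_2$ gives $v_1^R(s)\ge v_1(s)$.

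For player~2 the potential argument alone is insufficient: because non-absorption is worst for her, the bound $v_2^R(s)\ge v_2(s)$ requires absorption with probability one, which is precisely what Lemma~\ref{lemma:46} supplies — but only against stationary strategies of player~1. My plan is therefore to pass through the minmax representation. As $f_2^R$ is bounded and Borel, the zero-sum game in which player~2 maximizes $f_2^R$ is determined, so $v_2^R(s)=\inf_{\sigma_1}\sup_{\sigma_2}\E^R_{s,\sigma_1,\sigma_2}[f_2^R]$. Because $\Gamma^R$ is a recursive game in the sense of Everett \cite{Everett1957}, the minimizing player~1 has, for each $\ep>0$, a stationary $\ep$-optimal strategy $x_1^\ep$, i.e. $\sup_{\sigma_2}\E^R_{s,x_1^\ep,\sigma_2}[f_2^R]\le v_2^R(s)+\ep$. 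Applying Lemma~\ref{lemma:46} to the stationary $x_1^\ep$ yields a stationary $x_2$ with $\E^R_{s,x_1^\ep,x_2}[f_2^R]\ge v_2(s)$, so $\sup_{\sigma_2}\E^R_{s,x_1^\ep,\sigma_2}[f_2^R]\ge v_2(s)$. Combining these, $v_2^R(s)+\ep\ge v_2(s)$ for all $\ep>0$, and letting $\ep\to0$ finishes.

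The step I expect to be the main obstacle is exactly this reduction of player~1 (the minimizer for $f_2^R$) to stationary strategies, which makes Lemma~\ref{lemma:46} applicable. If one preferred to avoid invoking recursive-game theory and instead react directly to an arbitrary $\sigma_1$ — playing, at each history $h$, uniformly over $B_2(h)=\{a_2\in A_2:\E_{\widetilde p}[v_2\mid s_h,\sigma_1(h),a_2]\ge v_2(s_h)\}$, which is nonempty by \eqref{equ:minmax}, \eqref{exp-q}, and \eqref{exp-q2} — then $v_2(s^t)$ remains a submartingale, but one must re-prove absorption for history-dependent play. That amounts to showing that a set of states on which the non-absorbing play is trapped with positive probability, and on which player~2 cannot raise her maxmin value, would contradict Lemma~\ref{lemma:45}; carrying this out without the stationarity of $\sigma_1$ is the delicate point. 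By contrast, the player~1 bound and all the submartingale bookkeeping are routine.
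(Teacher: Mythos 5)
Your proof is correct and follows the same two-pronged route as the paper. For player~2 it is essentially identical to the printed proof: determinacy of the auxiliary zero-sum game, a stationary $\ep$-optimal strategy $x_1^\ep$ for the minimizer in the recursive game $\Gamma^R$ (Everett/Thuijsman--Vrieze), Lemma~\ref{lemma:46} applied to $x_1^\ep$ to produce a stationary reply $x_2$ with $\E^R_{s,x_1^\ep,x_2}[f_2^R]\geq v_2(s)$, and letting $\ep\to 0$. For player~1 you follow the same submartingale-plus-Assumption~\ref{As1} scheme, but with one genuine refinement: the paper runs the argument with the process $v_1(s^t)$ itself, asserting it is a submartingale under $\prob^R_{s,y_1,x_2}$ ``by Eqs.~\eqref{exp-q} and \eqref{exp-q2}'', whereas you run it with the corrected potential $\phi_1$, constant on each dummy set $E\in\mathscr{F}_3$ and equal to $\E[v_1\mid q_E^{\mathscr{F}_\ell}]$ there. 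This correction is not cosmetic. On a set $D\in\mathscr{F}_2$ only $v_2$ is guaranteed to be constant (property (P'1)); at a state $s'\in D$ with $v_1(s')=\max_{s''\in D}v_1(s'')$ the transition $\widetilde p=\tfrac{1}{2}q_D^{\mathscr{F}_2}+\tfrac{1}{2}U_D$ gives, for any action pair,
\begin{equation*}
\E_{\widetilde p}[v_1\mid s',\cdot\,]\,=\,\tfrac{1}{2}\,\E[v_1\mid q_D^{\mathscr{F}_2}]+\tfrac{1}{2}\cdot\tfrac{1}{|D|}\sum_{s''\in D}v_1(s''),
\end{equation*}
and \Eqref{exp-q2} alone (which only yields $\E[v_1\mid q_D^{\mathscr{F}_2}]\geq\max_{s''\in D}v_1(s'')$) does not force this to be at least $v_1(s')$ when $v_1$ is non-constant on $D$; your $\phi_1$, being constant on $D$ and dominating $v_1$ pointwise, restores the submartingale inequality exactly where the paper's one-line justification is loose. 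A second, smaller difference: your submartingale bound holds against every strategy $\sigma_2$, not only stationary ones, so you obtain $\inf_{\sigma_2}\E^R_{s,y_1,\sigma_2}[f_1^R]\geq v_1(s)$ directly and do not need the paper's additional appeal to the existence of a stationary best response for player~2 in the recursive game. The remaining bookkeeping (on absorption $f_1^R=\gamma_1^{s^\theta}=v_1(s^\theta)=V_\infty$ via Assumption~\ref{As2}, on non-absorption $f_1^R=0\geq V_\infty$ via $f_1\leq -1$) matches the paper.
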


\begin{proof}
Fix $s\in S_0$.

We start by proving the lemma for player 2. 
Let $\ep>0$. In the game $\Gamma^R$, by Everett \cite{Everett1957} or by Thuijsman and Vrieze \cite{thuijsman1992note}, player 1 has a stationary strategy $x_1$ such that $\E^R_{s,x_1,y_2}[f^R_2]\leq v_2^R(s)+\ep$ for every stationary strategy $y_2$ of player 2. Let $x_2$ be as in Lemma \ref{lemma:46} for $x_1$. 
By property 2 of Lemma \ref{lemma:46}, we have $v_2(s)\leq \E^R_{s,x_1, x_2}[f^R_2]\leq v_2^R(s)+\ep$. As $\ep>0$ is arbitrary, we have $v_2(s)\leq v_2^R(s)$ as desired.

We next prove the claim for player 1.
Our proof will use the assumption that player~1's absorbing payoffs are negative, see Assumption~\ref{As1}.
Let $y_1\in Y_1$ be a stationary strategy for player 1. We first argue that for every stationary strategy $x_2$ for player 2,
\begin{equation}
\label{arby}
\E^R_{s,y_1,x_2}[f^R_1]\,\geq\, v_1(s).
\end{equation} 
Fix then a stationary strategy $x_2$ for player 2. 
Let $\theta$ denote the stage of absorption; if no absorption occurs then $\theta=\infty$. By Lemma \ref{lemma:expected:minmax}, the process $(v_1(s^t))_{t\in\dN}$ of player 1's maxmin values is a submartingale under $\prob_{s,y_1,x_2}$, and hence by Eqs.~\eqref{exp-q} and \eqref{exp-q2}, $(v_1(s^t))_{t\in\dN}$ is also a submartingale under $\prob^R_{s,y_1,x_2}$. Let $v_1^\infty$ denote its limit. Note that $v_1^\infty\leq 0$ by Assumption \ref{As1}, and also that $v_1^\infty=v_1(s^\theta)$ if $\theta<\infty$. Hence,
\begin{align*}
\E^R_{s,y_1,x_2}[f^R_1]\,&=\,\prob^R_{s,y_1,x_2}(\theta < \infty) \cdot \E^R_{s,y_1,x_2}[v_1(s^\theta) \mid \theta < \infty]\\
&=\,\E^R_{s,y_1,x_2}[\mathbf{1}_{\{\theta < \infty\}} v_1(s^\theta)]\\
&\geq\,\E^R_{s,y_1,x_2}[v_1^\infty]\\
&\geq\,v_1(s),
\end{align*}
which proves \Eqref{arby}.

As $\Gamma^R$ is a recursive game, player 2 has a stationary best response in $\Gamma^R$ against the stationary strategy $y_1$; cf.~for example, Everett \cite{Everett1957} or Thuijsman and Vrieze \cite{thuijsman1992note}. 
Hence, \Eqref{arby} implies that $v_1^R(s)\geq v_1(s)$, as desired. 
\end{proof}

\subsection{Completing the Proof of Theorem~\ref{theorem:1}}\label{sec-finalstep}

In this section, we complete the proof of Theorem~\ref{theorem:1}. We show that, for any initial state in $S_0$, the game $\Gamma$ admits an $\ep$-equilibrium for all $\ep>0$. Since this is in contradiction with Assumption \ref{As3}, the proof of Theorem~\ref{theorem:1} will be complete.
Our arguments are similar to those in Vieille~\cite{vieille2000one},
hence we only sketch the construction.

The recursive game $\Gamma^R$, defined in Section \ref{sec-auxgame}, has the following properties:
\begin{itemize}
\item $\Gamma^R$ is positive in the sense of Vieille \cite{vieille2000one,vieille2000two}: $v_2^R(s)\geq v_2(s)\geq 1>0$ for each absorbing state $s\in S^*$,
where the inequalities hold by Assumption \ref{As1} and Lemma \ref{lem47}.
\item $\Gamma^R$ has the absorbing property in the sense of Vieille \cite{vieille2000one,vieille2000two}: player 2 has a stationary strategy $x_2$ such that for each stationary strategy $x_1$ of player~1, the pair $(x_1,x_2)$ is absorbing in $\Gamma^R$. Indeed, by Lemma~\ref{lemma:46}(1), any stationary strategy for player 2 that uses each action with positive probability will do.
\end{itemize}
In view of these properties, player 2 wants to reach absorption in $\Gamma^R$, and she can also force it. The following statement follows from the main result in Vieille \cite{vieille2000two}.

\begin{lemma}
The recursive game $\Gamma^R$ admits an $\ep$-equilibrium for every $\ep>0$.
\end{lemma}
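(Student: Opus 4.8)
The plan is to verify that $\Gamma^R$ falls within the scope of Vieille's result (V.1), recalled in the introduction, and then simply invoke it. The first step is to recast the recursive game $\Gamma^R$ as a stochastic game with the long-term average payoff. I would assign to each nonabsorbing state the stage payoff $(0,0)$ for both players, and to each absorbing state $s\in S^*$ the constant stage payoff $(\gamma_1^s,\gamma_2^s)$. Since every absorbing state is genuinely absorbing (the play stays there forever once reached), the Ces\`aro average of the stage payoffs along any run that reaches some $s\in S^*$ equals $(\gamma_1^s,\gamma_2^s)$, while along any run that never absorbs all stage payoffs are $0$ and hence the average is $0$. Thus the long-term average payoff of this auxiliary stage-payoff game coincides exactly with $f^R=(f_1^R,f_2^R)$, so an $\ep$-equilibrium of the one is an $\ep$-equilibrium of the other.

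Next I would check the three structural hypotheses of (V.1). Condition (a), that the stage payoffs of both players vanish in nonabsorbing states, holds by the construction just described. Condition (b), that player~2's stage payoff is strictly positive in every absorbing state, holds because $\gamma_2^s\geq 1>0$: by Assumption~\ref{As1} we have $f_2\geq 1$, and $\gamma_2^s$ is obtained as a limit of player~2's $\ep$-equilibrium payoffs for the initial state $s$ (see Assumption~\ref{As2}), whence $\gamma_2^s\geq 1$. This is precisely the \emph{positivity} property noted immediately before the lemma (and it rests on Lemma~\ref{lem47}). Note that no constraint on the sign of player~1's absorbing payoff is needed, so the fact that $\gamma_1^s\leq -1$ causes no difficulty. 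Condition (c), that player~2 has a stationary strategy forcing absorption regardless of the initial state and of player~1's (arbitrary) strategy, follows from the \emph{absorbing} property: by Lemma~\ref{lemma:46}(1), any fully mixed stationary strategy $x_2$ of player~2 makes $(x_1,x_2)$ absorbing against every stationary $x_1$, and since avoiding absorption forever is a reachability objective for player~1 in the Markov decision process obtained by fixing $x_2$, optimality there is attained by a stationary strategy; hence absorption against all stationary opponents already forces absorption against every strategy of player~1.

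Before invoking (V.1), I would dispose of the dummy states of $\Gamma^R$: in each state of $S_{\mathscr{F}_3}$ both players have a single action, so the transition there is predetermined and action-independent. One can therefore eliminate these states by folding their fixed outgoing transitions into the transitions leading into them, producing an equivalent game with state-independent action sets as required by Definition~\ref{def:stochastic:game} and by Vieille's framework; this reduction changes neither the maxmin values nor the set of $\ep$-equilibrium payoffs. With conditions (a)--(c) verified for the reduced game, (V.1) yields an $\ep$-equilibrium of $\Gamma^R$ for every $\ep>0$ and every initial state, which is the assertion of the lemma.

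I do not expect any serious obstacle here, since all the substantive work has already been done: the positivity property is supplied by Assumption~\ref{As1} together with Lemma~\ref{lem47}, and the absorbing property is Lemma~\ref{lemma:46}(1). The only points requiring genuine care are the faithful translation of the recursive payoff of $\Gamma^R$ into the long-term average framework of (V.1) and the clean removal of the dummy states; once these are handled, the lemma is an immediate consequence of Vieille~\cite{vieille2000two}.
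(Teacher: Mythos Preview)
Your proposal is correct and follows essentially the same approach as the paper: verify that $\Gamma^R$ is positive (via $\gamma_2^s\geq 1$ from Assumption~\ref{As1}) and has the absorbing property (via Lemma~\ref{lemma:46}(1), using a fully mixed stationary strategy for player~2), handle the dummy states, and invoke Vieille~\cite{vieille2000two}. The paper records exactly these two properties immediately before the lemma and then states that the result follows from Vieille's main theorem; your write-up simply spells out the translation into the long-term-average framework and the MDP argument for~(c) in more detail.
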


Fix $\ep \in(0,\frac{1}{2})$, and let $\sigma^R$ be an $(\frac{\ep}{5})$-equilibrium for $\Gamma^R$. The next statement is Lemma 48 in Vieille \cite{vieille2000one}. As before, $\theta$ denotes the stage of absorption, with the convention that $\theta=\infty$ if the play is never absorbed.

\begin{lemma}\label{lem48}
The strategy pair $\sigma^R$ leads to absorption in $\Gamma^R$ with probability at least $1-\frac{\ep}{2}$, i.e., $\prob^R_{s, \sigma^R}(\theta<\infty)\geq 1-\frac{\ep}{2}$ for each initial state $s\in S$. 
\end{lemma}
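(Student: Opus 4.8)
The plan is to show that player~2 can profitably deviate from $\sigma^R$ whenever the play fails to be absorbed with high probability, exploiting that her absorbing payoffs are at least~$1$ together with her ability to force absorption. Fix an initial state $s \in S$. If $s \in S^*$ then $\theta = 1$ and the claim is immediate, so I would assume $s \in S_0$ and write $p := \P^R_{s,\sigma^R}(\theta < \infty)$ for the absorption probability; the goal is to prove $p \geq 1 - \frac{\ep}{2}$.

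First I would record that player~2 has a stationary strategy $\hat x_2$ that forces absorption with probability~$1$ against \emph{every} strategy of player~1, not merely every stationary one. By the absorbing property of $\Gamma^R$ noted above, any fully mixed stationary strategy $\hat x_2$ is absorbing against every stationary strategy of player~1; and for $\hat x_2$ fixed, the task of player~1 of maximizing $\P(\theta = \infty)$ is a safety objective in the finite Markov decision problem induced by $\hat x_2$, which admits a pure stationary optimal strategy. Hence the maximum of $\P_{s,\sigma_1,\hat x_2}(\theta=\infty)$ over all strategies $\sigma_1$ equals its maximum over pure stationary $\sigma_1$, which is $0$; thus $(\sigma_1,\hat x_2)$ is absorbing for every $\sigma_1$.

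The heart of the argument is the design of the deviation. For each $T \in \dN$, let $\tau_2^T$ be the strategy of player~2 that follows $\sigma_2^R$ up to stage~$T$ and then switches to $\hat x_2$. Against $\sigma_1^R$, on the event $\{\theta \leq T\}$ the play coincides with that under $\sigma^R$, while on $\{\theta > T\}$ the switch to $\hat x_2$ forces absorption at some state $s'\in S^*$ with $\gamma_2^{s'} \geq 1$ by Assumption~\ref{As1}. Therefore
\begin{equation*}
\E^R_{s,\sigma_1^R,\tau_2^T}[f_2^R] \;\geq\; \E^R_{s,\sigma^R}\big[\mathbf 1_{\{\theta \le T\}} f_2^R\big] + \P^R_{s,\sigma^R}(\theta > T).
\end{equation*}
Letting $T \to \infty$, the first term increases to $\E^R_{s,\sigma^R}[f_2^R]$ (the absorbing payoffs are nonnegative and bounded by $M$) and the second tends to $\P^R_{s,\sigma^R}(\theta = \infty) = 1 - p$.

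Finally I would invoke the equilibrium condition: since $\sigma^R$ is an $(\frac{\ep}{5})$-equilibrium, $\E^R_{s,\sigma^R}[f_2^R] \geq \E^R_{s,\sigma_1^R,\tau_2^T}[f_2^R] - \frac{\ep}{5}$ for every $T$. Combining this with the displayed inequality and passing to the limit in $T$ gives $\E^R_{s,\sigma^R}[f_2^R] \geq \E^R_{s,\sigma^R}[f_2^R] + (1-p) - \frac{\ep}{5}$, hence $1 - p \leq \frac{\ep}{5} \leq \frac{\ep}{2}$, as required. The main obstacle is precisely this deviation design: forcing absorption from the outset only yields $\E^R_{s,\sigma^R}[f_2^R] \geq 1 - \frac{\ep}{5}$, which is too weak to bound $p$; the gain of exactly $(1-p)$ over the equilibrium payoff requires the ``play along, then force'' strategy $\tau_2^T$, so that player~2 retains her equilibrium absorbing payoff on $\{\theta \le T\}$ while still collecting at least~$1$ on the non-absorbed part. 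A secondary point to treat carefully is the justification that $\hat x_2$ forces absorption against arbitrary, possibly history-dependent, strategies of player~1.
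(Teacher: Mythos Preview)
The paper does not supply its own proof of this lemma, instead referring to Lemma~48 in Vieille~\cite{vieille2000one}. Your argument is correct and is precisely the standard one for positive recursive games: since player~2's absorbing payoffs are uniformly at least~$1$ while the non-absorbing payoff is~$0$, any residual probability of non-absorption can be converted into a profitable deviation by following $\sigma_2^R$ for $T$ stages and then forcing absorption, yielding a gain of at least $\prob^R_{s,\sigma^R}(\theta>T)\to 1-p$ over the equilibrium payoff. The two auxiliary points you flag (that a fully mixed $\hat x_2$ forces absorption against \emph{all} strategies of player~1 via the MDP/safety argument, and the monotone-convergence justification for $\E^R_{s,\sigma^R}[\mathbf 1_{\{\theta\le T\}}f_2^R]\to \E^R_{s,\sigma^R}[f_2^R]$ using $f_2^R\ge 0$) are handled correctly.
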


In Section 7 of \cite{vieille2000one}, based on $\sigma^R$, Vieille defines a strategy pair $\sigma^*$ for the original game~$\Gamma$, and shows that $\sigma^*$ is an $\ep$-equilibrium for $\Gamma$. Roughly speaking, the play according to $\sigma^*$ has the following properties, for any initial state $s\in S_0$:
\begin{itemize}
\item 
For any absorbing state $s'\in S^*$, the probability under $\sigma^*$ of absorption in $s'$ is close to the corresponding probability under $\sigma^R$ in the game $\Gamma^R$. 
Consequently, each player $i$'s expected payoff is close to the corresponding payoff under $\sigma^R$ in the game $\Gamma^R$.
\item 
If the play enters a set $E\in\mathscr{F}_3$, then the players switch to a strategy pair that implements the exit distribution $q_E^{\mathscr{F}_1}$ (if $E\in \mathscr{F}_1$) or $q_E^{\mathscr{F}_2}$ (if $E\in \mathscr{F}_2$). If a player is suspected of deviation, the opponent switches to a punishment strategy.
\item Whenever the play is in $S_0 \setminus S_{\mathscr{F}_3}$, the players follow $\sigma^R$. 
\item
Whenever the play reaches an absorbing state $s \in S_*$,
the players forget past play and follow an $\frac{\ep}{5}$-equilibrium for the initial state $s$ that yields payoffs $\frac{\ep}{5}$-close to $(\gamma^1_s,\gamma^2_s)$.
\end{itemize}

In the game $\Gamma$, the play under $\sigma$ may need some time to exit a set $E\in\mathscr{F}_3$, whereas in the game $\Gamma^R$, the sets $E\in\mathscr{F}_3$ are artificially exited by the transition function. 
This creates a difference between histories in $\Gamma$ and $\Gamma^R$. 
Therefore, when the play is outside $S_{\mathscr{F}_3}$, the players should not follow $\sigma^R$ with the actual history in $\Gamma$, 
but rather they should delete the part of the history that was spent on leaving the set $S_{\mathscr{F}_3}$. 

To deal with this problem, Vieille \cite{vieille2000one} defines a mapping that identifies  
histories in $\Gamma$ with histories in $\Gamma^R$, by using a deletion operation. 
We do the same.
Since the payoff (or in Vieille's case, the long-term average payoff) is shift-invariant, this deletion of finitely many stages does not affect the payoffs.

Therefore, Vieille's \cite{vieille2000one} construction and arguments lead to the following statement.

\begin{lemma}
The game $\Gamma$ admits an $10\ep$-equilibrium. 
\end{lemma}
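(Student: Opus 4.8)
The plan is to convert the $(\ep/5)$-equilibrium $\sigma^R$ of the recursive game $\Gamma^R$ into a $10\ep$-equilibrium $\sigma^*$ of $\Gamma$, following the construction in Section~7 of Vieille~\cite{vieille2000one} and exploiting shift-invariance to absorb the error terms. The only structural difference between $\Gamma$ and $\Gamma^R$ is that each $E\in\mathscr{F}_3$ is a block of dummy states in $\Gamma^R$ out of which the transition $\widetilde p$ exits immediately according to $q^{\mathscr{F}_1}_E$ or $q^{\mathscr{F}_2}_E$, whereas in $\Gamma$ the players must spend finitely many stages inside $E$ before leaving it through the corresponding exit. First I would fix, for each $E\in\mathscr{F}_3$, an explicit pair of perturbed stationary strategies that, under the true transition $p$, keeps the play in $E$ and eventually leaves $E$ with exit distribution $q_E$: for $E\in\mathscr{F}_1$ this uses that $E$ is $Y$-controlled by player~2 or $Y$-jointly controlled (Lemma~\ref{lemma38}), and for $E\in\mathscr{F}_2$ that $E$ is $Y^E$-controlled by player~1 (Lemma~\ref{lemma:44}); in both cases the exit gives each player~$k$ a continuation maxmin value at least $H_k(E,\cdot)\geq v_k(E)$ by Eqs.~\eqref{exp-q} and~\eqref{exp-q2}.

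Next I would define the deletion map identifying a history of $\Gamma$ with a history of $\Gamma^R$ by erasing the stages spent inside sets of $\mathscr{F}_3$, and define $\sigma^*$ by: following $\sigma^R$ (read through the deletion map) while the play is in $S_0\setminus S_{\mathscr{F}_3}$; upon entering a set $E\in\mathscr{F}_3$, switching to the exit strategies above until the play leaves $E$; and, upon reaching an absorbing state $s\in S^*$, forgetting the past and playing an $(\ep/5)$-equilibrium for initial state $s$ whose payoff is $(\ep/5)$-close to $(\gamma^s_1,\gamma^s_2)$, which exists by Assumption~\ref{As2} and shift-invariance. Because each payoff function is shift-invariant, deleting the finitely many stages spent inside $\mathscr{F}_3$-sets does not change the realized payoff, so along every run the payoff under $\sigma^*$ in $\Gamma$ equals the payoff of the image run under $\sigma^R$ in $\Gamma^R$. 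Combining this identity with Lemma~\ref{lem48} (absorption under $\sigma^R$ occurs with probability at least $1-\ep/2$) and the bound $M$ on payoffs off the absorbing event, I would show that each player $i$'s expected payoff under $\sigma^*$ is within a few multiples of $\ep$ of her $\Gamma^R$-payoff under $\sigma^R$.

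The deviation analysis splits according to where the deviation first takes effect. A deviation occurring while the play is outside $S_{\mathscr{F}_3}$ is deterred by $\sigma^R$ being an $(\ep/5)$-equilibrium of $\Gamma^R$, once we note that a deviator can be punished down to her maxmin value in $\Gamma$, which is at most her maxmin value $v_i^R$ in $\Gamma^R$ by Lemma~\ref{lem47} (and coincides with the minmax value since there are two players). A deviation inside some $E\in\mathscr{F}_3$ cannot be profitable because $E$ is blocked to the deviating player, so any exit she can force yields her at most her best maxmin value in $E$ up to a vanishing error, and any attempt to remain in $E$ while improving her payoff, or to shift the exit distribution, is detected and punished exactly as in Step~3 of the proof of Lemma~\ref{lemma:alternatives}; a deviation after absorption is handled by the $(\ep/5)$-equilibrium played in the absorbing state. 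Tallying the error terms — $\ep/5$ from $\sigma^R$, $\ep/2$ from the absorption probability of Lemma~\ref{lem48}, $\ep/5$ from the absorbing-state equilibria, and the detection errors from the exit phases — and absorbing them into the constant yields the claimed $10\ep$-equilibrium.

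The main obstacle will be the bookkeeping of the exit phases inside $\mathscr{F}_3$-sets together with their deviation detection: one must guarantee both that the implemented exit distribution is indeed $q_E$, so that the induced absorption probabilities, and hence the expected payoffs, match those of $\sigma^R$ up to the allotted error, and that any deviation a player can profitably attempt during such a phase is caught and punished, all while the deletion map keeps the two games synchronized. For jointly controlled sets $E\in\mathscr{F}_1$ this is the most delicate point, since implementing a distribution over joint exits requires coordinated randomization whose honest execution must itself be monitored; this is precisely where the inner-regularity and L\'evy zero-one law machinery developed in the proof of Lemma~\ref{lemma:alternatives}, rather than the long-run-average statistical tests of Vieille, is what makes the detection go through for general shift-invariant payoffs.
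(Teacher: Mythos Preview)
Your proposal is correct and follows essentially the same construction the paper adopts from Section~7 of Vieille~\cite{vieille2000one}: translate $\sigma^R$ through the deletion map, implement the exit distributions $q_E$ inside each $E\in\mathscr{F}_3$, play an $(\ep/5)$-equilibrium upon absorption, and punish detected deviators at their $\Gamma$-maxmin level (which by Lemma~\ref{lem47} is no higher than in $\Gamma^R$). The paper itself gives only this sketch and explicitly defers the details to Vieille.

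One point worth flagging: your final paragraph overstates where the new machinery is needed. The inner-regularity and L\'evy zero-one law arguments from Step~3 of Lemma~\ref{lemma:alternatives} were required there because the play was to remain in $C$ \emph{forever}, so deviations could affect the general tail payoff in ways invisible to any finite-horizon test. During an exit phase from a set $E\in\mathscr{F}_3$, by contrast, the play is supposed to leave $E$ in finite expected time and the only payoff-relevant datum is the continuation maxmin value at the exit state; hence the detection needed is only that the correct exit distribution $q_E$ is being implemented, which Vieille's original jointly-controlled-lottery and frequency arguments already handle independently of the payoff function. The paper accordingly says that ``Vieille's construction and arguments'' carry over directly for this part, without invoking the new tests.
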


Since $\ep\in(0,\frac{1}{2})$ was arbitrary, this is in contradiction with
Assumption \ref{As3}. Hence, the proof of Theorem \ref{theorem:1} is complete.\quad\scriptsize$\blacksquare$\normalsize

\section{Discussion}\label{sec-discuss}

\noindent\textbf{Finiteness of the set of states.} Our proof relies on the assumption that the set of states $S$ is finite. Indeed, we closely follow the main lines of the proof of Vieille \cite{vieille2000one,vieille2000two}, and his proof makes strong use of this assumption.
Also, the crucial Lemma~\ref{lemma:alternatives} uses this assumption at several places; 
for example, in Step 0 to obtain a uniform bound of loss $\eta^\rho$ for exits, and in Step 3 to find a state $s^{**}$ satisfying certain properties. 

For a countably infinite set of states $S=\dN$, additional difficulties arise. Even for a fixed pair of stationary strategies, the induced Markov chain on the set of states $\dN$ can generally no longer be classified into transient states and ergodic sets of states. In particular, it would not be sufficient to work with communicating sets as defined in Definition \ref{definition:communicating}. 
It is not known if Vieille's result \cite{vieille2000one,vieille2000two}  can be extended to  
stochastic games with a countable set of states,
and a fortiori
the same holds for our result.\medskip

\noindent\textbf{Finiteness of the action sets.} We assumed that the action sets are finite. The reason is that even one-shot games with countably infinite action sets may fail to have an $\ep$-equilibrium. 
A well-known example is the zero-sum game with action sets $A_1=A_2=\dN$ in which player 1 wins when her action is larger than player 2's action,
and loses otherwise.
\medskip

\noindent\textbf{Boundedness and Borel-measurability of the payoffs.} 
These two assumptions are standard. They are used to guarantee that each strategy pair induces a well-defined expected payoff, and they are needed for the application of the results of Martin \cite{martin1998determinacy} 
or
 Maitra and Sudderth \cite{maitra1998finitely}.\medskip

\noindent\textbf{Shift-invariance of the payoffs.} 
In the main result, Theorem \ref{theorem:1}, we assumed that the payoff functions are shift-invariant. 
This assumption was used at various places: 
\begin{itemize}
\item It ensured that the maxmin values only depend on the state, cf.~Lemma \ref{const-surr}.
\item Without this assumption, whether an $\ep$-equilibrium exists in the subgame that starts at a certain state
may depend on past play, 
so the proper set of states is the set of finite histories, which is countably infinite.
In addition, in Step 3 of the proof of Lemma \ref{lemma:alternatives} we used the finiteness of the set of states 
to find a state $s^{**}$ satisfying certain properties.
\end{itemize}

We now briefly discuss a condition that is slightly weaker than shift-invariance. A payoff function $f_i$ is called \emph{tail-measurable} if whenever two runs $r = (s^1,a^1,s^2,a^2,\ldots) \in \mathscr{R}$
and $r' = (s'^1,a'^1,s'^2,a'^2,\ldots)\in \mathscr{R}$ satisfy $s^t = s'^t$ and $a^t = a'^t$ for every $t$ sufficiently large, we have $f_i(r)=f_i(r')$. Intuitively,
$f_i$ is tail-measurable if the payoff remains the same when we change finitely many coordinates of the run. 

Every shift-invariant payoff function is also tail-measurable. Indeed, suppose that $f_i$ is shift-invariant, and consider two runs $r = (s^1,a^1,s^2,a^2,\ldots)$
and $r' = (s'^1,a'^1,s'^2,a'^2,\ldots)$ satisfying $s^t = s'^t$ and $a^t = a'^t$ for every $t\geq T$, for some $T\in\dN$. Then, by shift-invariance, $f(r)=f(s^T,a^T,s^{T+1},a^{T+1},\ldots)=f(r')$, as desired.

The opposite is however not true: there are tail-measurable but not shift-invariant payoff functions.
For example, consider a repeated game 
(that is, a stochastic game with a single state)
with a single player who has two actions: $A_1 = \{a,b\}$.
Suppose that
\begin{itemize}
    \item $f_1(r) = 1$ if there is $t_0 \in \dN$ such that $a_1^{2t}=a$ and $a_1^{2t+1} = b$ for every $t \geq t_0$.
    \item $f_1(r) = 0$ otherwise.
\end{itemize}
Then $f_1$ is tail-measurable but not shift-invariant.

As the following example shows, tail-measurable payoffs pose difficulty for our proof technique:
in such a case, the maxmin value of a player in a subgame does not only depend on the current state, but also on the current stage. 
Therefore, studying the slightly more general tail-measurable payoffs leads us to stochastic games with countably many states, and entails additional technical difficulties.\medskip

\begin{example}\rm 
\label{example:1}
Consider a game with three states $S=\{1,2,3\}$, with state 1 being the initial state. Player 1 has two actions $T$ and $B$
in state 1 and only one action $B$ in states 2 and 3, while player 2 has only one action in each state (i.e., player 2 is dummy). 
In state 1, the action $T$ of player 1 keeps the play in state 1 whereas the action $B$ leads to state 2 with probability 1. From state 2 
the play moves to state 3 with probability 1, and from state 3 the play moves to state 2 with probability 1. 
The transition function appears graphically in Figure~\arabic{figurecounter}.
The payoff of player 1 is equal to 1 if state 2 is reached at an even stage, and it is equal to 0 otherwise.\vspace{0.2cm}\it

\centerline{
\begin{picture}(270,60)(-10,-15)
\put(0,00){\numbercellonga{1}}
\put(0,20){\numbercellonga{2}}
\put(-10,8){$B$}
\put(-10,28){$T$}
\put(5,-15){State 1}
\put(100,0){\numbercellonga{3}}
\put(200,0){\numbercellonga{2}}
\put(90,8){$B$}
\put(190,8){$B$}
\put(105,-15){State 2}
\put(205,-15){State 3}
\end{picture}}
\smallskip
\centerline{{Figure $\arabic{figurecounter}$: The transition function in the game in Example \ref{example:1}.}}
\medskip

\rm Player 1's payoff function is tail-measurable,
since it is determined by the parity of the stage in which we leave state 1. 
Since player~2 is a dummy player, the transitions are deterministic,
and no two actions lead to the same transition,
we can identify a run with the sequence of states.
Note that the sequence of states $(1,1,3,2,3,2,3,2,3,...)$ is not a run, as it does not respect the transitions in the game.

Player 1's payoff function is not shift-invariant though, as 
$f_1(1,1,2,3,2,3,2,3,...) = 0$ while $f_1(1,2,3,2,3,2,3,\dots) = 1$. 

The maxmin value of player 1 does not only depend on the current state, but also on the current stage. For example, if $h$ is a history with final state $s_h=2$, then player 1's maxmin value in the subgame that starts at $h$ is equal to 1 if $\len(h)$ is even, and to 0 if $\len(h)$ is odd. 
\end{example}

\noindent\textbf{More than two players.} The existence of $\ep$-equilibrium for stochastic games with more than two-players is an open problem, and it is already challenging in games with a simple structure 
(e.g., in quitting games, see Solan and Vieille \cite{solan2001quitting} or Simon \cite{simon2012topological}). 

Let us provide some more details in terms of our proof techniques. For two-player stochastic games with the long-term average payoff, 
Vieille \cite{vieille2000one} uses auxiliary zero-sum games,
and relies on Mertens and Neyman \cite{mertens1981stochastic} for the existence and structure of $\ep$-optimal strategies in
these zero-sum games. For bounded and Borel-measurable payoffs, Martin \cite{martin1998determinacy} or Maitra and Sudderth \cite{maitra1998finitely} provide an analog to Mertens and Neyman \cite{mertens1981stochastic}, and this is an important building block in our proof.

For more than two players, however, the maxmin value and the minmax value of a player are generally different (i.e., the last inequality in \Eqref{def-maxmin} fails). As a consequence, the results of Mertens and Neyman \cite{mertens1981stochastic}, Martin \cite{martin1998determinacy}, and Maitra and Sudderth \cite{maitra1998finitely}, only take a weaker form, cf.~Neyman \cite{neyman2003stochastic} and Ashkenazi-Golan, Flesch, Predtetchinski, and Solan \cite{ashkenazigolan2021regularity}. In particular, in the auxiliary zero-sum games where one player maximizes her own payoff and the opponents minimize the same (without correlation), one cannot take $\delta$-optimal strategies as we do in Section \ref{section:martin}.

In certain classes of games with more than two players and the long-term average payoff, $\ep$-equilibria have been established by using limit properties of discounted equilibria as the discount factor vanishes, as in Solan \cite{solan1999three}, which rely on the existence of discounted equilibria by Fink \cite{fink1964equilibrium} or Takahashi \cite{takahashi1964equilibrium}. However, at the moment, Fink \cite{fink1964equilibrium} and Takahashi \cite{takahashi1964equilibrium} have no analog for bounded and Borel-measurable payoffs. Perhaps a suitable approximation of the bounded and Borel-measurable payoffs, as in  Ashkenazi-Golan, Flesch, Predtetchinski, and Solan \cite{ashkenazigolan2021regularity}, can pave the way forward.\medskip


\noindent\textbf{The choice of the subgame-perfect $\delta$-maxmin strategies.} In Section \ref{section:martin}, we constructed a subgame-perfect $\delta$-maxmin strategy for each player through the functions $D^\delta_i$ given by Lemma \ref{theorem:martin}. 
This very specific construction was used in Step 2 of the proof of Lemma \ref{lemma:alternatives} to ensure that the strategy pair $(\pi^{\rho,\delta}_1,\pi^{\rho,\delta}_2)$, which was derived from the subgame-perfect $\delta$-maxmin strategies, still yields high expected payoffs to the players. 
We do not know whether the same would remain true for $(\pi^{\rho,\delta}_1,\pi^{\rho,\delta}_2)$ if one defines them using arbitrary subgame-perfect $\delta$-maxmin strategies for the players. \medskip 

\noindent\textbf{Subgame-perfect $\ep$-equilibrium.} 
The main result of the paper, Theorem \ref{theorem:1}, cannot be strengthened to the existence of subgame-perfect $\ep$-equilibrium. Indeed, Flesch, Kuipers, Mashiah-Yaakovi, Schoenmakers, Shmaya, Solan, and Vrieze \cite{flesch2014non} presented a two-player stochastic game with two states and bounded, Borel-measurable, and shift-invariant payoffs that admits no subgame-perfect $\ep$-equilibrium for small $\ep>0$.\medskip

\printbibliography
\end{document}